\documentclass[a4paper,11pt]{elsarticle}
\usepackage[T1]{fontenc}
\usepackage[utf8]{inputenc}
\usepackage{lmodern,amsmath,amssymb,amsthm,bm}
\usepackage{graphicx,multirow,booktabs,array,microtype,xcolor}
\usepackage[a4paper,margin=23mm]{geometry}
\usepackage[colorlinks=true,linkcolor=blue,citecolor=blue,urlcolor=blue,
pdftitle={Reynolds-Semi-Robust Error Estimates for a Globally Divergence-Free HDG Method for the Smagorinsky Model},
pdfauthor={Shuaijun Liu and Xiaoping Xie}]{hyperref}
\usepackage[capitalise,noabbrev]{cleveref}
\allowdisplaybreaks[2]
\newcommand{\w}{\bm w}
\renewcommand{\u}{\bm u}
\renewcommand{\v}{\bm v}
\newcommand{\x}{\bm x}
\newcommand{\V}{\bm V}

\newcommand{\et}{\bm\eta}
\newcommand{\Th}{\mathcal T_h}
\newcommand{\Fh}{\mathcal F_h}
\newcommand{\n}{\bm n}
\newcommand{\grad}{\nabla_h}

\newcommand{\jn}[1]{j(#1)}
\newcommand{\nnorm}[1]{\left|\!\left|\!\left|#1\right|\!\right|\!\right|}

\newcommand{\Amap}{\mathcal A}
\newcommand{\dt}{d_\tau}
\newcommand{\mus}{\mu_s}
\newcommand{\divop}{\operatorname{div}}
\newcommand{\R}{\mathbb R}

\newtheorem{mytheorem}{Theorem}[section]
\newtheorem{mylemma}[mytheorem]{Lemma}
\newtheorem{myprop}[mytheorem]{Proposition}
\newtheorem{mycorollary}[mytheorem]{Corollary}
\newtheorem{myassumption}[mytheorem]{Assumption}
\theoremstyle{remark}
\newtheorem{myremark}[mytheorem]{Remark}
\numberwithin{equation}{section}

\newif\ifauthorchecks
\authorcheckstrue

\newcommand{\PaperGraphic}[3][]{%
  \IfFileExists{#3}{\includegraphics[#1]{#3}}{%
  \fbox{\parbox[c][0.57\dimexpr#2\relax][c]{\dimexpr#2-2\fboxsep-2\fboxrule\relax}{%
  \centering\scriptsize\textbf{Figure not supplied}\par\smallskip
  \texttt{\detokenize{#3}}}}}}

\begin{document}
\begin{frontmatter}
\title{A Reynolds-Semi-Robust, Globally Divergence-Free HDG Method for the Smagorinsky Model\tnoteref{t1}}
\tnotetext[t1]{This work is supported by the National Natural Science Foundation of China (Grant No.\ 12571434) and the Open Research Project of the National Key Laboratory of Fundamental Algorithms and Models for Engineering Simulation.}
\author{Shuaijun Liu}
\ead{sj\_liu123@163.com}
\author{Xiaoping Xie\corref{cor1}}
\ead{xpxie@scu.edu.cn}
\cortext[cor1]{Corresponding author.}
\address{School of Mathematics, Sichuan University, Chengdu 610064, China\\
National Key Laboratory of Fundamental Algorithms and Models for Engineering Simulation, Sichuan University, Chengdu 610207, China}

\begin{abstract}
We develop and analyze a fully discrete, globally divergence-free hybridizable discontinuous Galerkin (HDG) method for a gradient-based Smagorinsky model.  The method combines backward Euler time stepping, interior-penalty discretizations of molecular and nonlinear eddy diffusion, and an upwind convective flux.  The discrete velocity is $H(\operatorname{div})$-conforming and pointwise divergence-free, which yields pressure robustness.  For sufficiently large penalty parameters, we prove unconditional energy stability and existence of a discrete solution, and establish uniqueness under additional smallness conditions. 
A velocity error estimate is derived without explicit inverse powers of the molecular viscosity. 
The nonlinear facet residuals are controlled using local trace-approximation estimates and a viscosity-independent facet penalty. 
We retain the dependence of the discrete Gronwall factor on the filter scale and the mesh size; a mesh-uniform bound follows under suitable solution regularity, a fixed time-step margin, and the scaling $\delta=O(h)$ on quasi-uniform meshes. 
The reported manufactured-solution results are consistent with the resulting pre-asymptotic error bounds. Further flow examples illustrate the dissipative behavior of the method and are distinguished from the boundary conditions and parameter range covered by the analysis.

\end{abstract}
\begin{keyword}
Smagorinsky model \sep hybridizable discontinuous Galerkin method \sep globally divergence-free discretization \sep pressure robustness \sep Reynolds-semi-robust error estimates
\end{keyword}
\end{frontmatter}

\section{Introduction}
Let $\Omega\subset\R^d$, $d\in\{2,3\}$, be a bounded, connected polygonal or polyhedral domain with Lipschitz boundary $\Gamma=\partial\Omega$, and let $T>0$. We consider the incompressible evolution problem
\begin{equation}\label{Smagorinsky_eq}
\left\{\begin{aligned}
\partial_t\w+(\w\cdot\nabla)\w-\nu\Delta\w+\nabla r
 -\nabla\cdot\bigl(\mus|\nabla\w|\nabla\w\bigr)&=\bm f
 &&\text{in }\Omega\times(0,T],\\
\nabla\cdot\w&=0&&\text{in }\Omega\times(0,T],\\
\w&=\bm0&&\text{on }\Gamma\times(0,T],\\
\w(\cdot,0)&=\w_0&&\text{in }\Omega,
\end{aligned}\right.
\end{equation}
where $\w$ and $r$ denote velocity and pressure, $\nu>0$ is the molecular kinematic viscosity, and $\bm f$ is the body force. Throughout the analysis,
\begin{equation}\label{nuT}
\mus:=(C_s\delta)^2,\qquad \nu_T(\w)=\mus|\nabla\w|,
\end{equation}
where $C_s\in[0.1,0.2]$ is the Smagorinsky constant, and $\delta$ denotes a characteristic filter or grid scale. The norm of a matrix is its Frobenius norm. The choice $\mus=0$ recovers the incompressible Navier--Stokes equations. The Reynolds number is $\mathrm{Re}=UL/\nu$, where $U$ and $L$ are characteristic velocity and length scales.

Equation~\eqref{Smagorinsky_eq} uses the full velocity gradient. It is a gradient-based Smagorinsky/Ladyzhenskaya-type model \cite{Du,John,Smagorinsky1963,Smagorinsky1993}. It should be distinguished from the strain-based closure formulated with $D(\w)=(\nabla\w+\nabla\w^{\mathsf T})/2$. These two nonlinear operators are not identical. The discrete formulation and the analysis below apply to the model as written in~\eqref{Smagorinsky_eq}; an extension to a strain-based closure would require a separate treatment of its stress and discrete Korn inequalities.

Accurate simulation of incompressible flows at high Reynolds numbers remains a central challenge in computational fluid dynamics. Resolving all dynamically relevant scales by direct numerical simulation can be prohibitively expensive, whereas large-eddy simulation (LES) resolves larger flow structures and models the effect of unresolved scales \cite{Guermond2004,Lesieur2005,Moin1998,Pope2000,Rebollo2014,Sagaut2006}. Smagorinsky-type closures are widely studied because of their comparatively simple, nonlinear dissipative structure \cite{Lilly,Smagorinsky1963}. Numerous modifications have been developed to address their limitations in different flow regimes \cite{Deardorff1970,Germano1991,Lilly1992,chorfi2020analysis,Huang2024}.

Mathematical analyses of related nonlinear-viscosity models include the work of Du and Gunzburger \cite{Du} and Par\'es \cite{Pares1992}. John and Layton \cite{John} analyzed numerical errors in LES. Burman, Hansbo, and Larson \cite{Burman} studied stability under scale separation and derived pre-asymptotic error estimates for stabilized divergence-free approximations. A posteriori analysis of an implicit-Euler finite element discretization is available in \cite{Nassreddine2023}. Stationary approximations and nonlinear solvers have also been investigated; see, for example, \cite{du1990finite,Borggaard2008,Su2014,Shi2019,Yang2023,Zhang2025}.

Two distinct structural requirements motivate the present method. First, convection-dominated flows require appropriate stabilization. Second, irrotational forces should be balanced by pressure without polluting the velocity approximation. The latter property, referred to as pressure robustness, is obtained by testing the discrete momentum equation in a genuinely divergence-free subspace \cite{JLMNR2017,LM2016}. For discontinuous velocities, elementwise vanishing divergence alone is insufficient: normal continuity and the appropriate boundary condition are also needed. Globally divergence-free HDG and related methods provide this structure \cite{CFX2016,CS2014,LS2016,Rhebergen2018-3,Chen2024}. Reynolds-semi-robust estimates for incompressible flow discretizations have been developed in several settings \cite{Han2021,Han2022,Han2023,Beirao2025,Quiroz2025}.

The HDG framework combines discontinuous element fields with facet unknowns and permits local elimination of suitable element variables \cite{Cockburn2009}. For incompressible flow formulations, representative contributions include \cite{CNP2010,NPC2010,NPC2011,QS2016,FJQ2019}. Here, element and facet velocities have degree $k$, while element and facet pressures have degrees $k-1$ and $k$, respectively, with $k\ge1$. The pressure coupling enforces a globally divergence-free element velocity. Molecular diffusion is discretized by a symmetric interior-penalty form, convection by an upwind flux, and nonlinear eddy diffusion by a nonlinear interior-penalty form. An additional, viscosity-independent quadratic facet penalty is retained explicitly in the analysis.

The main analytical difficulty is the difference of nonlinear facet fluxes. Volume monotonicity of $G\mapsto |G|G$ does not imply monotonicity of the complete nonlinear HDG operator. We therefore keep the monotone volume and jump terms on the left-hand side and estimate the remaining facet terms separately. In particular, traces of interpolation errors are bounded by trace-approximation estimates, not by polynomial inverse inequalities.

For quasi-uniform meshes, sufficiently regular solutions, and a time-step restriction specified in Section~\ref{Sec:err}, the resulting nodal velocity estimate has the form
\begin{equation}\label{intro:rate}
\max_{0\le n\le N}\|\w(t_n)-\w_h^n\|_{L^2(\Omega)}
\le C\mathcal G_N^{1/2}\Bigl(
\tau+\nu^{1/2}h^k+h^{k+1/2}
+C_s\delta\bigl(h^{3k/2}+h^{3k/4}\bigr)\Bigr).
\end{equation}
Here $\mathcal G_N$ is an explicit discrete Gronwall factor. Neither its exponent nor the error prefactor involves an explicit inverse power of $\nu$. They do depend on norms of the exact velocity, which need not be uniform as $\nu$ or $\delta$ varies. 
Under $\delta=O(h)$, uniformly bounded solution norms, and a fixed margin in the time-step restriction, $\mathcal G_N$ is uniformly bounded. In the convection-dominated regime $\nu\lesssim h$, this gives the pre-asymptotic bound
\[
O\!\left(\tau+h^{k+1/2}+h^{3k/2+1}+h^{3k/4+1}\right).
\]
The estimate therefore characterizes the pre-asymptotic behavior of the method in high-Reynolds-number regimes, while the exact discrete incompressibility ensures pressure robustness.

Section~\ref{Sec:Pre} introduces the continuous formulation and a modelling-error estimate. Section~\ref{Sec:num} defines the HDG method and proves its structural and stability properties. Section~\ref{Sec:err} develops the velocity error analysis. Numerical results and their scope are discussed in Section~\ref{Sec:exp}.

\section{Notation and continuous formulation}\label{Sec:Pre}
We use the standard Sobolev spaces $W^{\ell,p}(D)$, with norms $\|\cdot\|_{W^{\ell,p}(D)}$ and seminorms $|\cdot|_{W^{\ell,p}(D)}$, and write $H^\ell(D)=W^{\ell,2}(D)$. Vector and matrix norms are understood componentwise. The symbols $(\cdot,\cdot)_D$ and $\langle\cdot,\cdot\rangle_{\partial D}$ denote volume and boundary pairings; matrix products in these pairings use the Frobenius inner product. We abbreviate $(\cdot,\cdot)_\Omega$ by $(\cdot,\cdot)$ and use standard Bochner-space notation.

The constant $C$ may change from line to line. Unless stated otherwise, it may depend on the domain, the polynomial degree, mesh shape regularity, fixed penalty parameters, and displayed solution norms, but not explicitly on $h$, $\tau$, $\delta$, or inverse powers of $\nu$. Dependence on the filter scale and mesh size through a Gronwall factor is always displayed. An assertion of parameter-uniform convergence additionally requires parameter-uniform bounds on the relevant solution norms.

Set
\[
\V=[W^{1,3}_0(\Omega)]^d,\qquad
\V_{\divop0}=\{\v\in\V:\nabla\cdot\v=0\},\qquad Q=L^2_0(\Omega).
\]
For a sufficiently regular solution, the weak formulation of~\eqref{Smagorinsky_eq} is
\begin{equation}\label{weak_eq}
\begin{aligned}
(\partial_t\w,\v)+\nu(\nabla\w,\nabla\v)
+((\w\cdot\nabla)\w,\v)
+\mus(|\nabla\w|\nabla\w,\nabla\v)
-(r,\nabla\cdot\v)&=(\bm f,\v),\\
(q,\nabla\cdot\w)&=0
\end{aligned}
\end{equation}
for all $(\v,q)\in\V\times Q$, with $\w(0)=\w_0$. The natural pressure integrability for a general $W^{1,3}$ weak formulation is $L^{3/2}$; the choice $Q=L^2_0$ here specifies the more regular pressure class used in the subsequent consistency analysis. We do not infer the regularity required below from minimal weak-solution data. For continuous existence and regularity theory, see \cite{Du,Pares1992,John2016}.

Testing~\eqref{weak_eq} with $(\v,q)=(\w,r)$ gives, whenever the solution is regular enough to justify the identity,
\begin{equation}\label{energy:continuous}
\frac12\frac{d}{dt}\|\w\|_{L^2(\Omega)}^2
+\nu\|\nabla\w\|_{L^2(\Omega)}^2
+\mus\|\nabla\w\|_{L^3(\Omega)}^3=(\bm f,\w).
\end{equation}
The convective term vanishes because $\nabla\cdot\w=0$ and the velocity has zero boundary trace. The last term is nonnegative model dissipation; this particular closure does not describe backscatter. For weak solutions, the corresponding energy inequality is the appropriate statement unless additional regularity establishes equality.

\begin{mylemma}[Monotonicity and continuity]\label{Lemm_1}
For vectors or matrices $G,H$, define $\Amap(G)=|G|G$. Then
\begin{align}
(\Amap(G)-\Amap(H)):(G-H)&\ge\tfrac14|G-H|^3,\label{Mono}\\
|\Amap(G)-\Amap(H)|&\le(|G|+|H|)|G-H|.\label{Conti}
\end{align}
For vectors, the colon in~\eqref{Mono} means the Euclidean inner product. Consequently, for $\u,\w,\v\in[W^{1,3}(\Omega)]^d$,
\[
\bigl|(\Amap(\nabla\u)-\Amap(\nabla\w),\nabla\v)\bigr|
\le(\|\nabla\u\|_{L^3}+\|\nabla\w\|_{L^3})
\|\nabla(\u-\w)\|_{L^3}\|\nabla\v\|_{L^3}.
\]
\end{mylemma}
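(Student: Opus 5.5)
We will prove the two pointwise inequalities in any finite-dimensional inner-product space (Euclidean for vectors, Frobenius for matrices). The integral estimate then follows from \eqref{Conti} and Hölder's inequality.

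\emph{Continuity \eqref{Conti}.} We write
\[
\Amap(G)-\Amap(H)=|G|(G-H)+(|G|-|H|)H .
\]
By the reverse triangle inequality, $\bigl||G|-|H|\bigr|\le|G-H|$. Hence
\[
|\Amap(G)-\Amap(H)|\le|G||G-H|+|H||G-H|,
\]
which is \eqref{Conti}.

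\emph{Monotonicity \eqref{Mono}.} We expand the left-hand side:
\[
(\Amap(G)-\Amap(H)):(G-H)=|G|^3+|H|^3-(|G|+|H|)\,G:H .
\]
We substitute $G:H=\tfrac12(|G|^2+|H|^2-|G-H|^2)$ and set $a=|G|$, $b=|H|$, $c=|G-H|$. The left-hand side becomes
\[
a^3+b^3-\tfrac12(a+b)(a^2+b^2)+\tfrac12(a+b)c^2
=\tfrac12(a+b)(a-b)^2+\tfrac12(a+b)c^2 .
\]
For the first term we use the identity $a^3+b^3-\tfrac12(a+b)(a^2+b^2)=\tfrac12(a+b)(a^2-2ab+b^2)$. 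Since $(a-b)^2\ge0$, the left-hand side is at least $\tfrac12(a+b)c^2$. The triangle inequality gives $c\le a+b$, so the left-hand side is at least $\tfrac12c^3\ge\tfrac14 c^3$. This proves \eqref{Mono}, in fact with the better constant $\tfrac12$.

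The only step needing care is this algebraic reduction to $a,b,c$. The key choice is to use the polarization identity rather than Cauchy--Schwarz, because Cauchy--Schwarz would give only $(a+b)(a-b)^2/2$ and lose the $c$-dependence.

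\emph{Integral estimate.} Pointwise in $\Omega$, \eqref{Conti} and Cauchy--Schwarz give
\[
|(\Amap(\nabla\u)-\Amap(\nabla\w)):\nabla\v|\le(|\nabla\u|+|\nabla\w|)\,|\nabla(\u-\w)|\,|\nabla\v| .
\]
We integrate over $\Omega$ and apply the generalized Hölder inequality with exponents $(3,3,3)$. The Minkowski inequality $\||\nabla\u|+|\nabla\w|\|_{L^3}\le\|\nabla\u\|_{L^3}+\|\nabla\w\|_{L^3}$ then yields the stated bound. All integrals are finite because $\u,\w,\v\in[W^{1,3}(\Omega)]^d$.
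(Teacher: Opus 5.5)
Your proof is correct and is essentially the paper's argument: you use the same splitting $|G|(G-H)+(|G|-|H|)H$ with the reverse triangle inequality. You reach the same identity $(\Amap(G)-\Amap(H)):(G-H)=\tfrac{a+b}{2}\bigl(|G-H|^2+(a-b)^2\bigr)$, then use $a+b\ge|G-H|$ to get the constant $\tfrac12$, which the paper also obtains before weakening to $\tfrac14$, and you close with H\"older with exponents $(3,3,3)$. One harmless slip is in your side remark: bounding $G:H\le ab$ by Cauchy--Schwarz gives $(a+b)(a-b)^2$, not half of it, but this plays no role in the proof.
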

\begin{proof}
Set $a=|G|$ and $b=|H|$. A direct expansion gives
\[
(\Amap(G)-\Amap(H)):(G-H)
=\tfrac{a+b}{2}\left(|G-H|^2+(a-b)^2\right)
\ge\tfrac12|G-H|^3.
\]
The last step uses $a+b\ge|G-H|$, and implies the conservative bound~\eqref{Mono}. Also,
$\Amap(G)-\Amap(H)=a(G-H)+(a-b)H$, so the reverse triangle inequality proves~\eqref{Conti}. The integrated estimate follows by H\"older's inequality with exponents $(3,3,3)$. These are the standard cubic-growth inequalities used, for example, in~\cite{Du,John}; the weaker constant $1/4$ is retained uniformly below.
\end{proof}

\subsection{Modelling error relative to Navier--Stokes}
Let $\u$ solve the Navier--Stokes problem obtained by setting $\mus=0$, with the same forcing, initial value, and boundary data as $\w$. The following comparison is conditional on the stated regularity of $\u$; it is not a global smooth-solution assertion for three-dimensional Navier--Stokes flow.

\begin{mylemma}\label{lemma::pertur_err}
Let $\et=\u-\w$, with $\et(0)=0$, and suppose the two solutions are sufficiently regular to justify testing their difference with $\et$. Set
\[
R_u=2\int_0^T\|\nabla\u(t)\|_{L^\infty(\Omega)}\,dt<\infty.
\]
If $\nabla\u\in L^3(0,T;L^3(\Omega))$, then
\begin{equation}\label{model_er1}
\begin{aligned}
&\|\et\|_{L^\infty(0,T;L^2)}^2
+\int_0^T\left(2\nu\|\nabla\et\|_{L^2}^2+	\frac{\mus}{4}\|\nabla\et\|_{L^3}^3\right)dt\\
&\qquad\le \frac{16\sqrt2}{3\sqrt3}\,\mus e^{R_u}
\int_0^T\|\nabla\u\|_{L^3}^3\,dt.
\end{aligned}
\end{equation}
If, in addition, $\nabla\cdot\Amap(\nabla\u)\in L^2(0,T;[L^2(\Omega)]^d)$, then
\begin{equation}\label{model_er2}
\begin{aligned}
&\|\et\|_{L^\infty(0,T;L^2)}^2
+\int_0^T\left(2\nu\|\nabla\et\|_{L^2}^2+	\frac{\mus}{2}\|\nabla\et\|_{L^3}^3\right)dt\\
&\qquad\le 2\mus^2 e^{R_u+T}
\int_0^T\|\nabla\cdot\Amap(\nabla\u)\|_{L^2}^2\,dt.
\end{aligned}
\end{equation}
\end{mylemma}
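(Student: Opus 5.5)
We subtract the two weak formulations \eqref{weak_eq}: the Navier--Stokes one for $\u$ (with $\mus=0$) and the Smagorinsky one for $\w$. We then test the difference with $\v=\et$. Since $\nabla\cdot\u=\nabla\cdot\w=0$, both pressures drop out. Since $\et$ has zero trace, every boundary term disappears. The Smagorinsky term enters as $-\mus(\Amap(\nabla\w),\nabla\et)$, and we split it as
\[
-\mus(\Amap(\nabla\w),\nabla\et)=\mus(\Amap(\nabla\u)-\Amap(\nabla\w),\nabla\et)-\mus(\Amap(\nabla\u),\nabla\et).
\]
By the monotonicity bound \eqref{Mono} of Lemma~\ref{Lemm_1}, the first piece is at least $\frac{\mus}{4}\|\nabla\et\|_{L^3}^3$, and it stays on the left. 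The second piece is the consistency residual and goes to the right. This gives
\[
\tfrac12\tfrac{d}{dt}\|\et\|^2+\nu\|\nabla\et\|^2+\tfrac{\mus}{4}\|\nabla\et\|_{L^3}^3
\le -\bigl((\u\cdot\nabla)\u-(\w\cdot\nabla)\w,\et\bigr)+\mus(\Amap(\nabla\u),\nabla\et).
\]

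For the convection difference we write $(\u\cdot\nabla)\u-(\w\cdot\nabla)\w=(\et\cdot\nabla)\u+(\w\cdot\nabla)\et$. The term $((\w\cdot\nabla)\et,\et)$ vanishes because $\w$ is divergence-free with zero trace. The remaining term satisfies $|((\et\cdot\nabla)\u,\et)|\le\|\nabla\u\|_{L^\infty}\|\et\|^2$. This is the only source of the Gronwall factor, and after multiplying the inequality by $2$ it produces exactly the rate $2\|\nabla\u\|_{L^\infty}$ in $R_u$.

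For \eqref{model_er1} we bound the residual by H\"older's inequality, $\mus(\Amap(\nabla\u),\nabla\et)\le\mus\|\nabla\u\|_{L^3}^2\|\nabla\et\|_{L^3}$. We then apply Young's inequality with exponents $(3,3/2)$ and absorb $\frac{\mus}{8}\|\nabla\et\|_{L^3}^3$ into the left. What remains on the right is $c\,\mus\|\nabla\u\|_{L^3}^3$ with an explicit $c$. Optimizing the Young weight $\epsilon$ in $ab\le\frac{\epsilon^3a^3}{3}+\frac{2b^{3/2}}{3\epsilon^{3/2}}$ and multiplying by $2$ should give the constant $\frac{16\sqrt2}{3\sqrt3}$. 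The left side then carries $2\nu\|\nabla\et\|^2+\frac{\mus}{4}\|\nabla\et\|_{L^3}^3$. Since $\et(0)=0$, Gronwall's lemma in integral form (taking the supremum in time) yields \eqref{model_er1} with the factor $e^{R_u}$.

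For \eqref{model_er2} we instead integrate the residual by parts, $\mus(\Amap(\nabla\u),\nabla\et)=-\mus(\nabla\cdot\Amap(\nabla\u),\et)$, which is justified by the additional regularity and the zero trace of $\et$. Cauchy--Schwarz and Young then give the bound $\frac12\|\et\|^2+\frac{\mus^2}{2}\|\nabla\cdot\Amap(\nabla\u)\|^2$. Now the full $\frac{\mus}{4}\|\nabla\et\|_{L^3}^3$ stays on the left, which becomes $\frac{\mus}{2}$ after multiplying by $2$. The Gronwall rate becomes $2\|\nabla\u\|_{L^\infty}+1$, which produces $e^{R_u+T}$.

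The main obstacle is purely the bookkeeping of constants: getting exactly $16\sqrt2/(3\sqrt3)$ in \eqref{model_er1}, and ensuring the Gronwall step keeps the dissipative integrals on the left while taking the supremum over $t\in[0,T]$. The structural steps are standard.
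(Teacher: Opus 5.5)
Your proposal is correct and follows the paper's proof essentially step for step. It uses the same splitting of the eddy-viscosity term with \eqref{Mono}, the same convective identity giving the rate $2\|\nabla\u\|_{L^\infty}$, Young's inequality with $\epsilon^3=3/8$ for \eqref{model_er1}, and integration by parts with the extra $\tfrac12\|\et\|_{L^2}^2$ (rate $2\|\nabla\u\|_{L^\infty}+1$) for \eqref{model_er2}.

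One bookkeeping correction: choosing $\epsilon^3=3/8$ and then doubling the differential inequality gives only $\frac{8\sqrt2}{3\sqrt3}$ (respectively $\mus^2$). The remaining factor of two in $\frac{16\sqrt2}{3\sqrt3}$ (respectively $2\mus^2$) comes from bounding $\sup_t\|\et(t)\|_{L^2}^2$ and the full dissipation integral over $(0,T)$ separately by the same Gronwall bound and adding the two.
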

\begin{proof}
Subtract the two momentum equations, add and subtract $\mus\Amap(\nabla\u)$, and test with $\et$. The pressure contribution vanishes. Since both velocities are divergence-free and have zero boundary trace,
\[
((\w\cdot\nabla)\w-(\u\cdot\nabla)\u,\et)
=-((\et\cdot\nabla)\u,\et)
\le\|\nabla\u\|_{L^\infty}\|\et\|_{L^2}^2.
\]
Thus, by~\eqref{Mono},
\begin{equation}\label{model_eq}
\tfrac12\tfrac{d}{dt}\|\et\|_{L^2}^2+\nu\|\nabla\et\|_{L^2}^2
+\tfrac{\mus}{4}\|\nabla\et\|_{L^3}^3
\le \mus(\Amap(\nabla\u),\nabla\et)
+\|\nabla\u\|_{L^\infty}\|\et\|_{L^2}^2.
\end{equation}
Young's inequality, with $\varepsilon^3=3/8$, gives
\[
\mus\|\nabla\u\|_{L^3}^2\|\nabla\et\|_{L^3}
\le \tfrac{\epsilon^3}{3}\mus \|\nabla \et\|_{L^3(\Omega)}^3 
+ \tfrac{2}{3\epsilon^{3/2}}\mus\|\nabla \u\|_{L^3(\Omega)}^3
\le\tfrac{\mus}{8}\|\nabla\et\|_{L^3}^3
+\frac{4\sqrt2}{3\sqrt3}\mus\|\nabla\u\|_{L^3}^3.
\]
Multiplying~\eqref{model_eq} by two and applying the integrating-factor form of Gronwall's inequality bounds, at every $t$, the squared error plus the accumulated dissipation up to $t$ by
\[
\frac{8\sqrt2}{3\sqrt3}\mus e^{R_u}\int_0^T\|\nabla\u\|_{L^3}^3dt.
\]
Taking the supremum of the error and retaining the full dissipation integral separately costs at most a factor of two, yielding~\eqref{model_er1}.

For the stronger estimate, integration by parts and the zero trace of $\et$ give
\[
\mus(\Amap(\nabla\u),\nabla\et)
=-\mus(\nabla\cdot\Amap(\nabla\u),\et)
\le\tfrac{\mus^2}{2}\|\nabla\cdot\Amap(\nabla\u)\|_{L^2}^2
+\tfrac12\|\et\|_{L^2}^2.
\]
The same argument, now with growth coefficient $2\|\nabla\u\|_{L^\infty}+1$, proves~\eqref{model_er2}.
\end{proof}

\begin{myremark}
For fixed $C_s$, the modeling error is of order $O(\delta)$ in $L^\infty(0,T;L^2)$ under~\eqref{model_er1}, and improves to $O(\delta^2)$ under the stronger assumption~\eqref{model_er2}. These bounds involve the full gradient of the Navier--Stokes solution through the growth factor and do not invoke the scale-separated stability mechanism analyzed in~\cite{Burman}. The modeling error and the discretization error correspond to different comparisons of solutions and should therefore be distinguished.
\end{myremark}

\section{HDG discretization and structural properties}\label{Sec:num}
\subsection{Mesh, spaces, and interpolation}
Let $\{\Th\}$ be a shape-regular family of conforming simplicial meshes of $\Omega$. For $T\in\Th$, let $h_T=\operatorname{diam}(T)$ and let $\n$ be the outward unit normal on $\partial T$. Set $h=\max_T h_T\le1$. The set of all facets is denoted by $\Fh$, with interior and boundary subsets $\mathcal F_I$ and $\mathcal F_B$. We use
\[
(z,v)_{\Th}=\sum_{T\in\Th}(z,v)_T,\qquad
\langle z,v\rangle_{\partial\Th}=\sum_{T\in\Th}\langle z,v\rangle_{\partial T}.
\]
Consequently, an interior facet is counted twice in a pairing over $\partial\Th$, with the outward normal of the corresponding element. All factors involving $h_T$ remain inside element sums. Quasi-uniformity, namely $c_{\rm qu}h\le h_T\le h$ for a fixed $c_{\rm qu}>0$, will be imposed only for the simplified global-$h$ estimates.

For $k\ge1$, define
\begin{align*}
\V_h&=\{\v_h\in[L^2(\Omega)]^d:\v_h|_T\in[P_k(T)]^d\ \forall T\in\Th\},\\
Q_h&=\{q_h\in L^2_0(\Omega):q_h|_T\in P_{k-1}(T)\ \forall T\in\Th\},\\
\widetilde\V_h&=\{\widetilde\v_h\in[L^2(\Fh)]^d:
\widetilde\v_h|_F\in[P_k(F)]^d\ \forall F\in\Fh,
\ \widetilde\v_h|_{\mathcal F_B}=\bm0\},\\
\widetilde Q_h&=\{\widetilde q_h\in L^2(\Fh):
\widetilde q_h|_F\in P_k(F)\ \forall F\in\Fh\}.
\end{align*}
Facet unknowns are single-valued on each facet. The zero-mean constraint is imposed only on the element pressure. Write
\[
\V_h^\star=\V_h\times\widetilde\V_h,\qquad Q_h^\star=Q_h\times\widetilde Q_h,
\qquad \v_h^\star=(\v_h,\widetilde\v_h),\quad q_h^\star=(q_h,\widetilde q_h).
\]
For a pair $\v^\star=(\v,\widetilde\v)$, define its element--facet difference by
\begin{equation}\label{def:jump}
\jn{\v^\star}|_{\partial T}=\v|_{\partial T}-\widetilde\v|_{\partial T}.
\end{equation}
This is not the jump between the two element traces of an interior facet. The broken gradient is denoted by $\grad$; on a single element it is simply $\nabla$.

We use the Raviart--Thomas space $\mathrm{RT}_k(T)=[P_k(T)]^d+\x P_k(T)$ and its canonical interpolation operator $\Pi_{RT}$, defined by
\begin{equation}\label{eq:RTdef}
\begin{aligned}
(\Pi_{RT}\v-\v,\bm z)_T&=0&&\forall\bm z\in[P_{k-1}(T)]^d,\\
\langle(\Pi_{RT}\v-\v)\cdot\n,q\rangle_F&=0
&&\forall q\in P_k(F),\quad F\subset\partial T.
\end{aligned}
\end{equation}
The assembled interpolant is $H(\divop,\Omega)$-conforming and satisfies
\begin{equation}\label{eq:RTcommute}
\nabla\cdot\Pi_{RT}\v=\Pi_{k}(\nabla\cdot\v),
\end{equation}
where $\Pi_k$ is the elementwise $L^2$ projection onto $P_k$ \cite{Boffi,Ern}. In particular,
\begin{equation}\label{eq:RT4}
\nabla\cdot\v=0\quad\Longrightarrow\quad
\nabla\cdot\Pi_{RT}\v=0,\qquad
(\Pi_{RT}\v)|_T\in[P_k(T)]^d.
\end{equation}
Indeed, the only possible degree-$(k+1)$ component of an $\mathrm{RT}_k$ field is $\x q_k$, where $q_k$ is homogeneous of degree $k$. Its divergence contains the term $(d+k)q_k$, which must vanish if the RT field is divergence-free. Since $d+k>0$, this implies $q_k=0$. Hence, the degree-$(k+1)$ component vanishes, and the divergence-free RT field belongs to the element velocity space used here. Homogeneous normal boundary conditions are also preserved.

Let $\Pi_F$ denote the facetwise $L^2$ projection onto $[P_k(F)]^d$, and let $\gamma_h$ denote restriction of a sufficiently regular function to the entire mesh skeleton. For a divergence-free velocity with zero boundary trace, set
\begin{equation}\label{def:Pi}
\Pi\w^\star=(\Pi_{RT}\w,\Pi_F\gamma_h\w)\in\V_h^\star.
\end{equation}

\begin{mylemma}[Scaling and approximation estimates]\label{RT-lem}
For fixed polynomial degree and shape-regular simplices, the following estimates hold with mesh-independent constants. For a polynomial $z_h$,
\begin{equation}\label{inverse-lem}
\|\nabla z_h\|_{L^p(T)}\lesssim h_T^{-1}\|z_h\|_{L^p(T)},\qquad
\|z_h\|_{L^p(\partial T)}\lesssim h_T^{-1/p}\|z_h\|_{L^p(T)},
\quad 1\le p\le\infty.
\end{equation}
If $1\le\ell\le k+1$, $\v\in[W^{\ell,p}(T)]^d$, and $0\le j\le\ell$, then
\begin{align}
\|D^j(\v-\Pi_{RT}\v)\|_{L^p(T)}
&\lesssim h_T^{\ell-j}|\v|_{W^{\ell,p}(T)},\label{eq:RT1}\\
\|D^j(\v-\Pi_{RT}\v)\|_{L^p(\partial T)}
&\lesssim h_T^{\ell-j-1/p}|\v|_{W^{\ell,p}(T)},
\qquad 0\le j\le\ell-1.\label{eq:RT2}
\end{align}
For $\v\in[W^{1,\infty}(T)]^d$,
\begin{equation}\label{eq:RTbound3}
\|\nabla\Pi_{RT}\v\|_{L^\infty(T)}\lesssim\|\nabla\v\|_{L^\infty(T)},\qquad
\|\v-\Pi_{RT}\v\|_{L^\infty(T\cup\partial T)}\lesssim h_T\|\nabla\v\|_{L^\infty(T)}.
\end{equation}
The facet projection satisfies the corresponding $L^p$ trace-approximation estimates. In particular,
\begin{equation}\label{eq:facetapprox}
\|\gamma_h\v-\Pi_F\gamma_h\v\|_{L^p(\partial T)}
\lesssim h_T^{\ell-1/p}|\v|_{W^{\ell,p}(T)},\qquad 1\le\ell\le k+1.
\end{equation}
\end{mylemma}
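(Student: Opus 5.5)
The plan is to reduce every estimate to the reference simplex $\hat T$ through the affine map $F_T(\hat{\bm x})=B_T\hat{\bm x}+\bm b_T$. Shape regularity gives $\|B_T\|\lesssim h_T$, $\|B_T^{-1}\|\lesssim h_T^{-1}$ and $|\det B_T|\simeq h_T^d$. Facet measures scale like $h_T^{d-1}$. The standard relations $|v|_{W^{m,p}(T)}\simeq h_T^{-m+d/p}|\hat v|_{W^{m,p}(\hat T)}$ then hold, together with their facet analogues.

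\textbf{Inverse estimates.} For the inverse estimates~\eqref{inverse-lem}, use that all norms on the finite-dimensional space $P_k(\hat T)$ are equivalent. Hence $|\hat z|_{W^{1,p}(\hat T)}\lesssim\|\hat z\|_{L^p(\hat T)}$ and $\|\hat z\|_{L^p(\partial\hat T)}\lesssim\|\hat z\|_{L^p(\hat T)}$. Mapping back produces the factors $h_T^{-1}$ and $h_T^{(d-1)/p-d/p}=h_T^{-1/p}$, respectively.

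\textbf{Volume approximation~\eqref{eq:RT1}.} The RT interpolant must be transported with the contravariant Piola map $\v=|\det B_T|^{-1}B_T\hat\v\circ F_T^{-1}$, which commutes with $\Pi_{RT}$. The plan is:
\begin{itemize}
\item The degrees of freedom in~\eqref{eq:RTdef} involve facet moments. These are bounded on $W^{1,p}(\hat T)$ for every $p\ge1$ by the trace theorem (for $p=1$ one uses $W^{1,1}(\hat T)\hookrightarrow L^1(\partial\hat T)$). Hence $\hat\Pi_{RT}$ is bounded on $W^{1,p}(\hat T)$.
\item $\hat\Pi_{RT}$ reproduces $[P_k]^d$, since $[P_k]^d\subset\mathrm{RT}_k$.
\item By the Bramble--Hilbert lemma, $\|\hat D^j(\hat\v-\hat\Pi_{RT}\hat\v)\|_{L^p(\hat T)}\lesssim|\hat\v|_{W^{\ell,p}(\hat T)}$ for $1\le\ell\le k+1$ and $0\le j\le\ell$.
\item Scaling back, the Piola factors $|\det B_T|^{-1}\|B_T\|$ and the inverse powers balance by shape regularity. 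This yields $h_T^{\ell-j}$.
\end{itemize}

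\textbf{Trace approximation~\eqref{eq:RT2}.} This follows from the multiplicative trace inequality
\[
\|\phi\|_{L^p(\partial T)}^p\lesssim h_T^{-1}\|\phi\|_{L^p(T)}^p+\|\phi\|_{L^p(T)}^{p-1}\|\nabla\phi\|_{L^p(T)}.
\]
Apply it with $\phi=D^j(\v-\Pi_{RT}\v)$, using~\eqref{eq:RT1} for $j$ and $j+1\le\ell$. This gives $h_T^{-1/p}h_T^{\ell-j}$. The restriction $j\le\ell-1$ is exactly what is needed so that~\eqref{eq:RT1} is available for the gradient term.

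\textbf{Facet projection~\eqref{eq:facetapprox}.} $\Pi_F$ is the best approximation in $L^2(F)$ and reproduces $P_k(F)$. It is $L^p$-stable on each facet uniformly, by scaling from the reference facet. Consequently,
\[
\|\gamma_h\v-\Pi_F\gamma_h\v\|_{L^p(F)}\lesssim\|\v-\bm p\|_{L^p(F)}
\]
for any $\bm p\in[P_k(T)]^d$. Take $\bm p$ to be the averaged Taylor polynomial of degree $\ell-1\le k$ on $T$. The trace inequality above, combined with Bramble--Hilbert for $\v-\bm p$, then yields $h_T^{\ell-1/p}|\v|_{W^{\ell,p}(T)}$.

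\textbf{$W^{1,\infty}$ bounds~\eqref{eq:RTbound3}.} Take $p=\infty$ and $\ell=1$ in~\eqref{eq:RT1} to get $\|\v-\Pi_{RT}\v\|_{L^\infty(T)}\lesssim h_T\|\nabla\v\|_{L^\infty(T)}$. The same bound on $\partial T$ follows from~\eqref{eq:RT2} with $p=\infty$, or by continuity. Taking $j=1$ gives $\|\nabla(\v-\Pi_{RT}\v)\|_{L^\infty}\lesssim\|\nabla\v\|_{L^\infty}$, and the triangle inequality gives the stability estimate.

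\textbf{Main obstacle.} I expect the delicate point to be the well-definedness and boundedness of $\Pi_{RT}$ at the low end $\ell=1$ with $p$ close to $1$, where facet traces are only $L^1$. The Piola scaling in $L^p$ also needs care; in particular, derivative bounds require $\|B_T\|\|B_T^{-1}\|\lesssim1$. I would handle both by citing the reference-element analysis in \cite{Boffi,Ern}, which covers $W^{1,p}$ with $p\ge1$. The remaining steps are standard scaling arguments.
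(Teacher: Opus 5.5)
Your proposal is correct and follows essentially the same route as the paper. You use scaling to the reference simplex with norm equivalence for the inverse bounds, reference-element boundedness of $\Pi_{RT}$ (via the $W^{1,p}$ trace theorem) with polynomial reproduction and a trace inequality for \eqref{eq:RT1}--\eqref{eq:RT2}, and $L^p$-stability of $\Pi_F$ plus comparison with a local polynomial for \eqref{eq:facetapprox}. The only cosmetic difference is in \eqref{eq:RTbound3}: you read it off as the case $\ell=1$, $p=\infty$ of \eqref{eq:RT1}--\eqref{eq:RT2}, whereas the paper subtracts an elementwise constant and applies reference-element $W^{1,\infty}$ stability, which is the same Bramble--Hilbert mechanism.
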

\begin{proof}
The inverse bounds follow by scaling and equivalence of polynomial norms on a reference simplex. The interpolation estimates follow from reference-element boundedness, polynomial reproduction, and the trace inequality; see \cite{Boffi,Ciarlet2002,Ern}. For~\eqref{eq:RTbound3}, subtract an elementwise constant, use reproduction of constants, and apply reference-element $W^{1,\infty}$ stability. The facet projection is bounded in $L^p$ for fixed degree on shape-regular facets, so comparison with a local polynomial approximant proves~\eqref{eq:facetapprox}. The restriction $j\le\ell-1$ in~\eqref{eq:RT2} ensures that the indicated derivative has a controlled trace.
\end{proof}


\subsection{Forms and fully discrete scheme}
Let $N\ge1$, $\tau=T/N$, and $t_n=n\tau$. For a continuous function, $\w^n=\w(t_n)$ denotes its exact nodal value, whereas $\w_h^n$ is a discrete unknown. Set $\dt z^n=(z^n-z^{n-1})/\tau$.
We prescribe nodal data $\bm f^n\in[L^2(\Omega)]^d$. In the consistency and error analysis, $\bm f^n=\bm f(t_n)$ with well-defined point values is assumed. For merely $L^2$-in-time forcing, interval averages can instead be used, but their quadrature residual must then be included in the consistency equation.

For pairs of element and facet functions, define
\begin{align}
a_h(\u^\star,\v^\star)
={}&\nu\sum_{T\in\Th}\Bigl[
(\nabla\u,\nabla\v)_T
-\langle\nabla\u\n,\jn{\v^\star}\rangle_{\partial T}
-\langle\jn{\u^\star},\nabla\v\n\rangle_{\partial T}
+\tfrac{\alpha_1}{h_T}\langle\jn{\u^\star},\jn{\v^\star}\rangle_{\partial T}\Bigr],\label{def:ah}\\
s_h(\bm z^\star,\u^\star,\v^\star)
={}&\mus\sum_{T\in\Th}\Bigl[
(|\nabla\bm z|\nabla\u,\nabla\v)_T
-\langle|\nabla\bm z|\nabla\u\n,\jn{\v^\star}\rangle_{\partial T}\notag\\
&\hspace{16mm}-\langle|\nabla\bm z|\jn{\u^\star},\nabla\v\n\rangle_{\partial T}
+\tfrac{\alpha_2}{h_T^2}\langle|\jn{\bm z^\star}|\jn{\u^\star},\jn{\v^\star}\rangle_{\partial T}\Bigr],\label{def:sh}\\
c_h(\bm b,\u^\star,\v^\star)
={}&-(\u\otimes\bm b,\grad\v)_{\Th}
+\tfrac12\langle(\bm b\cdot\n)(\u+\widetilde\u),\jn{\v^\star}\rangle_{\partial\Th}\notag\\
&+\tfrac12\langle|\bm b\cdot\n|\jn{\u^\star},\jn{\v^\star}\rangle_{\partial\Th},\label{def:ch}\\
b_h(\v^\star,q^\star)
={}&(\grad\cdot\v,q)_{\Th}
-\langle\jn{\v^\star}\cdot\n,\widetilde q\rangle_{\partial\Th},\label{def:bh}\\
J_h(\u^\star,\v^\star)
={}&\langle\jn{\u^\star},\jn{\v^\star}\rangle_{\partial\Th}.\label{def:Jh}
\end{align}
Here $\alpha_1,\alpha_2,\alpha_3$ are positive penalty parameters, fixed with respect to $h$, $\tau$, $\nu$, and $\delta$. We also write $S_h(\u^\star;\v^\star)=s_h(\u^\star,\u^\star,\v^\star)$. The forms $s_h$ and $c_h$ are not trilinear: their first arguments enter through absolute values. For a fixed first argument, they are bilinear in their last two arguments.

Given $\w_h^0=\Pi_{RT}\w_0$, find
$(\w_h^{n,\star},r_h^{n,\star})\in\V_h^\star\times Q_h^\star$ for $n=1,\ldots,N$ such that
\begin{equation}\label{eq:fulld}
\begin{aligned}
(\dt\w_h^n,\v_h)_{\Th}+a_h(\w_h^{n,\star},\v_h^\star)
+S_h(\w_h^{n,\star};\v_h^\star)
+c_h(\w_h^n,\w_h^{n,\star},\v_h^\star)&\\
+\alpha_3J_h(\w_h^{n,\star},\v_h^\star)
-b_h(\v_h^\star,r_h^{n,\star})&=(\bm f^n,\v_h)_{\Th},\\
b_h(\w_h^{n,\star},q_h^\star)&=0
\end{aligned}
\end{equation}
for every $(\v_h^\star,q_h^\star)\in\V_h^\star\times Q_h^\star$.
The first backward Euler step requires only the element velocity at the preceding time level. Although an initial facet value may be prescribed as $\Pi_F\gamma_h\w_0$, it does not contribute to the first backward Euler difference.

For clarity, the element momentum flux corresponding to~\eqref{eq:fulld} is
\[
\bm\sigma_h^n=-r_h^n\bm I+\nu\nabla\w_h^n+\mus\Amap(\nabla\w_h^n)-\w_h^n\otimes\w_h^n.
\]
With $j_h^n=\jn{\w_h^{n,\star}}$, its numerical normal trace on $\partial T$ is
\begin{equation}\label{momentum flux trace}
\begin{aligned}
\widehat{\bm\sigma}_h^n\n={}&-\widetilde r_h^n\n+\nu\nabla\w_h^n\n+\mus\Amap(\nabla\w_h^n)\n
-\Bigl(\tfrac{\nu\alpha_1}{h_T}+\alpha_3\Bigr)j_h^n
-\tfrac{\mus\alpha_2}{h_T^2}|j_h^n|j_h^n\\
&-\tfrac12(\w_h^n\cdot\n)(\w_h^n+\widetilde\w_h^n)
-\tfrac12|\w_h^n\cdot\n|j_h^n.
\end{aligned}
\end{equation}
The last two terms select the element value on outflow and the facet value on inflow. In the element momentum equation, this flux is supplemented by the two symmetrizing consistency terms
$-\langle\nu j_h^n,\nabla\v_h\n\rangle_{\partial T}$ and
$-\mus\langle|\nabla\w_h^n|j_h^n,\nabla\v_h\n\rangle_{\partial T}$.
Testing the facet equation enforces weak continuity of the numerical normal momentum flux. This is the same interior-penalty/upwind discretization expressed by the compact form~\eqref{eq:fulld}.

\subsection{Norms, traces, and basic bounds}
For discrete pairs, set
\begin{align}
\nnorm{\v_h^\star}_a^2&=\sum_T\left(\|\nabla\v_h\|_{L^2(T)}^2+	\frac{\alpha_1}{h_T}\|\jn{\v_h^\star}\|_{L^2(\partial T)}^2\right),\label{def:norma}\\
\nnorm{\v_h^\star}_s^3&=\sum_T\left(\|\nabla\v_h\|_{L^3(T)}^3+	\frac{\alpha_2}{h_T^2}\|\jn{\v_h^\star}\|_{L^3(\partial T)}^3\right),\label{def:norms}\\
\nnorm{q_h^\star}_q^2&=\sum_T\left(\|q_h\|_{L^2(T)}^2+h_T\|\widetilde q_h\|_{L^2(\partial T)}^2\right).\label{def:normq}
\end{align}
The zero boundary value of the facet velocity makes $\nnorm{\cdot}_a$ and $\nnorm{\cdot}_s$ norms. The broken Poincar\'e inequality gives $\|\v_h\|_{L^2(\Omega)}\le C_P\nnorm{\v_h^\star}_a$; the interelement jumps and the boundary trace of $\v_h$ are controlled by its element--facet differences.

To evaluate the forms on exact solutions, define
\[
\V_{\rm reg}=[H^2(\Omega)\cap W^{1,3}_0(\Omega)\cap W^{1,\infty}(\Omega)]^d,
\qquad Q_{\rm reg}=H^1(\Omega)\cap L^2_0(\Omega),
\]
\[
\V^\star(h)=(\V_h+\V_{\rm reg})\times(\widetilde\V_h+\gamma_h\V_{\rm reg}),
\quad Q^\star(h)=(Q_h+Q_{\rm reg})\times(\widetilde Q_h+\gamma_hQ_{\rm reg}).
\]
The added trace spaces are traces on all facets, not spaces on $\Gamma$ alone. For $\v^\star\in\V^\star(h)$, also set
\begin{equation}\label{def:normap}
\nnorm{\v^\star}_{a'}^2=\nnorm{\v^\star}_a^2+
\sum_T\tfrac{h_T}{\alpha_1}\|\nabla\v\n\|_{L^2(\partial T)}^2.
\end{equation}

\begin{mylemma}[Linear stability estimates]\label{lem:linear}
For sufficiently large $\alpha_1$, there exist positive mesh-independent constants $C_a,C_b,\beta_b$ such that
\begin{align}
a_h(\v_h^\star,\v_h^\star)&\ge C_a\nu\nnorm{\v_h^\star}_a^2,\label{ah_eq}\\
|a_h(\u^\star,\v_h^\star)|&\le C\nu\nnorm{\u^\star}_{a'}\nnorm{\v_h^\star}_a,\label{ah_bound}\\
|b_h(\v_h^\star,q_h^\star)|&\le C_b\nnorm{\v_h^\star}_a\nnorm{q_h^\star}_q,\label{eq_q1}\\
\beta_b\nnorm{q_h^\star}_q&\le
\sup_{\bm0\ne\v_h^\star\in\V_h^\star}
\frac{b_h(\v_h^\star,q_h^\star)}{\nnorm{\v_h^\star}_a}.\label{eq_q2}
\end{align}
Here $\u^\star\in\V^\star(h)$, $\v_h^\star\in\V_h^\star$, and $q_h^\star\in Q_h^\star$.
\end{mylemma}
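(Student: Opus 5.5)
The four estimates are treated separately; each reduces to elementwise Cauchy--Schwarz plus the discrete trace inequality \eqref{inverse-lem}. The inf-sup bound \eqref{eq_q2} needs a Fortin-type construction and is the main obstacle.

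\emph{Coercivity \eqref{ah_eq}.} Take $\u^\star=\v^\star=\v_h^\star$ in \eqref{def:ah}. Then
\[
a_h(\v_h^\star,\v_h^\star)=\nu\sum_T\Bigl[\|\nabla\v_h\|_{L^2(T)}^2-2\langle\nabla\v_h\n,\jn{\v_h^\star}\rangle_{\partial T}+\tfrac{\alpha_1}{h_T}\|\jn{\v_h^\star}\|_{L^2(\partial T)}^2\Bigr].
\]
Since $\nabla\v_h|_T$ is a polynomial, \eqref{inverse-lem} with $p=2$ gives $\|\nabla\v_h\n\|_{L^2(\partial T)}\le C_{\rm tr}h_T^{-1/2}\|\nabla\v_h\|_{L^2(T)}$. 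Young's inequality then yields
\[
2|\langle\nabla\v_h\n,\jn{\v_h^\star}\rangle_{\partial T}|\le \tfrac12\|\nabla\v_h\|_{L^2(T)}^2+\tfrac{2C_{\rm tr}^2}{h_T}\|\jn{\v_h^\star}\|_{L^2(\partial T)}^2.
\]
For $\alpha_1\ge 4C_{\rm tr}^2$, this gives \eqref{ah_eq} with $C_a=1/2$.

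\emph{Boundedness \eqref{ah_bound}.} Bound each of the four terms by Cauchy--Schwarz with weights $h_T^{\pm1/2}$. The term $\langle\nabla\u\n,\jn{\v_h^\star}\rangle_{\partial T}$ is controlled by $(h_T/\alpha_1)^{1/2}\|\nabla\u\n\|_{L^2(\partial T)}\cdot(\alpha_1/h_T)^{1/2}\|\jn{\v_h^\star}\|_{L^2(\partial T)}$. This is exactly why $\u^\star$ is only required to lie in $\nnorm{\cdot}_{a'}$, the norm that contains $\|\nabla\u\n\|_{L^2(\partial T)}$.

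For the symmetric term $\langle\jn{\u^\star},\nabla\v_h\n\rangle_{\partial T}$, the discrete trace inequality is applied to $\v_h$ rather than to $\u$. This bounds the term by $(\alpha_1/h_T)^{1/2}\|\jn{\u^\star}\|_{L^2(\partial T)}\cdot C\alpha_1^{-1/2}\|\nabla\v_h\|_{L^2(T)}$. Summing over $T$ gives \eqref{ah_bound}.

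\emph{Boundedness of $b_h$, \eqref{eq_q1}.} Use $|\nabla\cdot\v_h|\le\sqrt d|\nabla\v_h|$ and write
\[
\langle\jn{\v_h^\star}\cdot\n,\widetilde q\rangle_{\partial T}\le h_T^{-1/2}\|\jn{\v_h^\star}\|_{L^2(\partial T)}\,h_T^{1/2}\|\widetilde q\|_{L^2(\partial T)}.
\]
Cauchy--Schwarz over $T$ then gives \eqref{eq_q1}, with $C_b$ depending on $\alpha_1^{-1/2}$.

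\emph{Inf-sup \eqref{eq_q2}.} I would split $q_h^\star=(q_h,\widetilde q_h)$ and build two test functions.

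\textbf{(i) Element pressure.} Since $q_h\in L^2_0$, the continuous inf-sup condition gives $\v\in[H^1_0]^d$ with $\nabla\cdot\v=q_h$ and $\|\v\|_{H^1}\le C\|q_h\|$. Take $\v_h^1=(\Pi_{RT}\v,\Pi_F\gamma_h\v)$. By \eqref{eq:RTcommute} and the fact that $q_h|_T\in P_{k-1}$, $(\nabla\cdot\Pi_{RT}\v,q_h)=\|q_h\|^2$.

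The facet part of $b_h(\v_h^1,q_h^\star)$ does not vanish exactly; it equals $\langle(\Pi_{RT}\v-\Pi_F\v)\cdot\n,\widetilde q_h\rangle$. By \eqref{eq:RTdef}, the normal components of $\Pi_{RT}\v$ and $\Pi_F\v$ have the same $P_k$ moments on $F$. Since $\widetilde q_h|_F\in P_k(F)$, the term vanishes. Hence $b_h(\v_h^1,q_h^\star)=\|q_h\|^2$. Stability $\nnorm{\v_h^1}_a\le C\|\v\|_{H^1}$ follows from \eqref{eq:RT1}, \eqref{eq:RT2} and \eqref{eq:facetapprox} with $\ell=1$.

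\textbf{(ii) Facet pressure.} Choose $\v_h^2$ with $\widetilde\v_h=\bm0$ and $\v_h|_T\in\mathrm{RT}$-type degrees of freedom: set the normal moments $\v_h\cdot\n|_F=-h_T\widetilde q_h$ on each $F\subset\partial T$ and the interior moments to zero. This element is discontinuous, which is allowed since $\v_h$ is broken, and it must lie in $P_k$. Then
\[
b_h(\v_h^2,q_h^\star)=\sum_T h_T\|\widetilde q_h\|_{L^2(\partial T)}^2+(\nabla\cdot\v_h,q_h)_T.
\]
By scaling, $\nnorm{\v_h^2}_a\le C(\sum_T h_T\|\widetilde q_h\|^2_{L^2(\partial T)})^{1/2}$.

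The cross term $(\nabla\cdot\v_h^2,q_h)$ is absorbed by Young's inequality. Combining $\v_h^1+\epsilon\v_h^2$ with small $\epsilon$ then gives \eqref{eq_q2}.

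\textbf{Main obstacle.} The delicate step is constructing a polynomial of degree $k$ whose normal trace on every facet of $T$ is prescribed in $P_k(F)$ and is bounded by scaling. I would realize it as an $[P_k(T)]^d$ lifting: on the reference element, use the BDM$_k$ degrees of freedom, which are unisolvent for $[P_k]^d$ with exactly facet normal moments against $P_k(F)$. Then map by the Piola transform, which keeps the scaling bounds.
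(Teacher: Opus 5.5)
Your arguments for coercivity, for continuity of $a_h$ in the $a'$-norm, and for the bound on $b_h$ coincide with the paper's: an inverse trace inequality on the discrete factor, then weighted Cauchy--Schwarz and Young. The difference is the inf-sup condition \eqref{eq_q2}. The paper gives no argument there and cites the standard Rhebergen--Wells result for this element/facet pressure pairing. You instead give a self-contained two-step construction:
\begin{itemize}
\item for $q_h$, continuous inf-sup plus a Fortin-type interpolant whose normal facet moments against $P_k(F)$ make the facet term vanish;
\item for $\widetilde q_h$, a BDM$_k$ lifting of $-h_T\widetilde q_h$ into element normal traces;
\item then the combination $\v_h^1+\epsilon\v_h^2$.
\end{itemize}
This is essentially the argument behind the cited references, and it checks out. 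The choice $q_h\in L^2_0$ disposes of the joint constant kernel. The scaling bounds are correct. The explicit construction also shows $\beta_b\gtrsim(1+\alpha_1)^{-1/2}$, a dependence the citation hides.

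One step needs an added justification. In step (i) you take $\v_h^1=(\Pi_{RT}\v,\Pi_F\gamma_h\v)$ as a test function, but $\Pi_{RT}$ maps into $\mathrm{RT}_k(T)=[P_k(T)]^d+\x P_k(T)$, which is strictly larger than the element space $[P_k(T)]^d$. You noticed this in step (ii) but not here. It is harmless in this case:
\begin{itemize}
\item By \eqref{eq:RTcommute}, $\nabla\cdot\Pi_{RT}\v=\Pi_k q_h=q_h\in P_{k-1}(T)$.
\item Write $\Pi_{RT}\v|_T=\bm p+\x q_k$ with $\bm p\in[P_k(T)]^d$ and $q_k$ homogeneous of degree $k$. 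Then the degree-$k$ homogeneous part of the divergence is $(d+k)q_k$, so $q_k=0$. This is the same argument the paper gives after \eqref{eq:RT4}.
\end{itemize}
Alternatively, use the BDM$_k$ interpolant from the start. It satisfies $\nabla\cdot\Pi_{\rm BDM}\v=\Pi_{k-1}\nabla\cdot\v$ and has the same normal facet moments against $P_k(F)$, so both $(\nabla\cdot\Pi_{\rm BDM}\v,q_h)=\|q_h\|_{L^2}^2$ and the cancellation of the facet term go through unchanged. Also state explicitly that $\Pi_F\gamma_h\v$ is single-valued and vanishes on $\mathcal F_B$ because $\v\in[H^1_0(\Omega)]^d$, so the facet component is admissible in $\widetilde{\V}_h$.
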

\begin{proof}
Coercivity follows from the polynomial inverse-trace inequality and Young's inequality. Cauchy--Schwarz in the weighted facet terms gives continuity in the extended $a'$-norm; the inverse-trace estimate gives $\nnorm{\v_h^\star}_{a'}\lesssim\nnorm{\v_h^\star}_a$ for discrete test functions. The pressure estimates are the standard inf-sup bounds for this element/facet pressure pairing; see \cite{Rhebergen,Rhebergen2018-2}. The pressure normalization removes the joint constant-pressure kernel.
\end{proof}

\begin{mylemma}[Nonlinear diffusion estimates]\label{lemma_sta_s_h}
For sufficiently large $\alpha_2$, there is a mesh-independent $\beta_s>0$ such that
\begin{equation}\label{eq_ts}
S_h(\v_h^\star;\v_h^\star)\ge\beta_s\mus\nnorm{\v_h^\star}_s^3.
\end{equation}
Moreover, for discrete arguments,
\begin{align}
|s_h(\bm z_h^\star,\u_h^\star,\v_h^\star)|
&\le C\mus\nnorm{\bm z_h^\star}_s\nnorm{\u_h^\star}_s\nnorm{\v_h^\star}_s,\label{eq_tb}\\
|S_h(\u_h^\star;\v_h^\star)-S_h(\bm z_h^\star;\v_h^\star)|
&\le C\mus\bigl(\nnorm{\u_h^\star}_s+\nnorm{\bm z_h^\star}_s\bigr)
\nnorm{\u_h^\star-\bm z_h^\star}_s\nnorm{\v_h^\star}_s.\label{eq:Slipschitz}
\end{align}
\end{mylemma}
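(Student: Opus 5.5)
The three estimates all rest on one device. Each facet term of $s_h$ is a product of a volume-derived trace ($|\nabla\bm z|$, $\nabla\u\n$, or $\nabla\v\n$) and element--facet differences. We rescale it with the weights $h_T^{-2/3}$ so that Hölder's inequality with exponents $(3,3,3)$ on $\partial T$ produces exactly the pieces of $\nnorm{\cdot}_s$. Discrete traces of gradients are then removed with the inverse-trace bound~\eqref{inverse-lem} at $p=3$, namely $\|\nabla z_h\|_{L^3(\partial T)}\lesssim h_T^{-1/3}\|\nabla z_h\|_{L^3(T)}$.

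\textbf{Coercivity~\eqref{eq_ts}.} Take $\bm z^\star=\u^\star=\v^\star=\v_h^\star$ and write $j=\jn{\v_h^\star}$. The volume term gives $\|\nabla\v_h\|_{L^3(T)}^3$ and the penalty term gives $\alpha_2h_T^{-2}\|j\|_{L^3(\partial T)}^3$. The two consistency terms combine to $-2\langle|\nabla\v_h|\nabla\v_h\n,j\rangle_{\partial T}$. By Hölder with exponents $(3/2,3)$ and the inverse-trace bound this is bounded by
\[
2\|\nabla\v_h\|_{L^3(\partial T)}^2\|j\|_{L^3(\partial T)}\le C h_T^{-2/3}\|\nabla\v_h\|_{L^3(T)}^2\|j\|_{L^3(\partial T)}.
\]
Young's inequality with exponents $(3/2,3)$ and a parameter $\varepsilon$ bounds the right-hand side by $\varepsilon\|\nabla\v_h\|_{L^3(T)}^3+C_\varepsilon h_T^{-2}\|j\|_{L^3(\partial T)}^3$. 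Fix $\varepsilon=1/2$. For $\alpha_2\ge 2C_{1/2}$ this yields~\eqref{eq_ts} with $\beta_s=\min\{1/2,(\alpha_2-C_{1/2})/\alpha_2\}$, since the $s$-norm carries $\alpha_2$ in its jump part.

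\textbf{Boundedness~\eqref{eq_tb}.} Bound the volume term directly by Hölder. The first consistency term is controlled by
\[
\|\nabla\bm z_h\|_{L^3(\partial T)}\|\nabla\u_h\|_{L^3(\partial T)}\|\jn{\v_h^\star}\|_{L^3(\partial T)}\lesssim \|\nabla\bm z_h\|_{L^3(T)}\|\nabla\u_h\|_{L^3(T)}\,h_T^{-2/3}\|\jn{\v_h^\star}\|_{L^3(\partial T)}.
\]
The second consistency term is handled symmetrically. The penalty term is $\prod h_T^{-2/3}\|\cdot\|_{L^3(\partial T)}$. Summing over $T$, a discrete Hölder inequality for triple sums with exponents $(3,3,3)$ gives~\eqref{eq_tb}, with constants depending on $\alpha_2$ through $\alpha_2^{-1/3}$ factors.

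\textbf{Lipschitz bound~\eqref{eq:Slipschitz}.} This is the step needing care, because $S_h$ is not linear in its first argument. Write the difference termwise:
\begin{itemize}
\item Volume term: $\Amap(\nabla\u_h)-\Amap(\nabla\bm z_h)$, bounded pointwise by~\eqref{Conti}.
\item Penalty term: $\Amap(\jn{\u_h^\star})-\Amap(\jn{\bm z_h^\star})$, likewise bounded by~\eqref{Conti}.
\item First consistency term: $(\Amap(\nabla\u_h)-\Amap(\nabla\bm z_h))\n$ on $\partial T$. Apply~\eqref{Conti} pointwise on the facet, then Hölder and the inverse-trace bound for each of the three polynomial gradients $\nabla\u_h$, $\nabla\bm z_h$ and $\nabla(\u_h-\bm z_h)$.
\item Second consistency term: split as
\[
|\nabla\u_h|\jn{\u_h^\star}-|\nabla\bm z_h|\jn{\bm z_h^\star}=|\nabla\u_h|\,\jn{\u_h^\star-\bm z_h^\star}+\bigl(|\nabla\u_h|-|\nabla\bm z_h|\bigr)\jn{\bm z_h^\star},
\]
and use the reverse triangle inequality $\bigl||\nabla\u_h|-|\nabla\bm z_h|\bigr|\le|\nabla(\u_h-\bm z_h)|$.
\end{itemize}
Each resulting product is of the same form as in~\eqref{eq_tb}: one factor from $\{\u_h,\bm z_h\}$, one from $\u_h-\bm z_h$, and one from $\v_h$. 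The same Hölder and inverse-trace bookkeeping then gives~\eqref{eq:Slipschitz}.

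The main obstacle is only this bookkeeping in the mixed consistency term, since the first argument and the jump argument of $s_h$ differ there. Here I am using that the jump of the difference is $\jn{\u_h^\star}-\jn{\bm z_h^\star}=\jn{\u_h^\star-\bm z_h^\star}$, which holds by linearity of~\eqref{def:jump}. No coercivity is needed for the last two estimates, so they hold for any $\alpha_2>0$.
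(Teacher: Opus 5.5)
Your proof is correct and follows essentially the same route as the paper. You use H\"older on $\partial T$, the polynomial inverse-trace bound $\|\nabla\v_h\|_{L^3(\partial T)}\lesssim h_T^{-1/3}\|\nabla\v_h\|_{L^3(T)}$ to produce the $h_T^{-2/3}$-weighted jump factor, Young's inequality with a sufficiently large $\alpha_2$ for coercivity, and the splitting via $\bigl||G|-|H|\bigr|\le|G-H|$ and \eqref{Conti} for the Lipschitz bound. The only cosmetic difference is that you fix the Young parameter first and then require $\alpha_2\ge 2C_{1/2}$, whereas the paper extracts an explicit factor $\alpha_2^{-1/3}$ and uses $X_T^2Y_T\le\tfrac23X_T^3+\tfrac13Y_T^3$.
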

\begin{proof}
On each element, put $X_T=\|\nabla\v_h\|_{L^3(T)}$ and
$Y_T=(\alpha_2/h_T^2)^{1/3}\|\jn{\v_h^\star}\|_{L^3(\partial T)}$. The two consistency terms in $S_h(\v_h^\star;\v_h^\star)$ have total absolute value at most
\[
C\|\nabla\v_h\|_{L^3(\partial T)}^2\|\jn{\v_h^\star}\|_{L^3(\partial T)}
\le C\alpha_2^{-1/3}X_T^2Y_T.
\]
Since $X_T^2Y_T\le\tfrac23X_T^3+\tfrac13Y_T^3$, a sufficiently large, mesh-independent $\alpha_2$ absorbs these terms into the positive volume and penalty terms, proving~\eqref{eq_ts}.

H\"older's inequality bounds the volume and penalty terms in~\eqref{eq_tb}. For a consistency term, scale its factors as
$h_T^{1/3}\|\nabla\bm z_h\|_{L^3(\partial T)}$,
$h_T^{1/3}\|\nabla\u_h\|_{L^3(\partial T)}$, and
$h_T^{-2/3}\|\jn{\v_h^\star}\|_{L^3(\partial T)}$.
The polynomial trace estimate and H\"older's inequality over the elements yield~\eqref{eq_tb}. Finally, expand each difference in~\eqref{eq:Slipschitz}, use
$||G|-|H||\le|G-H|$ and~\eqref{Conti}, and apply the same bounds.
\end{proof}

\subsection{Exact mass conservation and pressure robustness}
Define the discrete kernel
\begin{equation}\label{def:kernel}
\V_{\divop,h}^\star=\{\v_h^\star\in\V_h^\star:
 b_h(\v_h^\star,q_h^\star)=0\quad\forall q_h^\star\in Q_h^\star\}.
\end{equation}

\begin{mylemma}[Global divergence constraint]\label{lem:div-free}
For every $\v_h^\star\in\V_{\divop,h}^\star$,
\begin{equation}\label{eq:divfree}
\v_h\in H_0(\divop,\Omega),\qquad
\nabla\cdot\v_h|_T=0\quad\text{on every }T\in\Th.
\end{equation}
Here $H_0(\divop,\Omega)$ denotes vector fields in $H(\divop,\Omega)$ with zero normal boundary trace. In addition, $\Pi\w^\star\in\V_{\divop,h}^\star$ for every sufficiently regular $\w\in\V_{\divop0}$.
\end{mylemma}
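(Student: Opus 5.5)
The plan is to test the discrete constraint $b_h(\v_h^\star,q_h^\star)=0$ with well-chosen pressure pairs and read off the local information each choice gives. The argument proceeds in three steps, followed by the check on $\Pi\w^\star$.

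\emph{Step 1: the facet pressure.} First take $q_h=0$ and let $\widetilde q_h$ be arbitrary in $\widetilde Q_h$. Then
\[
\langle\jn{\v_h^\star}\cdot\n,\widetilde q_h\rangle_{\partial\Th}=0 .
\]
On an interior facet $F=\partial T^+\cap\partial T^-$, $\widetilde\v_h$ is single-valued and $\n^+=-\n^-$. The $\widetilde\v_h\cdot\n$ contributions from the two sides therefore cancel, leaving
\[
\langle \v_h^+\cdot\n^++\v_h^-\cdot\n^-,\widetilde q_h\rangle_F=0\qquad\forall\widetilde q_h\in P_k(F).
\]
The normal jump $[\![\v_h\cdot\n]\!]$ is a polynomial of degree $\le k$ on $F$, so it vanishes identically.

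On a boundary facet, $\widetilde\v_h=\bm0$. The same choice of $\widetilde q_h$ supported on $F$ gives $\v_h\cdot\n=0$ on $F$. Piecewise polynomials with continuous normal components lie in $H(\divop,\Omega)$, and the vanishing normal trace on $\Gamma$ puts $\v_h$ in $H_0(\divop,\Omega)$.

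\emph{Step 2: the element pressure.} Next take $\widetilde q_h=0$ and $q_h\in Q_h$. This gives $(\grad\cdot\v_h,q_h)_{\Th}=0$ for all $q_h\in Q_h$. Since $\nabla\cdot\v_h|_T\in P_{k-1}(T)$, we would like to choose $q_h=\grad\cdot\v_h$. That choice is admissible only if $\grad\cdot\v_h$ has zero mean, and this is where Step 1 is needed. Because $\v_h\in H_0(\divop,\Omega)$, the divergence theorem gives
\[
\int_\Omega\nabla\cdot\v_h=\int_\Gamma\v_h\cdot\n=0,
\]
so $\grad\cdot\v_h\in Q_h$. Testing with it yields $\|\nabla\cdot\v_h\|_{L^2(\Omega)}^2=0$, hence $\nabla\cdot\v_h|_T=0$ on every element.

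\emph{Step 3: the interpolant.} For $\w\in\V_{\divop0}$ regular enough that $\Pi_{RT}$ and the traces are defined, the element component $\Pi_{RT}\w$ lies in $\V_h$ by \eqref{eq:RT4}. It is divergence-free elementwise by \eqref{eq:RTcommute}, and it is $H(\divop)$-conforming. The facet component $\Pi_F\gamma_h\w$ lies in $[P_k(F)]^d$ and vanishes on $\mathcal F_B$, because $\w=\bm0$ on $\Gamma$. The volume term of $b_h(\Pi\w^\star,q_h^\star)$ is then zero.

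For the facet term we compute, on each $F\subset\partial T$,
\[
\langle(\Pi_{RT}\w-\Pi_F\w)\cdot\n,\widetilde q_h\rangle_F .
\]
Split this as
\[
\langle(\Pi_{RT}\w-\w)\cdot\n,\widetilde q_h\rangle_F+\langle(\w-\Pi_F\w)\cdot\n,\widetilde q_h\rangle_F .
\]
The first term vanishes by the facet moment condition in \eqref{eq:RTdef}. For the second, $\n$ is constant on a flat simplicial facet, so $\widetilde q_h\n\in[P_k(F)]^d$, and the term vanishes by the definition of the $L^2$ projection $\Pi_F$. Hence $b_h(\Pi\w^\star,q_h^\star)=0$ for all $q_h^\star$, that is, $\Pi\w^\star\in\V_{\divop,h}^\star$.

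The only delicate point is the order of the argument. The zero-mean constraint on $Q_h$ blocks the direct choice $q_h=\grad\cdot\v_h$, so the normal continuity and boundary condition from Step 1 must be established first in order to justify that choice in Step 2.
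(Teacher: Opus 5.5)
Your proposal is correct and follows the paper's proof: facet-pressure tests give normal continuity and the zero normal boundary trace, and these make $\grad\cdot\v_h$ an admissible zero-mean element-pressure test. The only minor difference is in the interpolant check, where you take a slightly more local and self-contained route. The paper cancels the interior-facet contributions using normal continuity of $\Pi_{RT}\w$ and single-valuedness of $\Pi_F\gamma_h\w$ (plus zero boundary traces), while you show that each element--facet term vanishes by itself via the RT facet moments and the $L^2$-projection property; both are valid.
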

\begin{proof}
First take $q_h=0$ in~\eqref{def:kernel}. On an interior facet, the two contributions from $\widetilde\v_h$ cancel, since it is single-valued and the normals have opposite signs. Since the element normal traces belong to $P_k(F)$, arbitrary facet pressure tests imply their normal continuity. On a boundary facet, $\widetilde\v_h=0$, so the same tests imply $\v_h\cdot\n=0$. Therefore $\v_h\in H_0(\divop,\Omega)$.

It follows that $\int_\Omega\grad\cdot\v_h=0$. Hence $\grad\cdot\v_h\in Q_h$ and can be used as the element pressure test, with $\widetilde q_h=0$. This yields $\|\grad\cdot\v_h\|_{L^2}^2=0$. Finally,~\eqref{eq:RT4}, normal continuity, and the single-valuedness and zero boundary trace of $\Pi_F\gamma_h\w$ give $b_h(\Pi\w^\star,q_h^\star)=0$.
\end{proof}

\begin{myprop}[Pressure robustness]\label{prop:pressure}
Assume the variational pairings are evaluated exactly. Replacing $\bm f^n$ by $\bm f^n+\nabla\phi^n$, with $\phi^n\in H^1(\Omega)$, leaves the set of discrete velocity solutions of~\eqref{eq:fulld} unchanged. In particular, a uniquely determined discrete velocity is unchanged.
\end{myprop}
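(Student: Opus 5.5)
The plan is to reduce the system~\eqref{eq:fulld} to an equivalent problem posed on the discrete kernel $\V_{\divop,h}^\star$ of~\eqref{def:kernel}, and then show that a gradient force drops out of that reduced problem. Fix a time level $n$ and regard $\w_h^{n-1}$ as given. A pair $(\w_h^{n,\star},r_h^{n,\star})$ solves~\eqref{eq:fulld} exactly when two conditions hold. First, $\w_h^{n,\star}\in\V_{\divop,h}^\star$. Second, the momentum equation holds for all $\v_h^\star\in\V_h^\star$. Restricting the test functions to $\V_{\divop,h}^\star$ kills the $b_h$ term. Hence every discrete velocity solution satisfies the reduced problem: find $\w_h^{n,\star}\in\V_{\divop,h}^\star$ with
\[
(\dt\w_h^n,\v_h)_{\Th}+a_h(\w_h^{n,\star},\v_h^\star)+S_h(\w_h^{n,\star};\v_h^\star)+c_h(\w_h^n,\w_h^{n,\star},\v_h^\star)+\alpha_3J_h(\w_h^{n,\star},\v_h^\star)=(\bm f^n,\v_h)_{\Th}
\]
for all $\v_h^\star\in\V_{\divop,h}^\star$.

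For the converse, suppose $\w_h^{n,\star}$ solves the reduced problem. The residual functional $\ell(\v_h^\star)$, defined as the right-hand side minus the left-hand side, is linear in $\v_h^\star$ once $\w_h^{n,\star}$ is fixed. It vanishes on $\ker B$, where $B:\V_h^\star\to(Q_h^\star)'$ is induced by $b_h$. Everything is finite-dimensional, so $\ell$ lies in $(\ker B)^\perp=\operatorname{range}B^{\mathsf T}$. Therefore some $r_h^{n,\star}\in Q_h^\star$ satisfies $\ell(\v_h^\star)=-b_h(\v_h^\star,r_h^{n,\star})$ for all $\v_h^\star$. The inf-sup bound~\eqref{eq_q2} makes $B^{\mathsf T}$ injective, so this $r_h^{n,\star}$ is even unique, although existence is all we need. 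It follows that the set of discrete velocity solutions at step $n$ coincides with the solution set of the reduced kernel problem.

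The key step is to show that the gradient force is invisible on the kernel, that is, $(\nabla\phi^n,\v_h)_{\Th}=0$ for every $\v_h^\star\in\V_{\divop,h}^\star$. By Lemma~\ref{lem:div-free}, $\v_h\in H_0(\divop,\Omega)$ and $\nabla\cdot\v_h=0$ elementwise, hence globally in $L^2$. The Green formula on $H^1(\Omega)\times H_0(\divop,\Omega)$ then gives $(\nabla\phi^n,\v_h)=-(\phi^n,\nabla\cdot\v_h)=0$. No boundary term appears because the normal trace vanishes. This is the only place where the hypothesis of exact evaluation of the pairings is used; quadrature would break the identity. As a result, the reduced problems with forcing $\bm f^n$ and $\bm f^n+\nabla\phi^n$ are literally identical.

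To conclude, I argue by induction on $n$, starting from the same $\w_h^0=\Pi_{RT}\w_0$. To compare whole trajectories, interpret the solution set as the set of sequences $(\w_h^{n,\star})_{n\le N}$ that solve the reduced problem step by step. Since the reduced problems coincide at every step for any fixed previous iterate, the two sets of admissible trajectories coincide. If the velocity is unique, it is therefore unchanged. Only the pressure changes, absorbing $\nabla\phi^n$. I see no real obstacle here. The one point requiring care is the converse direction, where pressure existence relies on the finite-dimensional closed-range argument rather than on uniqueness.
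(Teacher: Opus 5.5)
Your proposal is correct and follows essentially the same route as the paper. You restrict to the kernel $\V_{\divop,h}^\star$, use Lemma~\ref{lem:div-free} and the Green formula in $H_0(\divop,\Omega)$ to show $(\nabla\phi^n,\v_h)=0$, and recover a pressure for every kernel velocity; you merely make explicit the converse direction (existence of the pressure from $(\ker B)^\perp=\operatorname{range}B^{\mathsf T}$, with inf-sup needed only for uniqueness) and the induction over time levels, both of which the paper leaves implicit.
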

\begin{proof}
For $\v_h^\star\in\V_{\divop,h}^\star$, integration by parts in $H_0(\divop,\Omega)$ gives
\[
(\nabla\phi^n,\v_h)=- (\phi^n,\nabla\cdot\v_h)=0.
\]
Thus the equation restricted to the kernel is unchanged. The discrete inf-sup condition recovers a pressure for every such velocity, both before and after the change of force.
\end{proof}

For $\bm b\in H(\divop,\Omega)$ with elementwise zero divergence, elementwise integration by parts gives
\begin{equation}\label{eq:ch1}
\begin{aligned}
c_h(\bm b,\u^\star,\v^\star)
={}&((\bm b\cdot\grad)\u,\v)_{\Th}
-\tfrac12\langle(\bm b\cdot\n)\jn{\u^\star},\v+\widetilde\v\rangle_{\partial\Th}\\
&+\tfrac12\langle|\bm b\cdot\n|\jn{\u^\star},\jn{\v^\star}\rangle_{\partial\Th},
\end{aligned}
\end{equation}
provided the facet functions have the homogeneous boundary trace used here. In particular,
\begin{equation}\label{tri_eq}
c_h(\bm b,\v_h^\star,\v_h^\star)
=\tfrac12\langle|\bm b\cdot\n|,|\jn{\v_h^\star}|^2\rangle_{\partial\Th}\ge0.
\end{equation}
The cancellation of the facet contributions follows from the normal continuity of $\bm b$ across interelement facets, in addition to its broken divergence-free property.

\subsection{Consistency, energy stability, and existence}
\begin{mylemma}[Consistency]\label{lem:consistency}
Suppose $\w^n\in\V_{\rm reg}$, $r^n\in Q_{\rm reg}$, and the exact solution satisfies~\eqref{Smagorinsky_eq} at $t_n$ with sufficient regularity for the indicated normal fluxes. Put $\w^{n,\star}=(\w^n,\gamma_h\w^n)$ and $r^{n,\star}=(r^n,\gamma_h r^n)$. Then
\begin{equation}\label{consist}
\begin{aligned}
&(\dt\w^n,\v_h)_{\Th}+a_h(\w^{n,\star},\v_h^\star)
+S_h(\w^{n,\star};\v_h^\star)
+c_h(\w^n,\w^{n,\star},\v_h^\star)
-b_h(\v_h^\star,r^{n,\star})\\
&\qquad=(\bm f^n,\v_h)_{\Th}+(\dt\w^n-\partial_t\w^n,\v_h)_{\Th},\\
&b_h(\w^{n,\star},q_h^\star)=0.
\end{aligned}
\end{equation}
\end{mylemma}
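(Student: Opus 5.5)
The plan is to check that the exact element--facet pair $\w^{n,\star}=(\w^n,\gamma_h\w^n)$ has vanishing element--facet difference, so that every jump-dependent term in the forms drops out. After that, one elementwise integration by parts turns each remaining volume term into the strong form of the equation, and the boundary pieces reassemble into the facet pressure contribution.

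\textbf{Step 1: the jump terms vanish.} Since $\w^n\in H^2\cap W^{1,3}_0$, its element traces are single-valued and coincide with $\gamma_h\w^n$. Hence $\jn{\w^{n,\star}}=0$ on every $\partial T$. As a result:
\begin{itemize}
\item the symmetrizing and penalty terms of $a_h$ disappear;
\item the penalty term of $s_h$ disappears, because it carries $|\jn{\w^{n,\star}}|$;
\item the symmetrizing term $\langle|\nabla\w^n|\jn{\w^{n,\star}},\nabla\v_h\n\rangle$ of $s_h$ disappears;
\item the upwind term of $c_h$ disappears;
\item $J_h(\w^{n,\star},\cdot)=0$, which is why the $\alpha_3$ term is absent from~\eqref{consist}.
\end{itemize}

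\textbf{Step 2: the divergence identity.} Because $\nabla\cdot\w^n=0$, we have $b_h(\w^{n,\star},q_h^\star)=(\nabla\cdot\w^n,q_h)-\langle 0,\widetilde q_h\rangle=0$. This gives the second line of~\eqref{consist}.

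\textbf{Step 3: integrate by parts in the momentum terms.} For the momentum equation, use~\eqref{eq:ch1} with $\bm b=\w^n$, which is $H(\divop)$ and divergence-free. Together with Step 1 this gives $c_h(\w^n,\w^{n,\star},\v_h^\star)=((\w^n\cdot\nabla)\w^n,\v_h)$.

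Integrating by parts on each element, the remaining parts of $a_h$ and $S_h$ combine into the single flux $\bm\sigma_v:=\nu\nabla\w^n+\mus\Amap(\nabla\w^n)$:
\[
\sum_T\Bigl[(\bm\sigma_v,\nabla\v_h)_T-\langle\bm\sigma_v\n,\v_h-\widetilde\v_h\rangle_{\partial T}\Bigr]
=-(\nabla\cdot\bm\sigma_v,\v_h)-\sum_T\langle\bm\sigma_v\n,-\widetilde\v_h\rangle_{\partial T}.
\]
The last sum vanishes for the following reasons:
\begin{itemize}
\item $\bm\sigma_v\n$ is single-valued across interior facets. This is the assumed regularity of the normal fluxes, namely $\nabla\cdot\bm\sigma_v\in L^2$ with $H(\divop)$-type row traces.
\item $\widetilde\v_h$ is single-valued, so the two opposite normals cancel on each interior facet.
\item $\widetilde\v_h=0$ on $\mathcal F_B$.
\end{itemize}

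The pressure term works the same way. We have $-b_h(\v_h^\star,r^{n,\star})=-(\grad\cdot\v_h,r^n)+\langle(\v_h-\widetilde\v_h)\cdot\n,r^n\rangle_{\partial\Th}$. Integrating $(\grad\cdot\v_h,r^n)_T$ by parts gives $(\nabla r^n,\v_h)$ plus a term in $\widetilde\v_h\cdot\n\,r^n$. That term vanishes by single-valuedness of $r^n\in H^1$ and $\widetilde\v_h$, and by the zero boundary value of $\widetilde\v_h$.

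\textbf{Step 4: insert the equation at $t_n$.} The left side of~\eqref{consist} now equals
\[
(\dt\w^n+(\w^n\cdot\nabla)\w^n-\nabla\cdot\bm\sigma_v+\nabla r^n,\v_h).
\]
By~\eqref{Smagorinsky_eq} at $t_n$, this is $(\bm f^n,\v_h)+(\dt\w^n-\partial_t\w^n,\v_h)$, which is the claim.

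The main obstacle is the careful justification of the facet cancellation for the nonlinear flux $\mus|\nabla\w^n|\nabla\w^n\n$. Its traces must make sense on each facet and agree from both sides, and this is where the hypothesis ``sufficient regularity for the indicated normal fluxes'' is used. With $\w^n\in H^2\cap W^{1,\infty}$, the product $|\nabla\w^n|\nabla\w^n$ lies in $W^{1,2}$ elementwise. I would invoke the normal-trace theorem for $H(\divop)$ rows of $\bm\sigma_v$ to get the cancellation.
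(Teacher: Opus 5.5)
Your proposal is correct and follows essentially the paper's own argument: the element--facet differences of the exact pair vanish, the remaining volume terms are integrated by parts elementwise, and the single-valued normal fluxes (viscous, eddy, convective and pressure) cancel against the single-valued facet test function, which vanishes on $\Gamma$; you then insert the PDE at $t_n$ and get the constraint from $\nabla\cdot\w^n=0$, and your use of \eqref{eq:ch1} is simply the convective part of that same integration by parts. One small sharpening: since $\nabla\w^n\in H^1(\Omega)\cap L^\infty(\Omega)$, the chain rule gives $\Amap(\nabla\w^n)\in[H^1(\Omega)]^{d\times d}$ globally, not just elementwise, so the eddy-flux cancellation holds a.e.\ on each facet directly. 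This is preferable to the $H(\divop)$ normal-trace route, because the facetwise polynomial $\widetilde\v_h$ need not lie in $H^{1/2}(\partial T)$.
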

\begin{proof}
All element--facet differences of the exact velocity vanish. Integrate the remaining volume terms by parts on each element. The facet test functions pair with the normal component of the total flux
\[
-r^n\bm I+\nu\nabla\w^n+\mus\Amap(\nabla\w^n)-\w^n\otimes\w^n.
\]
On each interior facet, the normal traces from the two adjacent elements cancel, while the facet velocity test function vanishes on the external boundary. The same cancellation applies to the pressure traction.
The PDE yields the momentum identity with $\partial_t\w^n$. Replacing $\partial_t\w^n$ by $\dt\w^n$ then gives~\eqref{consist}. The constraint follows from the exact incompressibility of $\w^n$. Moreover, $J_h(\w^{n,\star},\v_h^\star)=0$.
\end{proof}

\begin{mytheorem}[Unconditional energy stability]\label{Theo:energy}
Choose $\alpha_1$ and $\alpha_2$ as in Lemmas~\ref{lem:linear} and~\ref{lemma_sta_s_h}, and let $\alpha_3>0$. Every solution sequence of~\eqref{eq:fulld} satisfies, for $1\le M\le N$,
\begin{equation}\label{eq:energybound}
\begin{aligned}
&\max_{0\le m\le M}\|\w_h^m\|_{L^2}^2
+\sum_{n=1}^M\|\w_h^n-\w_h^{n-1}\|_{L^2}^2\\
&\quad+\tau\sum_{n=1}^M\left[
\nu C_a\nnorm{\w_h^{n,\star}}_a^2+
\mus\beta_s\nnorm{\w_h^{n,\star}}_s^3+
\alpha_3\|j_h^n\|_{L^2(\partial\Th)}^2+
\tfrac12\langle|\w_h^n\cdot\n|,|j_h^n|^2\rangle_{\partial\Th}\right]\\
&\qquad\le C\left(\|\w_h^0\|_{L^2}+\tau\sum_{n=1}^M\|\bm f^n\|_{L^2}\right)^2.
\end{aligned}
\end{equation}

\end{mytheorem}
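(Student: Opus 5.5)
Test \eqref{eq:fulld} with $\v_h^\star=\w_h^{n,\star}$ and $q_h^\star=r_h^{n,\star}$. The second equation gives $b_h(\w_h^{n,\star},r_h^{n,\star})=0$, so the pressure term drops from the first equation. Thus $\w_h^{n,\star}\in\V_{\divop,h}^\star$, and by Lemma~\ref{lem:div-free} $\w_h^n$ is $H_0(\divop)$-conforming with vanishing elementwise divergence. Hence \eqref{tri_eq} applies with $\bm b=\w_h^n$:
\[
c_h(\w_h^n,\w_h^{n,\star},\w_h^{n,\star})=\tfrac12\langle|\w_h^n\cdot\n|,|j_h^n|^2\rangle_{\partial\Th}.
\]
The remaining terms are handled as follows:
\begin{itemize}
\item the time-derivative term uses the polarization identity
\[
2(\w_h^n-\w_h^{n-1},\w_h^n)=\|\w_h^n\|^2-\|\w_h^{n-1}\|^2+\|\w_h^n-\w_h^{n-1}\|^2;
\]
\item the diffusion terms use \eqref{ah_eq} and \eqref{eq_ts};
\item the facet penalty gives exactly $\alpha_3J_h(\w_h^{n,\star},\w_h^{n,\star})=\alpha_3\|j_h^n\|_{L^2(\partial\Th)}^2$.
\end{itemize}
Multiplying by $2\tau$, we obtain for each $n$
\[
\|\w_h^n\|^2-\|\w_h^{n-1}\|^2+\|\w_h^n-\w_h^{n-1}\|^2+2\tau D_n\le 2\tau\|\bm f^n\|_{L^2}\|\w_h^n\|_{L^2},
\]
where $D_n$ is the bracketed dissipation in \eqref{eq:energybound}. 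The constants $2$ can be kept or halved harmlessly.

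Summing over $n=1,\dots,m$ gives
\[
\|\w_h^m\|^2+\sum_{n\le m}\|\w_h^n-\w_h^{n-1}\|^2+2\tau\sum_{n\le m}D_n\le\|\w_h^0\|^2+2\tau\sum_{n\le m}\|\bm f^n\|\,\|\w_h^n\|.
\]
The main point is to avoid Young's inequality with the viscous term, which would introduce $\nu^{-1}$, and to avoid a Gronwall factor. Instead, set $E_M=\max_{0\le m\le M}\|\w_h^m\|$ and bound the right-hand side by $\|\w_h^0\|^2+2E_M F_M$, where $F_M=\tau\sum_{n=1}^M\|\bm f^n\|$.

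Taking the maximum over $m\le M$ yields $E_M^2\le\|\w_h^0\|^2+2E_MF_M$. Solving this quadratic inequality gives
\[
E_M\le F_M+\bigl(F_M^2+\|\w_h^0\|^2\bigr)^{1/2}\le 2F_M+\|\w_h^0\|,
\]
so $E_M^2\le 4(\|\w_h^0\|+F_M)^2$. Inserting this back into the summed inequality at $m=M$ bounds the increment and dissipation sums by $\|\w_h^0\|^2+2E_MF_M\le C(\|\w_h^0\|+F_M)^2$.

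Adding the two bounds proves \eqref{eq:energybound} with an absolute constant $C$, independent of $\nu$, $\delta$, $h$ and $\tau$. The only delicate points are justifying \eqref{tri_eq} for the discrete convective field, which needs exact global divergence-freeness from Lemma~\ref{lem:div-free}, and using this max-absorption argument in place of a Gronwall estimate. The assumption that every solution exists is not needed: the estimate holds for any solution sequence.
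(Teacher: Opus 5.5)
Your proof is correct and follows the paper's argument. You test with the solution on the discrete kernel, apply the polarization identity, the lower bounds \eqref{ah_eq}, \eqref{eq_ts}, \eqref{tri_eq} and the exact facet-penalty identity, and bound the force work without using Young's inequality against the viscous term, so no $\nu^{-1}$ or Gronwall factor appears.

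The only variation is in how the maximum is controlled. The paper drops the dissipation and divides by $\|\w_h^n\|_{L^2}$ to get the one-step estimate $\|\w_h^n\|_{L^2}\le\|\w_h^{n-1}\|_{L^2}+\tau\|\bm f^n\|_{L^2}$, then iterates. You instead absorb the maximum through the quadratic inequality $E_M^2\le\|\w_h^0\|_{L^2}^2+2E_MF_M$. Both give an absolute constant, and yours handles the cross term $F_M\|\w_h^0\|_{L^2}$ explicitly.
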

\begin{proof}
The pressure equation and Lemma~\ref{lem:div-free} imply $\w_h^{n,\star}\in\V_{\divop,h}^\star$. Test the momentum equation with $\w_h^{n,\star}$. The pressure term vanishes, and all four spatial contributions are bounded below by the nonnegative terms in~\eqref{eq:energybound}. Dropping them first gives
\[
\|\w_h^n\|_{L^2}\le\|\w_h^{n-1}\|_{L^2}+\tau\|\bm f^n\|_{L^2},
\]
with the inequality immediate if $\w_h^n=0$. Iteration bounds the maximum velocity norm by the quantity in parentheses in~\eqref{eq:energybound}.

Next retain the spatial terms and use
\[
2(\w_h^n-\w_h^{n-1},\w_h^n)
=\|\w_h^n\|_{L^2}^2-\|\w_h^{n-1}\|_{L^2}^2
+\|\w_h^n-\w_h^{n-1}\|_{L^2}^2.
\]
Multiply by $\tau$, sum in time, and bound the work of the force by
\begin{align*}
   \sum_{n=1}^M \tau\|\bm f^n\|_{L^2}\|\w_h^n\|_{L^2}
	\lesssim &\sum_{n=1}^M \tau\|\bm f^n\|_{L^2}\Bigl(\sum_{i=1}^n\tau\|\bm f^i\|_{L^2}+\|\w_h^{0}\|_{L^2} \Bigr)
	\lesssim   \Big (\sum_{n=1}^M \tau\|\bm f^n\|_{L^2}\Bigr)^2,
\end{align*}
Combining this estimate with the preceding maximum bound and absorbing the initial-data contribution into the absolute constant proves~\eqref{eq:energybound}, after enlarging the constant if necessary. In particular, for zero forcing, the discrete kinetic energy is nonincreasing at every time step.
\end{proof}

\begin{mytheorem}[Existence]\label{thm:existence}
Let $\nu>0$ and let the penalty parameters satisfy the hypotheses of Theorem~\ref{Theo:energy}. For any $\tau>0$, initial element velocity $\w_h^0$, and prescribed $\bm f^n\in[L^2(\Omega)]^d$, problem~\eqref{eq:fulld} admits at least one solution at every time step. The pressure pair is unique once the velocity pair is fixed.
\end{mytheorem}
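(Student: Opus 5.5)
Existence will be proved one time step at a time. Fix $n\ge1$ and assume $\w_h^{n-1}$ is known. The plan is to restrict the problem to the discrete kernel $\V_{\divop,h}^\star$ from \eqref{def:kernel}, solve the resulting finite-dimensional nonlinear velocity problem with a Brouwer-type argument, and then recover the pressure from the inf-sup condition \eqref{eq_q2}.

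\textbf{Step 1: reduced problem on the kernel.} Find $\w_h^\star\in\V_{\divop,h}^\star$ such that $\mathcal F(\w_h^\star)=0$, where $\mathcal F:\V_{\divop,h}^\star\to(\V_{\divop,h}^\star)'$ is defined by
\[
\langle\mathcal F(\u_h^\star),\v_h^\star\rangle
=\tfrac1\tau(\u_h-\w_h^{n-1},\v_h)+a_h(\u_h^\star,\v_h^\star)+S_h(\u_h^\star;\v_h^\star)
+c_h(\u_h,\u_h^\star,\v_h^\star)+\alpha_3J_h(\u_h^\star,\v_h^\star)-(\bm f^n,\v_h).
\]
On this finite-dimensional space we use the norm $\nnorm{\cdot}_a$.

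\emph{Continuity of $\mathcal F$.} The map is continuous because every term is built from polynomials and from the continuous functions $|\cdot|$ and $|\bm b\cdot\n|$. In particular, \eqref{eq:Slipschitz} gives Lipschitz continuity of $S_h$ on bounded sets. The convective form is continuous in its first argument because $\bm b\mapsto|\bm b\cdot\n|$ is Lipschitz.

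\emph{Coercivity of $\mathcal F$.} Test with $\v_h^\star=\u_h^\star$. Since $\u_h^\star$ lies in the kernel, Lemma~\ref{lem:div-free} shows that $\u_h\in H_0(\divop,\Omega)$ and is elementwise divergence-free. Hence \eqref{tri_eq} applies with $\bm b=\u_h$, and $c_h(\u_h,\u_h^\star,\u_h^\star)\ge0$. Together with \eqref{ah_eq}, \eqref{eq_ts} and $J_h\ge0$ this gives
\[
\langle\mathcal F(\u_h^\star),\u_h^\star\rangle
\ge C_a\nu\nnorm{\u_h^\star}_a^2-\bigl(\tau^{-1}\|\w_h^{n-1}\|_{L^2}+\|\bm f^n\|_{L^2}\bigr)C_P\nnorm{\u_h^\star}_a.
\]
We have dropped $\tau^{-1}\|\u_h\|^2\ge0$ and used the broken Poincaré inequality. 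The right-hand side is positive on the sphere $\nnorm{\u_h^\star}_a=R$ once $R$ is large enough. This is the step that uses $\nu>0$. Alternatively, the $L^2$ term alone controls $\u_h$, and one then argues via norm equivalence in finite dimensions.

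\textbf{Step 2: existence of a velocity.} Identify the dual space with $\V_{\divop,h}^\star$ through a Riesz map. The standard corollary of Brouwer's fixed-point theorem (a continuous map $P$ with $(P(x),x)>0$ on $|x|=R$ has a zero in the ball) then yields a zero $\w_h^{n,\star}$ of $\mathcal F$. Induction over $n$ gives existence at every time step.

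\textbf{Step 3: pressure recovery and uniqueness.} The functional $\ell(\v_h^\star)$, defined as the momentum residual of $\w_h^{n,\star}$ without the pressure term, vanishes on the kernel $\V_{\divop,h}^\star$. The inf-sup condition \eqref{eq_q2} makes $b_h$ surjective onto the annihilator of the kernel. Hence there exists $r_h^{n,\star}\in Q_h^\star$ with $b_h(\v_h^\star,r_h^{n,\star})=\ell(\v_h^\star)$ for all $\v_h^\star\in\V_h^\star$, so $(\w_h^{n,\star},r_h^{n,\star})$ solves \eqref{eq:fulld}. For uniqueness, suppose two pressures give the same velocity. Their difference $q^\star$ satisfies $b_h(\v_h^\star,q^\star)=0$ for all $\v_h^\star$, and \eqref{eq_q2} then forces $\nnorm{q^\star}_q=0$. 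This uses the zero-mean constraint on the element pressure.

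The main obstacle is the continuity of the nonlinear map, specifically that $\mathcal F$ is well defined and continuous with respect to the lagging-free convective argument. The coercivity argument also requires care: \eqref{tri_eq} holds only for kernel fields, so the Brouwer argument must be posed on $\V_{\divop,h}^\star$ rather than on the full product space.
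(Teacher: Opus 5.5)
Your proposal is correct and follows essentially the same route as the paper: a Brouwer argument for the Riesz-represented residual on $\V_{\divop,h}^\star$ with the $\nnorm{\cdot}_a$ inner product, coercivity from \eqref{ah_eq}, \eqref{eq_ts}, \eqref{tri_eq} and the broken Poincar\'e inequality (the paper writes the data as $\bm F^n=\bm f^n+\w_h^{n-1}/\tau$), and then existence and uniqueness of the pressure pair from \eqref{eq_q2} by finite-dimensional duality. One minor caveat on your aside: the element $L^2$ term alone is not a norm on $\V_{\divop,h}^\star$, because every pair $(\bm 0,\widetilde\v_h)$ lies in the kernel, so your $\nu$-free variant also needs the $\alpha_3J_h$ term to control the facet component.
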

\begin{proof}
Fix the previous element velocity and put $\bm F^n=\bm f^n+\w_h^{n-1}/\tau$. On the finite-dimensional space $\V_{\divop,h}^\star$, use the inner product generating $\nnorm{\cdot}_a$. Define the continuous map $\Phi$ by the Riesz representation of
\[
\begin{aligned}
(\Phi(\bm z_h^\star),\v_h^\star)_a={}&
\tau^{-1}(\bm z_h,\v_h)+a_h(\bm z_h^\star,\v_h^\star)
+S_h(\bm z_h^\star;\v_h^\star)\\
&+c_h(\bm z_h,\bm z_h^\star,\v_h^\star)
+\alpha_3J_h(\bm z_h^\star,\v_h^\star)-(\bm F^n,\v_h).
\end{aligned}
\]
Continuity follows from the finite-dimensional polynomial representations, continuity of absolute values, and~\eqref{eq:Slipschitz}. By coercivity,~\eqref{tri_eq}, and the broken Poincar\'e inequality,
\[
(\Phi(\bm z_h^\star),\bm z_h^\star)_a
\ge C_a\nu\nnorm{\bm z_h^\star}_a^2
-C_P\|\bm F^n\|_{L^2}\nnorm{\bm z_h^\star}_a.
\]
This is positive on a sphere of radius
$R>C_P\|\bm F^n\|_{L^2}/(C_a\nu)$. The finite-dimensional Brouwer argument therefore gives a zero of $\Phi$ inside this sphere; see \cite{Girault}.

The residual of the momentum equation on the full velocity space vanishes on $\ker b_h$. Finite-dimensional duality and~\eqref{eq_q2} give a pressure pair that represents this residual, and the same inf-sup condition ensures uniqueness of that pressure pair. Repeating the construction in time establishes existence of a solution sequence.
\end{proof}

\subsection{Conditional uniqueness}
The full nonlinear HDG diffusion operator is not assumed to be globally monotone. Accordingly, the following result is conditional and should not be confused with unconditional energy stability.

\begin{mytheorem}[Conditional uniqueness]\label{thm:unique_full_discrete}
Assume the hypotheses of Theorem~\ref{thm:existence}. Suppose that, at each time level, every possible discrete solution obeys common finite bounds $M_s^n,M_c^n$ such that
\begin{align*}
\max_{T\in\Th}\left(\|\nabla\w_h^n\|_{L^\infty(\partial T)}
+h_T^{-1}\|\jn{\w_h^{n,\star}}\|_{L^\infty(\partial T)}\right)&\le M_s^n,\\
\max_{T\in\Th}\left(\|\w_h^n\|_{L^\infty(T)}+
\|\widetilde\w_h^n\|_{L^\infty(\partial T)}\right)&\le M_c^n.
\end{align*}
There are constants $C_S,C_C>0$, depending on the mesh shape regularity, degree, and fixed penalties but not on $h$, $\tau$, $\nu$, or $\delta$, such that the conditions
\begin{equation}\label{eq:uniqueconditions}
C_S\mus M_s^n\le\frac{C_a\nu}{4},\qquad
\frac{2\tau C_C(M_c^n)^2}{\nu}<1,
\qquad n=1,\ldots,N,
\end{equation}
imply uniqueness of the solution sequence.
\end{mytheorem}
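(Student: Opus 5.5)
Uniqueness is proved by induction on the time level. Suppose two solution sequences share the same element velocity at level $n-1$ (true at $n=0$ since $\w_h^0$ is prescribed). Let $\w_1^{\star},\w_2^{\star}$ be two solutions at level $n$. By Lemma~\ref{lem:div-free} both lie in $\V_{\divop,h}^\star$, so $\bm e^\star:=\w_1^\star-\w_2^\star\in\V_{\divop,h}^\star$. We subtract the two momentum equations and test with $\bm e^\star$. The pressure terms drop out, and the previous-level data cancel. This gives
\[
\tfrac1\tau\|\bm e\|_{L^2}^2+a_h(\bm e^\star,\bm e^\star)+\alpha_3J_h(\bm e^\star,\bm e^\star)
+\bigl[S_h(\w_1^\star;\bm e^\star)-S_h(\w_2^\star;\bm e^\star)\bigr]
+\bigl[c_h(\w_1,\w_1^\star,\bm e^\star)-c_h(\w_2,\w_2^\star,\bm e^\star)\bigr]=0.
\]
By \eqref{ah_eq}, the second term is at least $C_a\nu\nnorm{\bm e^\star}_a^2$, and the $J_h$ term is nonnegative. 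Once $\bm e=0$ at level $n$, the pressure pair is unique by Theorem~\ref{thm:existence}, and the induction proceeds.

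\textbf{Nonlinear diffusion difference.} We do not rely on monotonicity of the full operator. Instead we split the difference into the volume part, the two consistency parts, and the penalty part.
\begin{itemize}
\item The volume term $\mus(\Amap(\nabla\w_1)-\Amap(\nabla\w_2),\nabla\bm e)$ is nonnegative by \eqref{Mono}, and so is the penalty term, since $G\mapsto|G|G$ is also monotone for vectors. Both are dropped, so only the consistency terms remain to be bounded.
\item For the consistency term $\langle|\nabla\w_1|\nabla\w_1\n-|\nabla\w_2|\nabla\w_2\n,\jn{\bm e^\star}\rangle_{\partial T}$, we use \eqref{Conti}. This bounds it by $2M_s^n\|\nabla\bm e\|_{L^2(\partial T)}\|\jn{\bm e^\star}\|_{L^2(\partial T)}$.
\item For the symmetric term $\langle|\nabla\w_1|\jn{\w_1^\star}-|\nabla\w_2|\jn{\w_2^\star},\nabla\bm e\n\rangle$, we write it as $|\nabla\w_1|\jn{\bm e^\star}+(|\nabla\w_1|-|\nabla\w_2|)\jn{\w_2^\star}$. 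The jump bound $\|\jn{\w_2^\star}\|_{L^\infty}\le M_s^nh_T$ then gives a bound by $M_s^n(\|\jn{\bm e^\star}\|+h_T\|\nabla\bm e\|)\|\nabla\bm e\|$ on $\partial T$.
\item With the discrete trace inequality \eqref{inverse-lem}, $h_T^{1/2}\|\nabla\bm e\|_{L^2(\partial T)}\lesssim\|\nabla\bm e\|_{L^2(T)}$. Hence the whole nonlinear difference is bounded below by $-C_S\mus M_s^n\nnorm{\bm e^\star}_a^2$, which fixes $C_S$.
\end{itemize}
Under the first condition in \eqref{eq:uniqueconditions}, this is at most $\tfrac14C_a\nu\nnorm{\bm e^\star}_a^2$ in absolute value.

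\textbf{Convection difference.} We write
\[
c_h(\w_1,\w_1^\star,\bm e^\star)-c_h(\w_2,\w_2^\star,\bm e^\star)=c_h(\w_1,\bm e^\star,\bm e^\star)+\bigl[c_h(\w_1,\w_2^\star,\bm e^\star)-c_h(\w_2,\w_2^\star,\bm e^\star)\bigr].
\]
\begin{itemize}
\item The first term is nonnegative by \eqref{tri_eq}.
\item In the bracket, the volume and central flux parts are linear in the advecting field and equal $-(\w_2\otimes\bm e,\grad\bm e)+\tfrac12\langle(\bm e\cdot\n)(\w_2+\widetilde\w_2),\jn{\bm e^\star}\rangle$. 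The upwind part is bounded using $||\w_1\cdot\n|-|\w_2\cdot\n||\le|\bm e\cdot\n|$.
\item Each piece is bounded by $CM_c^n\|\bm e\|_{L^2}\nnorm{\bm e^\star}_a$, using the discrete trace inequality $h_T^{1/2}\|\bm e\|_{L^2(\partial T)}\lesssim\|\bm e\|_{L^2(T)}$ against $h_T^{-1/2}\|\jn{\bm e^\star}\|$. The upwind piece additionally carries $\|\jn{\w_2^\star}\|_{L^\infty}\le 2M_c^n$.
\item Young's inequality bounds this by $\tfrac14C_a\nu\nnorm{\bm e^\star}_a^2+C_CC_a^{-1}(M_c^n)^2\nu^{-1}\|\bm e\|_{L^2}^2$, where $C_a$ is absorbed into $C_C$.
\end{itemize}

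\textbf{Conclusion.} Collecting the bounds gives
\[
\bigl(\tfrac1\tau-\tfrac{C_C(M_c^n)^2}{\nu}\bigr)\|\bm e\|_{L^2}^2+\tfrac{C_a\nu}{2}\nnorm{\bm e^\star}_a^2\le0.
\]
The second condition in \eqref{eq:uniqueconditions} makes the first coefficient positive, with room to spare from the factor $2$. Hence $\nnorm{\bm e^\star}_a=0$, so $\bm e^\star=0$, since $\nnorm{\cdot}_a$ is a norm.

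The main obstacle is the consistency-term estimate in the nonlinear diffusion. It requires the $L^\infty$ control of both the traces of $\nabla\w_h$ and the scaled jumps, encoded in $M_s^n$. It also requires a careful split so that every resulting factor is a $\nnorm{\cdot}_a$ quantity rather than an $L^3$ one.
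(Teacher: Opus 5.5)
Your proof is correct and follows essentially the same route as the paper. You test the difference equation with $\bm e^\star$, use monotonicity to dispose of the volume and jump parts, and bound the two consistency terms by $C_S\mus M_s^n\nnorm{\bm e^\star}_a^2$ using the same splitting of the symmetrizing term and inverse traces applied only to $\nabla\bm e$. You then bound the convective difference by $CM_c^n\|\bm e\|_{L^2}\nnorm{\bm e^\star}_a$ using the same upwind splitting and absorb it by Young's inequality, as the paper does.

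The differences are cosmetic. You use the induction hypothesis $\bm e^{n-1}=0$ directly, which gives $1/\tau$ rather than the paper's $1/(2\tau)$ from the polarization identity. You split the convective difference by bilinearity in the transported argument, whereas the paper expands $\w_{h,1}\otimes\bm e+\bm e\otimes\w_{h,2}$ and cancels the second piece by integration by parts.
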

\begin{proof}
Compare two solutions at a fixed time level and set $\bm e^\star=\w_{h,1}^\star-\w_{h,2}^\star$. The difference belongs to $\V_{\divop,h}^\star$. For the nonlinear diffusion difference, volume and jump monotonicity give
\begin{equation}\label{eq:uniques}
S_h(\w_{h,1}^\star;\bm e^\star)-S_h(\w_{h,2}^\star;\bm e^\star)
\ge\frac{\mus}{4}\nnorm{\bm e^\star}_s^3
-C_S\mus M_s^n\nnorm{\bm e^\star}_a^2.
\end{equation}
To verify the remaining term,~\eqref{Conti} bounds the flux difference by
$C M_s^n|\nabla\bm e|$ on each facet. In the symmetrizing consistency term, write
\[
|\nabla\w_{h,1}|\jn{\w_{h,1}^\star}
-|\nabla\w_{h,2}|\jn{\w_{h,2}^\star}
=|\nabla\w_{h,1}|\jn{\bm e^\star}
+(|\nabla\w_{h,1}|-|\nabla\w_{h,2}|)\jn{\w_{h,2}^\star}.
\]
The absolute values of the two facet contributions are bounded by
\[
C M_s^n\sum_T\left[
\|\nabla\bm e\|_{L^2(\partial T)}\|\jn{\bm e^\star}\|_{L^2(\partial T)}
+h_T\|\nabla\bm e\|_{L^2(\partial T)}^2\right]
\le C_S M_s^n\nnorm{\bm e^\star}_a^2.
\]
This uses polynomial inverse traces only for $\bm e$ and retains the local element sizes.

For convection, expand
$\w_{h,1}\otimes\w_{h,1}-\w_{h,2}\otimes\w_{h,2}
=\w_{h,1}\otimes\bm e+\bm e\otimes\w_{h,2}$.
The second term cancels its corresponding central facet term by integration by parts. The first volume term is bounded by $C M_c^n\|\bm e\|_{L^2}\|\grad\bm e\|_{L^2}$. Split the upwind difference as
\[
|\w_{h,1}\cdot\n|\jn{\bm e^\star}
+(|\w_{h,1}\cdot\n|-|\w_{h,2}\cdot\n|)\jn{\w_{h,2}^\star}.
\]
The first part is nonnegative when tested with $\jn{\bm e^\star}$, while the second is bounded using
$||\w_{h,1}\cdot\n|-|\w_{h,2}\cdot\n||\le|\bm e|$.
The remaining central and upwind facet terms are therefore bounded by
$C M_c^n\sum_T\|\bm e\|_{L^2(\partial T)}\|\jn{\bm e^\star}\|_{L^2(\partial T)}$.
A weighted Cauchy--Schwarz inequality, polynomial inverse traces, and Young's inequality yield
\begin{equation}\label{eq:uniquec}
\begin{aligned}
&c_h(\w_{h,1},\w_{h,1}^\star,\bm e^\star)
-c_h(\w_{h,2},\w_{h,2}^\star,\bm e^\star) \ge-\frac{C_a\nu}{2}\nnorm{\bm e^\star}_a^2
-\frac{C_C(M_c^n)^2}{\nu}\|\bm e\|_{L^2}^2.
\end{aligned}
\end{equation}
Testing the difference equation with $\bm e^{n,\star}$, using~\eqref{eq:uniques}--\eqref{eq:uniquec}, and retaining the nonnegative terms gives
\[
\begin{aligned}
&\left(\frac1{2\tau}-\frac{C_C(M_c^n)^2}{\nu}\right)\|\bm e^n\|_{L^2}^2
+\frac1{2\tau}\|\bm e^n-\bm e^{n-1}\|_{L^2}^2
+\frac{C_a\nu}{4}\nnorm{\bm e^{n,\star}}_a^2\\
&\qquad+\frac{\mus}{4}\nnorm{\bm e^{n,\star}}_s^3
+\alpha_3\|\jn{\bm e^{n,\star}}\|_{L^2(\partial\Th)}^2
\le\frac1{2\tau}\|\bm e^{n-1}\|_{L^2}^2.
\end{aligned}
\]
The first coefficient is positive by~\eqref{eq:uniqueconditions}. Since the initial element velocities agree, induction proves equality of both the element and facet velocities. The inf-sup condition then proves equality of the pressure pairs.
\end{proof}

\begin{myremark}
The common-envelope formulation avoids applying a bound for one candidate solution to an unspecified pair of solutions. The conditions~\eqref{eq:uniqueconditions} are sufficient, not necessary, and are viscosity-dependent. The Reynolds-semi-robust error estimate below applies to any solution sequence and does not rely on these uniqueness conditions.
\end{myremark}

\section{Velocity error analysis}\label{Sec:err}
The analysis is performed on the discrete divergence-free kernel. This eliminates pressure terms without estimating a pressure interpolation error. All variational forms are understood with exact integration. Errors introduced by numerical quadrature or inexact nonlinear solves would require additional residual terms.

\begin{myassumption}[Regularity for nodal error estimates]\label{Assum1}
Let $1\le m\le k$. Assume that a solution of~\eqref{Smagorinsky_eq} exists with
\begin{equation}\label{eq:regularity}
\begin{aligned}
\w\in{}&H^2(0,T;[L^2(\Omega)]^d)
\cap H^1(0,T;[H^{m+1}(\Omega)]^d)\\
&\cap C\!\left([0,T];[W^{m+1,3}(\Omega)]^d
\cap[W^{1,\infty}(\Omega)]^d\right),\\
r\in{}&C([0,T];H^1(\Omega)\cap L^2_0(\Omega)),
\end{aligned}
\end{equation}
with the prescribed zero velocity trace and incompressibility, and with well-defined nodal forcing $\bm f^n=\bm f(t_n)$. The PDE and its consistency identity are assumed valid at the time nodes.
\end{myassumption}

The continuity assumptions in~\eqref{eq:regularity} provide bounded spatial norms at every time node. In particular, they justify the sums of cubed $W^{m+1,3}$ approximation errors below. An $L^2(0,T;W^{m+1,3})$ assumption alone would not control such nodal cubic sums. These are sufficient assumptions for the present proof; no claim of their necessity is made.

\subsection{Error splitting and a kernel error equation}
Set
\begin{align*}
\bm p^n&=\Pi_{RT}\w^n,&\widetilde{\bm p}^n&=\Pi_F\gamma_h\w^n,
&\bm p^{n,\star}&=(\bm p^n,\widetilde{\bm p}^n),\\
\bm\zeta^n&=\w^n-\bm p^n,&\widetilde{\bm\zeta}^n&=\gamma_h\w^n-\widetilde{\bm p}^n,
&\bm\zeta^{n,\star}&=(\bm\zeta^n,\widetilde{\bm\zeta}^n),\\
\bm\xi^n&=\w_h^n-\bm p^n,&\widetilde{\bm\xi}^n&=\widetilde\w_h^n-\widetilde{\bm p}^n,
&\bm\xi^{n,\star}&=(\bm\xi^n,\widetilde{\bm\xi}^n).
\end{align*}
Then $\w^n-\w_h^n=\bm\zeta^n-\bm\xi^n$, and
\begin{equation}\label{eq:errorkernel}
\bm\xi^{n,\star}\in\V_{\divop,h}^\star,\qquad
\bm\zeta^n,\bm\xi^n\in H_0(\divop,\Omega),\qquad
\nabla\cdot\bm\zeta^n=\nabla\cdot\bm\xi^n=0.
\end{equation}
Moreover, $\bm\xi^0=0$ and $\jn{\bm p^{n,\star}}=-\jn{\bm\zeta^{n,\star}}$. We use the abbreviations
$J_\xi^n=\jn{\bm\xi^{n,\star}}$, $J_p^n=\jn{\bm p^{n,\star}}$, and
$J_\zeta^n=\jn{\bm\zeta^{n,\star}}$.

Define the monotone part of the nonlinear difference by
\begin{equation}\label{eq:error1}
\begin{aligned}
\mathcal M_h^n(\v_h^\star)=\mus\sum_T\Bigl[&
(\Amap(\nabla\w_h^n)-\Amap(\nabla\bm p^n),\nabla\v_h)_T+\tfrac{\alpha_2}{h_T^2}
\langle\Amap(\jn{\w_h^{n,\star}})-\Amap(J_p^n),\jn{\v_h^\star}\rangle_{\partial T}\Bigr].
\end{aligned}
\end{equation}
By~\eqref{Mono},
\begin{equation}\label{eq:monotoneerror}
\mathcal M_h^n(\bm\xi^{n,\star})\ge\tfrac{\mus}{4}\nnorm{\bm\xi^{n,\star}}_s^3.
\end{equation}
For all $\v_h^\star\in\V_{\divop,h}^\star$, subtracting~\eqref{consist} from~\eqref{eq:fulld} and rearranging gives
\begin{equation}\label{eq:error}
\begin{aligned}
(\dt\bm\xi^n,\v_h)+a_h(\bm\xi^{n,\star},\v_h^\star)
+\mathcal M_h^n(\v_h^\star)
+c_h(\w_h^n,\bm\xi^{n,\star},\v_h^\star)&\\
+\alpha_3J_h(\bm\xi^{n,\star},\v_h^\star)
&=\sum_{i\in\{1,2,4,5,6\}}\mathcal D_i^n(\v_h^\star),
\end{aligned}
\end{equation}
where
\begin{align}
\mathcal D_1^n(\v_h^\star)&=(\dt\bm\zeta^n+\partial_t\w^n-\dt\w^n,\v_h),\label{def:D1}\\
\mathcal D_2^n(\v_h^\star)&=a_h(\bm\zeta^{n,\star},\v_h^\star),\label{def:D2}\\
\mathcal D_4^n(\v_h^\star)&=c_h(\w^n,\w^{n,\star},\v_h^\star)
-c_h(\w_h^n,\w_h^{n,\star},\v_h^\star)
+c_h(\w_h^n,\bm\xi^{n,\star},\v_h^\star),\label{def:D4}\\
\mathcal D_5^n(\v_h^\star)&=S_h(\w^{n,\star};\v_h^\star)
-S_h(\w_h^{n,\star};\v_h^\star)+\mathcal M_h^n(\v_h^\star),\label{def:D5}\\
\mathcal D_6^n(\v_h^\star)&=\alpha_3J_h(\bm\zeta^{n,\star},\v_h^\star).\label{def:D6}
\end{align}
There is no $\mathcal D_3$ pressure residual: for any kernel test, both the discrete pressure term and the exact pressure term vanish. For the latter, normal continuity, zero normal boundary trace, and elementwise zero divergence give
$b_h(\v_h^\star,(r^n,\gamma_h r^n))=0$. This is the precise pressure cancellation used in the proof.

For later use, define at each time level
\begin{equation}\label{def:KU}
K_T^n=\|\nabla\w^n\|_{L^\infty(T)},\qquad
U_T^n=\|\w^n\|_{L^\infty(T)},\qquad K_n=\max_TK_T^n.
\end{equation}

\subsection{Temporal, linear, and convective residuals}
\begin{mylemma}\label{lem:simpleresiduals}
For every $\varepsilon>0$,
\begin{align}
|\mathcal D_1^n(\bm\xi^{n,\star})|
&\le\varepsilon\|\bm\xi^n\|_{L^2}^2+C_\varepsilon\mathcal T_n,\label{eq:D1}\\
|\mathcal D_2^n(\bm\xi^{n,\star})|
&\le\tfrac{C_a\nu}{2}\nnorm{\bm\xi^{n,\star}}_a^2
+C\nu\nnorm{\bm\zeta^{n,\star}}_{a'}^2,\label{eq:D2}\\
|\mathcal D_6^n(\bm\xi^{n,\star})|
&\le\tfrac{\alpha_3}{4}\|J_\xi^n\|_{L^2(\partial\Th)}^2
+C\alpha_3\|J_\zeta^n\|_{L^2(\partial\Th)}^2,\label{eq:D6}
\end{align}
where
\begin{equation}\label{def:temporalresidual}
\mathcal T_n=\tau\int_{t_{n-1}}^{t_n}\|\partial_{tt}\w(t)\|_{L^2}^2dt
+\frac1\tau\int_{t_{n-1}}^{t_n}\|\partial_t\bm\zeta(t)\|_{L^2}^2dt.
\end{equation}
\end{mylemma}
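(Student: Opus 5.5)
The three bounds are independent, and each follows from Cauchy--Schwarz, Young's inequality, and, for $\mathcal D_1^n$, a Taylor-remainder argument.

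\emph{Bound for $\mathcal D_1^n$.} Start from
\[
|\mathcal D_1^n(\bm\xi^{n,\star})|\le\bigl(\|\dt\bm\zeta^n\|_{L^2}+\|\partial_t\w^n-\dt\w^n\|_{L^2}\bigr)\|\bm\xi^n\|_{L^2}.
\]
Young's inequality gives $\varepsilon\|\bm\xi^n\|_{L^2}^2$ plus $C_\varepsilon$ times the two squared residuals. For the truncation term, use the integral form of Taylor's theorem:
\[
\partial_t\w^n-\dt\w^n=\frac1\tau\int_{t_{n-1}}^{t_n}(t-t_{n-1})\,\partial_{tt}\w(t)\,dt .
\]
Minkowski's inequality for Bochner integrals and Cauchy--Schwarz in time then give
\[
\|\partial_t\w^n-\dt\w^n\|_{L^2}^2\le\frac1{\tau^2}\int_{t_{n-1}}^{t_n}(t-t_{n-1})^2\,dt\int_{t_{n-1}}^{t_n}\|\partial_{tt}\w\|_{L^2}^2\,dt=\frac\tau3\int_{t_{n-1}}^{t_n}\|\partial_{tt}\w\|_{L^2}^2\,dt .
\]
For the interpolation term, $\Pi_{RT}$ is linear and time-independent, so $\partial_t\bm\zeta=\partial_t\w-\Pi_{RT}\partial_t\w$ and $\dt\bm\zeta^n=\tau^{-1}\int_{t_{n-1}}^{t_n}\partial_t\bm\zeta\,dt$. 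Cauchy--Schwarz in time gives $\|\dt\bm\zeta^n\|_{L^2}^2\le\tau^{-1}\int_{t_{n-1}}^{t_n}\|\partial_t\bm\zeta\|_{L^2}^2\,dt$. This step needs $\w\in H^1(0,T;H^{m+1})$, so that $\bm\zeta$ is $H^1$ in time with values in $L^2$. Together these yield \eqref{eq:D1} with $\mathcal T_n$ as in \eqref{def:temporalresidual}.

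\emph{Bound for $\mathcal D_2^n$.} We have $\bm\zeta^{n,\star}\in\V^\star(h)$: indeed $\w^n\in\V_{\rm reg}$, $\bm p^n\in\V_h$, and the facet component lies in $\widetilde\V_h+\gamma_h\V_{\rm reg}$. So the continuity bound \eqref{ah_bound} applies with $\u^\star=\bm\zeta^{n,\star}$ and $\v_h^\star=\bm\xi^{n,\star}$:
\[
|\mathcal D_2^n(\bm\xi^{n,\star})|\le C\nu\nnorm{\bm\zeta^{n,\star}}_{a'}\nnorm{\bm\xi^{n,\star}}_a .
\]
Young's inequality with weight $C_a/2$ on the $\xi$-term gives \eqref{eq:D2}.

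\emph{Bound for $\mathcal D_6^n$.} By definition $J_h(\bm\zeta^{n,\star},\bm\xi^{n,\star})=\langle J_\zeta^n,J_\xi^n\rangle_{\partial\Th}$. Cauchy--Schwarz on $\partial\Th$ followed by $ab\le\frac14a^2+b^2$ gives \eqref{eq:D6} with $C=1$.

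The only step needing care is justifying the time manipulations for $\bm\zeta$ in the $\mathcal D_1^n$ bound: commuting $\Pi_{RT}$ with $\partial_t$ and using the fundamental theorem of calculus in $L^2$. This is legitimate because $\Pi_{RT}$ is a bounded linear map on $H^{m+1}$ and $\w\in H^1(0,T;[H^{m+1}(\Omega)]^d)$ by Assumption~\ref{Assum1}. The other two bounds are routine.
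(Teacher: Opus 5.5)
Your proposal is correct and follows the paper's own proof. For $\mathcal D_1^n$ you use the Taylor-remainder and time-averaging identities with Cauchy--Schwarz and Young, for $\mathcal D_2^n$ the continuity bound \eqref{ah_bound} plus Young, and for $\mathcal D_6^n$ Cauchy--Schwarz plus Young. Your explicit constants (the factor $\tau/3$, and $C=1$ for $\mathcal D_6^n$) and your check that $\bm\zeta^{n,\star}\in\V^\star(h)$ fill in details the paper leaves implicit.
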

\begin{proof}
The identities
\[
\partial_t\w^n-\dt\w^n
=\frac1\tau\int_{t_{n-1}}^{t_n}(t-t_{n-1})\partial_{tt}\w(t)\,dt,
\qquad
\dt\bm\zeta^n=\frac1\tau\int_{t_{n-1}}^{t_n}\partial_t\bm\zeta(t)\,dt
\]
give~\eqref{eq:D1} by Cauchy--Schwarz and Young's inequality. The time-independent interpolation operator commutes with the time derivative in the indicated spaces. The other two estimates follow from~\eqref{ah_bound} and Young's inequality. In particular, the temporal truncation residual contributes $\tau^2\|\partial_{tt}\w\|_{L^2(0,T;L^2)}^2$ after multiplication by $\tau$ and summation.
\end{proof}

\begin{mylemma}[Convective residual]\label{lemma:D4}
Under Assumption~\ref{Assum1},
\begin{equation}\label{eq:D4}
\begin{aligned}
\mathcal D_4^n(\bm\xi^{n,\star})\le{}&
C(1+K_n)\|\bm\xi^n\|_{L^2}^2
+C\sum_T(K_T^n)^2\|\bm\zeta^n\|_{L^2(T)}^2\\
&+C\sum_T(U_T^n+h_TK_T^n)
\left(\|\bm\zeta^n\|_{L^2(\partial T)}^2+
\|\widetilde{\bm\zeta}^n\|_{L^2(\partial T)}^2\right)\\
&+\tfrac14\langle|\w_h^n\cdot\n|,|J_\xi^n|^2\rangle_{\partial\Th}.
\end{aligned}
\end{equation}
The constant contains no inverse power of $\nu$.
\end{mylemma}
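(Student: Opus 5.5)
The plan is to rewrite $\mathcal D_4^n$ so that only the exact velocity and the interpolation error $\bm\zeta$ carry derivatives, and the discrete error $\bm\xi$ enters only through $L^2$ norms or the upwind jump term. Since $\w^n-\w_h^n=\bm\zeta^n-\bm\xi^n$ and $\w^{n,\star}-\w_h^{n,\star}=\bm\zeta^{n,\star}-\bm\xi^{n,\star}$, and $c_h$ is linear in its second argument for a fixed first argument, the definition \eqref{def:D4} gives
\[
\mathcal D_4^n(\v_h^\star)=c_h(\w^n,\w^{n,\star},\v_h^\star)-c_h(\w_h^n,\w^{n,\star},\v_h^\star)
+c_h(\w_h^n,\bm\zeta^{n,\star},\v_h^\star).
\]
Both $\w^n$ and $\w_h^n$ lie in $H_0(\divop,\Omega)$ with elementwise zero divergence by \eqref{eq:errorkernel}, so the integrated-by-parts form \eqref{eq:ch1} applies throughout. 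For the first difference, $\jn{\w^{n,\star}}=0$, so only the volume term survives. It equals $(((\bm\zeta^n-\bm\xi^n)\cdot\nabla)\w^n,\bm\xi^n)$, which is bounded by $K_n\|\bm\xi^n\|_{L^2}^2+\sum_T K_T^n\|\bm\zeta^n\|_{L^2(T)}\|\bm\xi^n\|_{L^2(T)}$. Young's inequality then yields the terms $C(1+K_n)\|\bm\xi^n\|^2+C\sum_T(K_T^n)^2\|\bm\zeta^n\|_{L^2(T)}^2$.

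The second piece, $c_h(\w_h^n,\bm\zeta^{n,\star},\bm\xi^{n,\star})$, is the main obstacle. Its convective field $\w_h^n$ is not bounded uniformly, and $\nu$ may not be used. I will work in the original form \eqref{def:ch}, where the derivative falls on the test function: $-(\bm\zeta^n\otimes\w_h^n,\grad\bm\xi^n)$. In this term I write $\w_h^n=\w^n-\bm\zeta^n+\bm\xi^n$, and I will also consider transferring the volume term back by integration by parts. The cleanest route is to use the identity that holds for $\bm b=\w_h^n$ divergence-free and normal-continuous:
\[
-(\bm\zeta\otimes\bm b,\grad\bm\xi)_{\Th}+\tfrac12\langle(\bm b\cdot\n)(\bm\zeta+\widetilde{\bm\zeta}),\jn{\bm\xi^\star}\rangle_{\partial\Th}
=-(\bm\zeta\otimes\bm b,\grad\bm\xi)_{\Th}+\langle(\bm b\cdot\n)\widetilde{\bm\zeta},\jn{\bm\xi^\star}\rangle_{\partial\Th}+\tfrac12\langle(\bm b\cdot\n)J_\zeta,\jn{\bm\xi^\star}\rangle_{\partial\Th}.
\]
Here one exploits the RT orthogonality \eqref{eq:RTdef} for the part $\w^n$ of $\bm b$. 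Approximating $\w^n$ by a piecewise constant $\bar\w_T$ on each element gives $\bar\w_T\cdot\grad\bm\xi^n\in[P_{k-1}]^d$, so $(\bm\zeta^n\otimes\bar\w_T,\grad\bm\xi^n)_T=0$. The remainder is bounded by $h_TK_T^n\|\bm\zeta^n\|_{L^2(T)}\|\nabla\bm\xi^n\|_{L^2(T)}$, and an inverse estimate \eqref{inverse-lem} converts this to $K_T^n\|\bm\zeta^n\|_{L^2(T)}\|\bm\xi^n\|_{L^2(T)}$. Similarly, the facet moments of $\bm\zeta\cdot\n$ against $P_k$ vanish, and $\widetilde{\bm\zeta}$ is $L^2(F)$-orthogonal to $P_k(F)$. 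This lets me replace $\bm b\cdot\n$ by its deviation from a constant, paying $h_TK_T^n$. The pieces of $\bm b$ that equal $\bm\xi^n-\bm\zeta^n$ give products that are cubic in small quantities. I expect to handle these by using the $\bm\xi$ part through $\w_h^n$ in the weighted upwind term, as described next.

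All facet terms carrying the weight $|\w_h^n\cdot\n|$ are split by weighted Young's inequality. One half is absorbed by $\tfrac14\langle|\w_h^n\cdot\n|,|J_\xi^n|^2\rangle$. The other half is $\langle|\w_h^n\cdot\n|,|\bm\zeta|^2+|\widetilde{\bm\zeta}|^2\rangle$. In it, $|\w_h^n\cdot\n|\le U_T^n+|\bm\zeta^n|+|\bm\xi^n|$, and the term $|\bm\zeta^n|$ is bounded by $Ch_TK_T^n$ through \eqref{eq:RTbound3}, which produces the weight $U_T^n+h_TK_T^n$. The contribution of $|\bm\xi^n|$ is the delicate one. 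I would first test whether $\|\bm\zeta\|_{L^\infty(\partial T)}\lesssim h_TK_T^n$, together with the trace inverse inequality $\|\bm\xi^n\|_{L^2(\partial T)}\lesssim h_T^{-1/2}\|\bm\xi^n\|_{L^2(T)}$, closes this as $K_n\|\bm\xi^n\|^2+(K_T^n)^2\|\bm\zeta\|_{L^2(T)}^2$-type terms. Collecting all pieces gives \eqref{eq:D4}, and no step uses $\nu$.
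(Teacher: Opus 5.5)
\paragraph{Where the route matches and where it needs repair.}
Your splitting $\mathcal D_4^n=[c_h(\w^n,\w^{n,\star},\cdot)-c_h(\w_h^n,\w^{n,\star},\cdot)]+c_h(\w_h^n,\bm\zeta^{n,\star},\cdot)$ is the paper's route. So are the bound for the first difference, the RT-moment identity $((\overline{\w}_T\cdot\nabla)\bm\xi,\bm\zeta)_T=0$, and the weighted Young splitting of the facet terms with weight $|\w_h^n\cdot\n|$.

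The gap is in the volume term $-((\w_h\cdot\grad)\bm\xi,\bm\zeta)_{\Th}$ (time index suppressed). You treat the pieces coming from $\w_h-\w=\bm\xi-\bm\zeta$ as ``cubic in small quantities'' and propose to handle them ``through the weighted upwind term''. That mechanism does not work, for two reasons. A volume term cannot be absorbed by the facet form $\langle|\w_h\cdot\n|,|J_\xi|^2\rangle_{\partial\Th}$. And no a priori $L^\infty$ smallness of $\bm\xi$ is available: the Gronwall argument needs a bound quadratic in $\bm\xi$ whose coefficient depends only on the data. What closes these terms is the $L^\infty$ interpolation bound \eqref{eq:RTbound3} combined with the inverse estimate \eqref{inverse-lem}:
\[
|((\bm\xi\cdot\nabla)\bm\xi,\bm\zeta)_T|\le\|\bm\zeta\|_{L^\infty(T)}\|\bm\xi\|_{L^2(T)}\|\nabla\bm\xi\|_{L^2(T)}\lesssim K_T\|\bm\xi\|_{L^2(T)}^2,
\qquad
|((\bm\zeta\cdot\nabla)\bm\xi,\bm\zeta)_T|\lesssim K_T\|\bm\xi\|_{L^2(T)}\|\bm\zeta\|_{L^2(T)}.
\]
These give exactly the $K_n\|\bm\xi\|^2$ and $(K_T)^2\|\bm\zeta\|_{L^2(T)}^2$ contributions in \eqref{eq:D4}.

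\paragraph{Two corrections on the facet side.}
First, drop the step that replaces $\bm b\cdot\n$ by its deviation from $\overline{\w}_T\cdot\n$ using the orthogonality of $\widetilde{\bm\zeta}$. It destroys the weight $|\w_h\cdot\n|$ and leaves $J_\xi$ paired with an $O(h_TK_T)$ coefficient. But $\|J_\xi\|_{L^2(\partial T)}$ contains the facet unknown $\widetilde{\bm\xi}$ and is not controlled by anything on the right of \eqref{eq:D4}; only the $\alpha_3$-term available in Theorem~\ref{the0::err1} could absorb it. Weighted Young applied directly to $|\w_h\cdot\n|\,|\bm\zeta\pm\widetilde{\bm\zeta}|\,|J_\xi|$ needs no orthogonality.

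Second, the $\bm\xi$-part of the weight does not close into $(K_T)^2\|\bm\zeta\|_{L^2(T)}^2$. The field $\widetilde{\bm\zeta}$ has no volume counterpart, and $\bm\zeta$ is not a polynomial, so no inverse trace applies to it. With $Z_T^2=\|\bm\zeta\|_{L^2(\partial T)}^2+\|\widetilde{\bm\zeta}\|_{L^2(\partial T)}^2$ one instead gets
\[
\int_{\partial T}|\bm\xi|\bigl(|\bm\zeta|^2+|\widetilde{\bm\zeta}|^2\bigr)\lesssim h_TK_T\|\bm\xi\|_{L^2(\partial T)}Z_T\lesssim K_T\|\bm\xi\|_{L^2(T)}^2+h_TK_TZ_T^2 .
\]
This belongs in the third line of \eqref{eq:D4}. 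Here the inverse trace is applied only to the polynomial $\bm\xi$.
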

\begin{proof}
Suppress the superscript $n$. Since $\jn{\w^\star}=0$, the upwind part of $c_h(\bm b,\w^\star,\v_h^\star)$ vanishes. Therefore the difference in its advecting argument is linear in this particular expression. Using~\eqref{eq:ch1},
\begin{equation}\label{eq:convsplit}
\begin{aligned}
\mathcal D_4(\bm\xi^\star)={}&
(((\bm\zeta-\bm\xi)\cdot\nabla)\w,\bm\xi)_{\Th}
-((\w_h\cdot\grad)\bm\xi,\bm\zeta)_{\Th}\\
&+\tfrac12\langle(\w_h\cdot\n)(\bm\zeta+\widetilde{\bm\zeta}),J_\xi\rangle_{\partial\Th}
+\tfrac12\langle|\w_h\cdot\n|(\bm\zeta-\widetilde{\bm\zeta}),J_\xi\rangle_{\partial\Th}.
\end{aligned}
\end{equation}
The first volume term is bounded by
\[
C(1+K_n)\|\bm\xi\|_{L^2}^2
+C\sum_TK_T^2\|\bm\zeta\|_{L^2(T)}^2.
\]
For the second term, let $\overline{\w}_T$ be the element mean of $\w$. The RT moment property~\eqref{eq:RTdef} gives
$((\overline{\w}_T\cdot\nabla)\bm\xi,\bm\zeta)_T=0$.
Since $\w_h=\w-\bm\zeta+\bm\xi$, the remaining terms are bounded using
\[
\|\bm\zeta\|_{L^\infty(T)}+
\|\w-\overline{\w}_T\|_{L^\infty(T)}\lesssim h_TK_T,
\qquad \|\nabla\bm\xi\|_{L^2(T)}\lesssim h_T^{-1}\|\bm\xi\|_{L^2(T)}.
\]
In particular,
\[
|((\bm\xi\cdot\nabla)\bm\xi,\bm\zeta)_T|
\lesssim K_T\|\bm\xi\|_{L^2(T)}^2.
\]
The other pieces are bounded by
$C K_T\|\bm\xi\|_{L^2(T)}\|\bm\zeta\|_{L^2(T)}$ and then by Young's inequality. This retains the $K_T\|\bm\xi\|^2$ term rather than incorrectly absorbing it into an arbitrary small constant.

For each of the two facet terms in~\eqref{eq:convsplit}, a weighted Young inequality absorbs one eighth of the upwind quadratic form and leaves a constant times
\[
\sum_T\int_{\partial T}|\w_h\cdot\n|
\bigl(|\bm\zeta|^2+|\widetilde{\bm\zeta}|^2\bigr).
\]
Use $\w_h=\bm p+\bm\xi$, $\|\bm p\|_{L^\infty(\partial T)}\lesssim U_T+h_TK_T$, and
\[
\|\bm\zeta\|_{L^\infty(\partial T)}+
\|\widetilde{\bm\zeta}\|_{L^\infty(\partial T)}\lesssim h_TK_T.
\]
If $Z_T^2=\|\bm\zeta\|_{L^2(\partial T)}^2+
\|\widetilde{\bm\zeta}\|_{L^2(\partial T)}^2$, the term with $\bm\xi$ is at most
\[
C h_TK_T\|\bm\xi\|_{L^2(\partial T)}Z_T
\le C K_T\|\bm\xi\|_{L^2(T)}^2+C h_TK_T Z_T^2.
\]
Here the inverse trace is applied to the discrete polynomial $\bm\xi$, not to an interpolation error. Summing the estimates proves~\eqref{eq:D4}.
\end{proof}

\subsection{Nonlinear eddy-diffusion residual}
The local approximation quantity required to control nonlinear facet fluxes is
\begin{equation}\label{def:Es}
\mathcal E_{s,T}^n=
\left(\|\nabla\bm\zeta^n\|_{L^3(T)}^3
+h_T\|\nabla\bm\zeta^n\|_{L^3(\partial T)}^3
+\tfrac{\alpha_2}{h_T^2}\|J_\zeta^n\|_{L^3(\partial T)}^3\right)^{1/3}.
\end{equation}
The gradient trace is deliberately retained. By Lemma~\ref{RT-lem},
\begin{equation}\label{eq:Esapprox}
\mathcal E_{s,T}^n\lesssim h_T^m|\w^n|_{W^{m+1,3}(T)}.
\end{equation}

\begin{mylemma}[Nonlinear residual]\label{lemma:D5}
Let $\alpha_3>0$ be fixed and choose $\alpha_2$ sufficiently large. Under Assumption~\ref{Assum1},
\begin{equation}\label{eq:D5}
\begin{aligned}
\mathcal D_5^n(\bm\xi^{n,\star})\le{}&
\tfrac{\mus}{8}\nnorm{\bm\xi^{n,\star}}_s^3
+\tfrac{\alpha_3}{4}\|J_\xi^n\|_{L^2(\partial\Th)}^2
+\frac{C\mus^2}{\alpha_3}\sum_T h_T^{-3}(K_T^n)^2\|\bm\xi^n\|_{L^2(T)}^2\\
&+C\mus\sum_T\left[(\mathcal E_{s,T}^n)^3
+|T|^{1/2}(K_T^n)^{3/2}(\mathcal E_{s,T}^n)^{3/2}\right].
\end{aligned}
\end{equation}
The constants and the penalty threshold do not involve inverse powers of $\nu$.
\end{mylemma}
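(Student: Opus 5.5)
We first expand $\mathcal D_5^n(\bm\xi^{n,\star})$ using the definitions of $S_h$ and $\mathcal M_h^n$. Since $\jn{\w^{n,\star}}=0$, the facet terms of $S_h(\w^{n,\star};\cdot)$ that contain the exact jump vanish. We then add and subtract $\Amap(\nabla\bm p^n)$ and $\Amap(J_p^n)$. This leaves
\[
\mathcal D_5^n(\bm\xi^{n,\star})=\mus\sum_T\Bigl[(\Amap(\nabla\w^n)-\Amap(\nabla\bm p^n),\nabla\bm\xi^n)_T-\tfrac{\alpha_2}{h_T^2}\langle\Amap(J_p^n),J_\xi^n\rangle_{\partial T}\Bigr]+R_F,
\]
where $R_F$ collects the two consistency facet terms:
\[
-\langle\Amap(\nabla\w^n)\n-\Amap(\nabla\w_h^n)\n,J_\xi^n\rangle_{\partial T}
\qquad\text{and}\qquad
\langle|\nabla\w_h^n|\jn{\w_h^{n,\star}},\nabla\bm\xi^n\n\rangle_{\partial T}.
\]
Here $J_p^n=-J_\zeta^n$ and $\jn{\w_h^{n,\star}}=J_\xi^n-J_\zeta^n$.

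\textbf{Volume and penalty differences.} For the volume difference we use \eqref{Conti}: $|\Amap(\nabla\w)-\Amap(\nabla\bm p)|\le(2|\nabla\w|+|\nabla\bm\zeta|)|\nabla\bm\zeta|$. This yields $CK_T\|\nabla\bm\zeta\|_{L^2(T)}\|\nabla\bm\xi\|_{L^2(T)}+C\|\nabla\bm\zeta\|_{L^3}^2\|\nabla\bm\xi\|_{L^3}$. The cubic part is split by Young's inequality into $\epsilon\|\nabla\bm\xi\|_{L^3}^3+C\mathcal E_{s,T}^3$.

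For the $K_T$ part we write $\|\nabla\bm\zeta\|_{L^2(T)}\le|T|^{1/6}\|\nabla\bm\zeta\|_{L^3(T)}$ and apply the inverse estimate $\|\nabla\bm\xi\|_{L^2(T)}\lesssim h_T^{-1}\|\bm\xi\|_{L^2(T)}$. A Young split then produces
\[
\frac{\mus^2}{\alpha_3}h_T^{-3}K_T^2\|\bm\xi\|_{L^2(T)}^2
\]
plus a term that must be arranged into the form $|T|^{1/2}K_T^{3/2}\mathcal E^{3/2}$. An alternative is a three-factor Young split in which the $L^3$ norm of $\nabla\bm\xi$ is absorbed. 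I would aim for the latter: since $|\Amap(\nabla\w)-\Amap(\nabla\bm p)|\lesssim K_T|\nabla\bm\zeta|+|\nabla\bm\zeta|^2$, write $K_T|\nabla\bm\zeta||\nabla\bm\xi|$, then use Hölder with $|T|$ and split as $(K_T^{3/2}|T|^{1/2}\mathcal E^{3/2})\cdot(\ldots)$.

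The penalty term $\tfrac{\alpha_2}{h_T^2}\langle|J_\zeta|J_\zeta,J_\xi\rangle$ is handled by Hölder in $L^3$ and Young's inequality. It gives $\epsilon\tfrac{\alpha_2}{h_T^2}\|J_\xi\|_{L^3}^3+C\mathcal E_{s,T}^3$.

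\textbf{Consistency facet terms (main obstacle).} The difficulty is that $\nabla\w_h$ on $\partial T$ is not controlled by any viscosity-independent discrete norm. We therefore write $\nabla\w_h=\nabla\w-\nabla\bm\zeta+\nabla\bm\xi$ and apply \eqref{Conti} to the flux difference. This gives facet integrands of size $(K_T+|\nabla\bm\zeta|+|\nabla\bm\xi|)(|\nabla\bm\zeta|+|\nabla\bm\xi|)|J_\xi|$. Each contribution is treated as follows.
\begin{itemize}
\item Pieces cubic in $\bm\xi$ are bounded via the inverse trace $h_T^{1/3}\|\nabla\bm\xi\|_{L^3(\partial T)}\lesssim\|\nabla\bm\xi\|_{L^3(T)}$ by $C\alpha_2^{-1/3}\nnorm{\bm\xi}_s^3$. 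They are absorbed for large $\alpha_2$, exactly as in Lemma~\ref{lemma_sta_s_h}.
\item Pieces containing $\nabla\bm\zeta$ traces are kept with the weight $h_T\|\nabla\bm\zeta\|^3_{L^3(\partial T)}$ in $\mathcal E_{s,T}$. No inverse inequality is applied to them.
\item The linear piece $K_T\langle|\nabla\bm\xi|,|J_\xi|\rangle$ cannot be absorbed into the cubic norm without a lower-order remainder. We bound it by $\tfrac{\alpha_3}{4}\|J_\xi\|^2_{L^2(\partial T)}+C\mus^2\alpha_3^{-1}K_T^2\|\nabla\bm\xi\|^2_{L^2(\partial T)}$. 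The inverse trace and inverse estimate then give $h_T^{-3}K_T^2\|\bm\xi\|_{L^2(T)}^2$, which is the reason for the $\alpha_3$ term and the $\mus^2/\alpha_3$ factor.
\item In the symmetrizing term, $|\nabla\w_h|\,|J_\xi-J_\zeta|\,|\nabla\bm\xi|$ is expanded in the same way. The $K_T|J_\xi||\nabla\bm\xi|$ part is treated identically, and the $J_\zeta$ parts go into $\mathcal E$.
\end{itemize}
Finally, we collect all $\epsilon$-terms into $\tfrac{\mus}{8}\nnorm{\bm\xi}_s^3$ by choosing $\epsilon$ small and $\alpha_2$ large. No factor of $\nu$ appears at any step.
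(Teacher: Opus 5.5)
Your proposal is correct and is essentially the paper's argument. The paper pivots at the interpolant, writing $\mathcal D_5^n=\mathcal R_s+\mathcal B_s$ with $\mathcal R_s=S_h(\w^{n,\star};\cdot)-S_h(\bm p^{n,\star};\cdot)$, whereas you expand $\nabla\w_h^n=\nabla\w^n-\nabla\bm\zeta^n+\nabla\bm\xi^n$ directly in the two consistency facet terms; this yields the same terms, handled with the same tools (\eqref{Conti}, $L^3$-H\"older with the $|T|^{1/3}$ factor, inverse traces only on $\nabla\bm\xi^n$, large $\alpha_2$ for $|\nabla\bm\xi^n|^2|J_\xi^n|$, and $\alpha_3$ with inverse estimates for $K_T^n|\nabla\bm\xi^n||J_\xi^n|$). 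Your final choice for the volume term, the three-factor $L^3$ split, is the one that works: the $L^2$/inverse-estimate alternative you mention first leaves a remainder $\alpha_3 h_T|T|^{1/3}\|\nabla\bm\zeta^n\|_{L^3(T)}^2$ with no $\mus$ factor, which is not of the form in \eqref{eq:D5}.
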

\begin{proof}
Suppress the time index and write $P=\nabla\bm p$, $X=\nabla\bm\xi$, and $J_p=\jn{\bm p^\star}$. The following splitting is an exact algebraic identity:
\begin{equation}\label{eq:S1}
\mathcal D_5(\v_h^\star)=\mathcal R_s(\v_h^\star)+\mathcal B_s(\v_h^\star),
\end{equation}
where
\begin{align*}
\mathcal R_s(\v_h^\star)&=S_h(\w^\star;\v_h^\star)-S_h(\bm p^\star;\v_h^\star),\\
\mathcal B_s(\v_h^\star)&=\mus\sum_T\Bigl[
\langle(\Amap(P+X)-\Amap(P))\n,\jn{\v_h^\star}\rangle_{\partial T}\\
&\hspace{28mm}+\langle|P+X|(J_p+J_\xi)-|P|J_p,\nabla\v_h\n\rangle_{\partial T}\Bigr].
\end{align*}
Thus the entire monotone volume and jump difference has been removed from $\mathcal B_s$, but its two consistency-flux differences remain.

\paragraph{Approximation terms}
For $\mus>0$, expansion of $\mathcal R_s$ gives
\begin{equation}\label{eq:D51}
\begin{aligned}
\mus^{-1}\mathcal R_s(\v_h^\star)=\sum_T\Bigl[&
(\Amap(\nabla\w)-\Amap(P),\nabla\v_h)_T
-\langle(\Amap(\nabla\w)-\Amap(P))\n,\jn{\v_h^\star}\rangle_{\partial T}\\
&+\langle|P|J_p,\nabla\v_h\n\rangle_{\partial T}
-\tfrac{\alpha_2}{h_T^2}\langle\Amap(J_p),\jn{\v_h^\star}\rangle_{\partial T}\Bigr].
\end{aligned}
\end{equation}
For $\mus=0$ the assertion is immediate, so division by $\mus$ causes no difficulty. On a fixed element, let
\[
\begin{aligned}
a_T&=\|\nabla\bm\zeta\|_{L^3(T)},&
b_T&=\|\nabla\bm\zeta\|_{L^3(\partial T)},&
z_T&=\|J_p\|_{L^3(\partial T)},\\
x_T&=\|X\|_{L^3(T)},&
y_T&=(\alpha_2/h_T^2)^{1/3}\|J_\xi\|_{L^3(\partial T)}.
\end{aligned}
\]
From~\eqref{eq:RTbound3} and shape regularity,
\[
\|P\|_{L^3(T)}\lesssim h_T^{d/3}K_T,\qquad
\|P\|_{L^3(\partial T)}\lesssim h_T^{(d-1)/3}K_T.
\]
Using~\eqref{Conti} and Young's inequality, the four terms in~\eqref{eq:D51}, tested with $\bm\xi^\star$, are bounded, respectively, by
\begin{align}
&\varepsilon x_T^3+C_\varepsilon\left(a_T^3+h_T^{d/2}K_T^{3/2}a_T^{3/2}\right),\label{eq:rsvol}\\
&\varepsilon y_T^3+C_\varepsilon\left(h_T b_T^3+h_T^{(d+1)/2}K_T^{3/2}b_T^{3/2}\right),\label{eq:rsflux}\\
&\varepsilon x_T^3+C_\varepsilon h_T^{(d-2)/2}K_T^{3/2}z_T^{3/2},\label{eq:rsadjoint}\\
&\varepsilon y_T^3+C_\varepsilon\tfrac{\alpha_2}{h_T^2}z_T^3.\label{eq:rspenalty}
\end{align}
For~\eqref{eq:rsflux}, the factor $h_T$ in front of $b_T^3$ comes from pairing a boundary flux with the $h_T^{-2/3}$-weighted jump norm. In~\eqref{eq:rsadjoint}, the only inverse trace is
$\|X\|_{L^3(\partial T)}\lesssim h_T^{-1/3}x_T$ for the discrete gradient $X$.
Because $|J_p|=|J_\zeta|$ and $|T|\simeq h_T^d$, all the data terms in~\eqref{eq:rsvol}--\eqref{eq:rspenalty} are controlled by
\[
C_\varepsilon\left[\mathcal E_{s,T}^3+
|T|^{1/2}K_T^{3/2}\mathcal E_{s,T}^{3/2}\right].
\]
Consequently, for any prescribed $\varepsilon>0$ after rescaling it by an absolute factor,
\begin{equation}\label{eq:S2}
|\mathcal R_s(\bm\xi^\star)|
\le\varepsilon\mus\nnorm{\bm\xi^\star}_s^3
+C_\varepsilon\mus\sum_T
\left[\mathcal E_{s,T}^3+|T|^{1/2}K_T^{3/2}\mathcal E_{s,T}^{3/2}\right].
\end{equation}
No inverse-trace estimate has been applied to $\nabla\bm\zeta$.

\paragraph{Discrete nonlinear facet differences}
The pointwise inequalities
\begin{align*}
|\Amap(P+X)-\Amap(P)|&\le C(|X|^2+|P||X|),\\
\bigl||P+X|(J_p+J_\xi)-|P|J_p\bigr|
&\le(|P|+|X|)|J_\xi|+|X||J_p|
\end{align*}
imply
\begin{equation}\label{eq:bsbound}
|\mathcal B_s(\bm\xi^\star)|
\le C\mus\sum_T\int_{\partial T}
\left(|X|^2|J_\xi|+K_T|X||J_\xi|+|X|^2|J_p|\right).
\end{equation}
For the cubic mixed term,
\[
\int_{\partial T}|X|^2|J_\xi|
\lesssim\alpha_2^{-1/3}x_T^2y_T.
\]
A sufficiently large fixed $\alpha_2$ makes its coefficient in front of $x_T^3+y_T^3$ as small as required. The term involving $J_p$ satisfies
\[
\int_{\partial T}|X|^2|J_p|
\lesssim h_T^{-2/3}x_T^2 z_T
\le\varepsilon x_T^3+C_\varepsilon h_T^{-2}z_T^3.
\]
Its last contribution is included in $\mathcal E_{s,T}^3$.

It remains to estimate the quadratic term involving $K_T$. The polynomial inverse and trace inequalities give
\[
\|X\|_{L^2(\partial T)}\lesssim h_T^{-3/2}\|\bm\xi\|_{L^2(T)}.
\]
Therefore, for any $\varepsilon>0$,
\begin{equation}\label{eq:S3}
C\mus K_T\|X\|_{L^2(\partial T)}\|J_\xi\|_{L^2(\partial T)}
\le\varepsilon\alpha_3\|J_\xi\|_{L^2(\partial T)}^2
+\frac{C_\varepsilon\mus^2}{\alpha_3h_T^3}K_T^2\|\bm\xi\|_{L^2(T)}^2.
\end{equation}
This is where the viscosity-independent quadratic facet penalty is essential to the present proof. We choose the Young parameters and then $\alpha_2$ so that the total cubic contribution of~\eqref{eq:S2} and~\eqref{eq:bsbound} is at most $\mus\nnorm{\bm\xi^\star}_s^3/8$, and the total quadratic jump contribution is at most $\alpha_3\|J_\xi\|_{L^2(\partial\Th)}^2/4$. Summation proves~\eqref{eq:D5}.
\end{proof}

\begin{myremark}[Local-to-global summation]\label{rem:summation}
The mixed approximation term must be summed before replacing local quantities by global norms. In particular, Cauchy--Schwarz gives
\begin{equation}\label{eq:sumcorrect}
\begin{aligned}
\sum_T|T|^{1/2}(K_T^n)^{3/2}(\mathcal E_{s,T}^n)^{3/2}
&\le |\Omega|^{1/2}K_n^{3/2}
\left(\sum_T(\mathcal E_{s,T}^n)^3\right)^{1/2}\\
&\lesssim h^{3m/2}K_n^{3/2}|\w^n|_{W^{m+1,3}(\Omega)}^{3/2}.
\end{aligned}
\end{equation}
The factor $|T|^{1/2}$ is absorbed by the sum of element volumes. It does not survive as an additional global factor $h^{d/2}$. Likewise, the gradient-trace term in~\eqref{def:Es} has the correct $h_T$ weight and cannot be discarded by treating the interpolation error as a polynomial.
\end{myremark}

\subsection{A priori error bound}
Define the nonnegative approximation residual
\begin{equation}\label{eq:Lambda}
\begin{aligned}
\mathcal R_n={}&\nu\nnorm{\bm\zeta^{n,\star}}_{a'}^2
+\sum_T(K_T^n)^2\|\bm\zeta^n\|_{L^2(T)}^2
+\alpha_3\|J_\zeta^n\|_{L^2(\partial\Th)}^2\\
&+\sum_T(U_T^n+h_TK_T^n)
\left(\|\bm\zeta^n\|_{L^2(\partial T)}^2+
\|\widetilde{\bm\zeta}^n\|_{L^2(\partial T)}^2\right)\\
&+\mus\sum_T\left[(\mathcal E_{s,T}^n)^3+
|T|^{1/2}(K_T^n)^{3/2}(\mathcal E_{s,T}^n)^{3/2}\right],\\
\Lambda_N={}&\tau^2\|\partial_{tt}\w\|_{L^2(0,T;L^2)}^2
+\|\partial_t\bm\zeta\|_{L^2(0,T;L^2)}^2+\tau\sum_{n=1}^N\mathcal R_n.
\end{aligned}
\end{equation}
For a fixed constant $C_0$ large enough to dominate the residual estimates, set
\begin{equation}\label{eq:errcon}
\Gamma_n=C_0\left(1+K_n+
\frac{\mus^2}{\alpha_3}\max_{T\in\Th}h_T^{-3}(K_T^n)^2\right),
\qquad \tau\max_{1\le n\le N}\Gamma_n<1,
\end{equation}
and define
\begin{equation}\label{def:Gronwall}
\mathcal G_N=\exp\!\left(\tau\sum_{n=1}^N\frac{\Gamma_n}{1-\tau\Gamma_n}\right).
\end{equation}
The constant $C_0$ has no dependence on inverse powers of $\nu$. We display the factor $\alpha_3^{-1}$ that enters the nonlinear facet estimate rather than hiding it in the mesh/filter-scale ratio.

\begin{mytheorem}[Pressure-robust, Reynolds-semi-robust velocity estimate]\label{the0::err1}
Suppose Assumption~\ref{Assum1} holds and the penalty parameters satisfy the preceding coercivity and residual bounds. Let $(\w_h^{n,\star},r_h^{n,\star})$ be any solution sequence of~\eqref{eq:fulld}, initialized by $\w_h^0=\Pi_{RT}\w_0$. Under~\eqref{eq:errcon},
\begin{equation}\label{eq::err1}
\begin{aligned}
&\max_{0\le n\le N}\|\bm\xi^n\|_{L^2}^2
+\sum_{n=1}^N\|\bm\xi^n-\bm\xi^{n-1}\|_{L^2}^2\\
&\quad+\tau\sum_{n=1}^N\Bigl[
\nu\nnorm{\bm\xi^{n,\star}}_a^2
+\mus\nnorm{\bm\xi^{n,\star}}_s^3
+\alpha_3\|J_\xi^n\|_{L^2(\partial\Th)}^2
+\langle|\w_h^n\cdot\n|,|J_\xi^n|^2\rangle_{\partial\Th}\Bigr]
\le C\mathcal G_N\Lambda_N.
\end{aligned}
\end{equation}
The constants contain no explicit inverse powers of $\nu$, and no pressure approximation term occurs on the right-hand side.
\end{mytheorem}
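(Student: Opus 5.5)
We test the kernel error equation~\eqref{eq:error} with $\v_h^\star=\bm\xi^{n,\star}$, which is admissible because $\bm\xi^{n,\star}\in\V_{\divop,h}^\star$ by~\eqref{eq:errorkernel}.

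\textbf{Left-hand side.} We bound the four terms from below.
\begin{itemize}
\item For the time term we use $2(\bm\xi^n-\bm\xi^{n-1},\bm\xi^n)=\|\bm\xi^n\|^2-\|\bm\xi^{n-1}\|^2+\|\bm\xi^n-\bm\xi^{n-1}\|^2$.
\item Coercivity~\eqref{ah_eq} gives $a_h(\bm\xi^{n,\star},\bm\xi^{n,\star})\ge C_a\nu\nnorm{\bm\xi^{n,\star}}_a^2$.
\item Monotonicity~\eqref{eq:monotoneerror} gives $\mathcal M_h^n(\bm\xi^{n,\star})\ge\frac{\mus}{4}\nnorm{\bm\xi^{n,\star}}_s^3$.
\item Identity~\eqref{tri_eq}, applied with $\bm b=\w_h^n$, gives $\frac12\langle|\w_h^n\cdot\n|,|J_\xi^n|^2\rangle_{\partial\Th}$. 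This is legitimate because $\w_h^n$ is $H(\divop)$-conforming and elementwise divergence-free by Lemma~\ref{lem:div-free}.
\end{itemize}
Together with $\alpha_3\|J_\xi^n\|^2_{L^2(\partial\Th)}$ from $J_h$, every dissipative quantity in~\eqref{eq::err1} therefore appears on the left with a fixed positive coefficient.

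\textbf{Right-hand side.} We insert the residual bounds.
\begin{itemize}
\item Lemma~\ref{lem:simpleresiduals} handles $\mathcal D_1$, $\mathcal D_2$ and $\mathcal D_6$, with $\varepsilon=1$ in~\eqref{eq:D1}.
\item Lemma~\ref{lemma:D4} handles $\mathcal D_4$.
\item Lemma~\ref{lemma:D5} handles $\mathcal D_5$.
\end{itemize}
The absorbable pieces are $\frac{C_a\nu}{2}\nnorm{\cdot}_a^2$, $\frac{\mus}{8}\nnorm{\cdot}_s^3$, $\frac{\alpha_3}{4}+\frac{\alpha_3}{4}$ of the jump penalty, and $\frac14$ of the upwind form. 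Each is at most half of the corresponding left-hand coefficient, so all can be absorbed.

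The non-absorbable quadratic terms in $\bm\xi^n$ are $(1+C+C(1+K_n))\|\bm\xi^n\|^2$ and $\frac{C\mus^2}{\alpha_3}\sum_Th_T^{-3}(K_T^n)^2\|\bm\xi^n\|_{L^2(T)}^2$. We bound the latter by its maximum over elements times $\|\bm\xi^n\|^2$. Choosing $C_0$ large, all of these combine into $\frac12\Gamma_n\|\bm\xi^n\|^2$, where $\Gamma_n$ is as in~\eqref{eq:errcon}. The data terms collect into $C(\mathcal T_n+\mathcal R_n)$, with $\mathcal R_n$ as in~\eqref{eq:Lambda}. None of the constants carries an inverse power of $\nu$: the only $\nu$-dependence is the factor $\nu$ in front of $\nnorm{\bm\zeta^{n,\star}}_{a'}^2$, which is kept inside $\mathcal R_n$.

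\textbf{Time summation.} Multiplying by $2\tau$, we obtain for each $n$
\[
(1-\tau\Gamma_n)\|\bm\xi^n\|^2+\|\bm\xi^n-\bm\xi^{n-1}\|^2+\tau D_n\le\|\bm\xi^{n-1}\|^2+C\tau(\mathcal T_n+\mathcal R_n),
\]
where $D_n$ collects the bracketed dissipation terms of~\eqref{eq::err1}. Since $\tau\Gamma_n<1$, we divide by $1-\tau\Gamma_n$ and use
\[
(1-\tau\Gamma_n)^{-1}=1+\frac{\tau\Gamma_n}{1-\tau\Gamma_n}\le\exp\!\Bigl(\frac{\tau\Gamma_n}{1-\tau\Gamma_n}\Bigr).
\]
We then apply the standard discrete Gronwall lemma in its implicit form, starting from $\bm\xi^0=0$; this holds because $\w_h^0=\Pi_{RT}\w_0=\bm p^0$.

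Because the dissipation terms also carry the factor $(1-\tau\Gamma_n)^{-1}\ge1$ after division, they are retained as well. This yields
\[
\max_n\|\bm\xi^n\|^2+\sum_n\bigl(\|\bm\xi^n-\bm\xi^{n-1}\|^2+\tau D_n\bigr)\le C\mathcal G_N\sum_n\tau(\mathcal T_n+\mathcal R_n).
\]
Finally, $\sum_n\tau\mathcal T_n=\tau^2\|\partial_{tt}\w\|^2_{L^2(L^2)}+\|\partial_t\bm\zeta\|^2_{L^2(L^2)}$ by~\eqref{def:temporalresidual}, so the right-hand side equals $C\mathcal G_N\Lambda_N$, which is~\eqref{eq::err1}. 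No pressure term enters, because $\mathcal D_3$ vanishes on the kernel.

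\textbf{Main obstacle.} The delicate step is the bookkeeping. The absorption coefficients must be compatible: the two $\frac{\alpha_3}{4}$ jump contributions together with the $\frac14$ upwind fraction must stay below the left-hand coefficients. We must also make sure that $\Gamma_n$, as defined, dominates every $\|\bm\xi^n\|^2$ coefficient, including the element-local $h_T^{-3}$ term via the maximum. Finally, the Gronwall step has to be arranged so that the dissipation sums, and not only the maximum norm, are multiplied by $\mathcal G_N$ and no worse factor.
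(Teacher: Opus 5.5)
Your proposal is correct and follows essentially the same route as the paper: test with $\bm\xi^{n,\star}$, absorb the residuals with the same half-coefficient bookkeeping, collect all coefficients of $\|\bm\xi^n\|_{L^2}^2$ (including the factor two and the elementwise $h_T^{-3}$ term via the maximum) into $\Gamma_n$, and close with the implicit discrete Gronwall lemma starting from $\bm\xi^0=0$. The only cosmetic difference is that you divide each step by $1-\tau\Gamma_n$ before iterating, whereas the paper sums first and applies the Gronwall lemma directly; both give exactly the factor $\mathcal G_N$.
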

\begin{proof}
Take $\v_h^\star=\bm\xi^{n,\star}$ in~\eqref{eq:error}, which is admissible by~\eqref{eq:errorkernel}. Apply coercivity~\eqref{ah_eq}, monotonicity~\eqref{eq:monotoneerror}, the upwind identity~\eqref{tri_eq}, and Lemmas~\ref{lem:simpleresiduals}, \ref{lemma:D4}, and~\ref{lemma:D5}. Choose a fixed positive $\varepsilon$ in~\eqref{eq:D1}. With $E_n=\|\bm\xi^n\|_{L^2}^2$ and $I_n=\|\bm\xi^n-\bm\xi^{n-1}\|_{L^2}^2$, multiplication by $2\tau$ yields
\begin{equation}\label{eq:energyrecurrence}
E_n-E_{n-1}+I_n+\tau\mathcal H_n
\le\tau\Gamma_n E_n+C\tau(\mathcal T_n+\mathcal R_n),
\end{equation}
where
\[
\mathcal H_n=C_a\nu\nnorm{\bm\xi^{n,\star}}_a^2
+\tfrac{\mus}{4}\nnorm{\bm\xi^{n,\star}}_s^3
+\alpha_3\|J_\xi^n\|_{L^2(\partial\Th)}^2
+\tfrac12\langle|\w_h^n\cdot\n|,|J_\xi^n|^2\rangle_{\partial\Th}.
\]
The fixed constant $C_0$ in~\eqref{eq:errcon} is chosen to include all coefficients of $E_n$, including the factor two introduced in this step.

Summing~\eqref{eq:energyrecurrence} from $n=1$ to any $M\le N$ and using $E_0=0$ gives
\[
E_M+\sum_{n=1}^M I_n+\tau\sum_{n=1}^M\mathcal H_n
\le\tau\sum_{n=1}^M\Gamma_nE_n+C\Lambda_M.
\]
The discrete Gronwall inequality with $\tau\Gamma_n<1$ yields
\[
E_M+\sum_{n=1}^M I_n+\tau\sum_{n=1}^M\mathcal H_n
\le C\exp\!\left(\tau\sum_{n=1}^M\frac{\Gamma_n}{1-\tau\Gamma_n}\right)\Lambda_M;
\]
see, for example, \cite{Heywood,Quarteroni}. Applying this for every $M$ controls the maximum of $E_M$; applying it for $M=N$ controls all accumulated terms. Absorbing the fixed positive coefficients and combining the two bounds proves~\eqref{eq::err1}.
\end{proof}

\begin{mycorollary}[Approximation-order form]\label{cor:rates}
Under the hypotheses of Theorem~\ref{the0::err1},
\begin{equation}\label{eq:err}
\begin{aligned}
&\max_{0\le n\le N}\|\w^n-\w_h^n\|_{L^2}^2
+\nu\tau\sum_{n=1}^N\|\grad(\w^n-\w_h^n)\|_{L^2}^2\\
&\qquad\le C\mathcal G_N
\left[\tau^2+\nu h^{2m}+h^{2m+1}
+\mus\bigl(h^{3m}+h^{3m/2}\bigr)\right].
\end{aligned}
\end{equation}
The constant may depend on the solution norms in Assumption~\ref{Assum1}, $T$, and the fixed parameters. It contains no explicit inverse power of $\nu$. The quantity $\mathcal G_N$ still uses the local mesh sizes in~\eqref{eq:errcon}.
\end{mycorollary}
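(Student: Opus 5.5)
The estimate follows from Theorem~\ref{the0::err1} together with the splitting $\w^n-\w_h^n=\bm\zeta^n-\bm\xi^n$ and local approximation bounds for every term in $\Lambda_N$.

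\textbf{Step 1: the left-hand side.} By the triangle inequality,
\[
\|\w^n-\w_h^n\|_{L^2}^2\le2\|\bm\zeta^n\|_{L^2}^2+2\|\bm\xi^n\|_{L^2}^2,
\qquad
\|\grad(\w^n-\w_h^n)\|_{L^2}^2\le2\|\nabla\bm\zeta^n\|_{L^2}^2+2\nnorm{\bm\xi^{n,\star}}_a^2 .
\]
The $\bm\xi$ contributions are bounded by \eqref{eq::err1}. The $\bm\zeta$ contributions are bounded via \eqref{eq:RT1}, which gives $\|\bm\zeta^n\|_{L^2}\lesssim h^{m+1}$ and $\nu\|\nabla\bm\zeta^n\|_{L^2}^2\lesssim\nu h^{2m}$. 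These are dominated by the claimed right-hand side because $\mathcal G_N\ge1$.

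\textbf{Step 2: bounding $\Lambda_N$ term by term.} Use \eqref{eq:RT1}, \eqref{eq:RT2}, \eqref{eq:facetapprox} with $\ell=m+1$, $p=2$ or $3$, and $h_T\le h$ inside the element sums.
\begin{itemize}
\item The temporal part: $\tau^2\|\partial_{tt}\w\|^2$ is covered by $\w\in H^2(0,T;L^2)$. For $\|\partial_t\bm\zeta\|_{L^2(0,T;L^2)}^2\lesssim h^{2m+2}\|\partial_t\w\|_{L^2(H^{m+1})}^2$, use commutation of $\Pi_{RT}$ with $\partial_t$.
\item In $\mathcal R_n$, the term $\nu\nnorm{\bm\zeta^{n,\star}}_{a'}^2$ combines $\|\nabla\bm\zeta\|_T^2\lesssim h_T^{2m}$, the jump term $h_T^{-1}\|J_\zeta\|_{\partial T}^2\lesssim h_T^{2m}$ (note $|J_\zeta|\le|\bm\zeta|+|\widetilde{\bm\zeta}|$), and $h_T\|\nabla\bm\zeta\n\|_{\partial T}^2\lesssim h_T^{2m}$. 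Summed, it gives $\nu h^{2m}$.
\item The term $(K_T^n)^2\|\bm\zeta\|_T^2$ gives $h^{2m+2}$, since $K_n\le\|\w\|_{C(W^{1,\infty})}$.
\item The terms $\alpha_3\|J_\zeta\|^2_{\partial\Th}$ and $(U_T+h_TK_T)(\|\bm\zeta\|_{\partial T}^2+\|\widetilde{\bm\zeta}\|_{\partial T}^2)$ both give $h^{2m+1}$ by the trace bounds with $p=2$. This is the dominant $h^{2m+1}$ contribution.
\item The nonlinear terms: $\mus\sum_T(\mathcal E^n_{s,T})^3\lesssim\mus h^{3m}|\w^n|^3_{W^{m+1,3}}$ by \eqref{eq:Esapprox}, and the mixed sum is $\lesssim\mus h^{3m/2}$ by Remark~\ref{rem:summation}, specifically \eqref{eq:sumcorrect}.
\end{itemize}
Since $\tau\sum_{n\le N}1=T$ and the spatial norms are continuous in time by Assumption~\ref{Assum1}, the nodal sums are bounded by $T$ times the suprema. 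Collecting terms gives $\Lambda_N\le C[\tau^2+\nu h^{2m}+h^{2m+1}+\mus(h^{3m}+h^{3m/2})]$, with the $h^{2m+2}$ terms absorbed into $h^{2m+1}$ because $h\le1$.

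\textbf{Main obstacle.} The delicate point is bookkeeping rather than a new estimate. The trace terms must use the $L^p$ trace-approximation estimates \eqref{eq:RT2} and \eqref{eq:facetapprox} applied to the non-polynomial $\bm\zeta$ and $\widetilde{\bm\zeta}$, and no inverse inequalities. The mixed nonlinear sum must be handled globally via Cauchy--Schwarz, as in Remark~\ref{rem:summation}, and not elementwise. Finally, all constants must depend only on solution norms and fixed penalties, not on $\nu^{-1}$, so that no inverse powers of $\nu$ enter. $\mathcal G_N$ is left unchanged from Theorem~\ref{the0::err1}, which is why it still carries the local mesh sizes from \eqref{eq:errcon}.
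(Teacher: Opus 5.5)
Your proposal is correct and follows the paper's route: you bound each term of $\Lambda_N$ by the local interpolation and trace-approximation estimates, with the mixed nonlinear term summed globally via \eqref{eq:sumcorrect}, then add the element interpolation error to \eqref{eq::err1} by the triangle inequality and absorb $h^{2m+2}$ into $h^{2m+1}$. Your explicit use of $\mathcal G_N\ge1$ to place the interpolation error under the Gronwall factor is a step the paper leaves implicit; note also that $\nabla\bm\zeta^n$ should be the broken gradient $\grad\bm\zeta^n$, since $\Pi_{RT}\w^n$ is only $H(\divop)$-conforming.
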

\begin{proof}
The interpolation and trace estimates give
\[
\nnorm{\bm\zeta^{n,\star}}_{a'}^2\lesssim h^{2m}|\w^n|_{H^{m+1}}^2,
\qquad
\sum_T\bigl(\|\bm\zeta^n\|_{L^2(\partial T)}^2+
\|\widetilde{\bm\zeta}^n\|_{L^2(\partial T)}^2\bigr)
\lesssim h^{2m+1}|\w^n|_{H^{m+1}}^2,
\]
and
\[
\|\partial_t\bm\zeta\|_{L^2(0,T;L^2)}^2
\lesssim h^{2m+2}\|\partial_t\w\|_{L^2(0,T;H^{m+1})}^2.
\]
By~\eqref{eq:Esapprox} and~\eqref{eq:sumcorrect},
\[
\begin{aligned}
\tau\sum_n\sum_T(\mathcal E_{s,T}^n)^3
&\lesssim T h^{3m}\max_t|\w(t)|_{W^{m+1,3}}^3,\\
\tau\sum_n\sum_T|T|^{1/2}(K_T^n)^{3/2}(\mathcal E_{s,T}^n)^{3/2}
&\lesssim T h^{3m/2}\max_t\|\nabla\w(t)\|_{L^\infty}^{3/2}
\max_t|\w(t)|_{W^{m+1,3}}^{3/2}.
\end{aligned}
\]
These bounds control $\Lambda_N$. Add the element interpolation error to~\eqref{eq::err1}, use the triangle inequality for the velocity and its broken gradient, and absorb $h^{2m+2}$ into $h^{2m+1}$ since $h\le1$. This proves~\eqref{eq:err}.
\end{proof}

\begin{mycorollary}[Pre-asymptotic regime]\label{cor:preasymptotic}
Assume additionally that the meshes are quasi-uniform, $\alpha_3>0$ is fixed, $\delta\le C_\delta h$, and
\begin{equation}\label{eq:margin}
\tau\max_n\Gamma_n\le\theta<1
\end{equation}
with a fixed $\theta$. If the solution norms entering~\eqref{eq:err} are bounded uniformly in the parameters being varied, then $\mathcal G_N$ is uniformly bounded. For $m=k$,
\begin{equation}\label{eq:generalrate}
\max_n\|\w^n-\w_h^n\|_{L^2}
\le C\left[\tau+\nu^{1/2}h^k+h^{k+1/2}
+C_s\delta\bigl(h^{3k/2}+h^{3k/4}\bigr)\right].
\end{equation}
If also $\nu\le C_\nu h$, the right-hand side is bounded by
\begin{equation}\label{eq:prerate}
C\left(\tau+h^{k+1/2}+h^{3k/2+1}+h^{3k/4+1}\right).
\end{equation}
For $\mus=0$, the convection-dominated bound reduces to $C(\tau+h^{k+1/2})$.
\end{mycorollary}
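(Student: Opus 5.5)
The plan is to derive the corollary directly from Corollary~\ref{cor:rates} with $m=k$. There are two tasks: show that $\mathcal G_N$ is uniformly bounded, and then simplify the rate.

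\textbf{Step 1: uniform bound on $\mathcal G_N$.}
By \eqref{eq:margin}, $1-\tau\Gamma_n\ge1-\theta$. Hence
\[
\tau\sum_{n=1}^N\frac{\Gamma_n}{1-\tau\Gamma_n}\le\frac{\tau}{1-\theta}\sum_{n=1}^N\Gamma_n,
\]
and it suffices to bound $\Gamma_n$ uniformly.

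\begin{itemize}
\item The term $1+K_n$ is controlled by $\max_t\|\nabla\w(t)\|_{L^\infty}$, which is assumed uniformly bounded.
\item For the mesh-dependent term, quasi-uniformity gives $h_T^{-3}\le c_{\rm qu}^{-3}h^{-3}$. With $\mus=(C_s\delta)^2\le C_s^2C_\delta^2h^2$, we get $\mus^2\lesssim h^4$.
\item Therefore $\mus^2\alpha_3^{-1}\max_Th_T^{-3}(K_T^n)^2\lesssim h\,K_n^2/\alpha_3\le K_n^2/\alpha_3$, since $h\le1$ and $\alpha_3$ is fixed.
\end{itemize}

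Thus $\Gamma_n\le\bar\Gamma$ uniformly, so $\tau\sum_n\Gamma_n\le T\bar\Gamma$ and $\mathcal G_N\le\exp(T\bar\Gamma/(1-\theta))$.

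This is the only delicate step. It is exactly where the scaling $\delta=O(h)$ is needed to cancel the $h^{-3}$ coming from the nonlinear facet estimate. I also need to check that $C_0$ and the solution-norm constants are parameter-independent, which is assumed.

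\textbf{Step 2: the rate \eqref{eq:generalrate}.}
Take square roots in \eqref{eq:err} with $m=k$ and use $\sqrt{a+b}\le\sqrt a+\sqrt b$. This gives
\[
\tau+\nu^{1/2}h^k+h^{k+1/2}+\mus^{1/2}\bigl(h^{3k/2}+h^{3k/4}\bigr).
\]
Since $\mus^{1/2}=C_s\delta$, this is exactly \eqref{eq:generalrate}.

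\textbf{Step 3: the convection-dominated bound \eqref{eq:prerate}.}
If $\nu\le C_\nu h$, then $\nu^{1/2}h^k\le C_\nu^{1/2}h^{k+1/2}$. Inserting $C_s\delta\le C_sC_\delta h$ turns the last terms into $h^{3k/2+1}+h^{3k/4+1}$, which gives \eqref{eq:prerate}.

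\textbf{Step 4: the case $\mus=0$.}
For $\mus=0$, the $\mus$ terms drop out of \eqref{eq:err}. Moreover, $\Gamma_n=C_0(1+K_n)$ is then independent of $h$, so $\mathcal G_N$ is bounded without any condition on $\delta$. The bound reduces to $C(\tau+h^{k+1/2})$.
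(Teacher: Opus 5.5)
Your proposal is correct and follows the paper's proof essentially line by line. Quasi-uniformity and $\delta\le C_\delta h$ give $\mu_s^2\max_T h_T^{-3}\lesssim h$, so $\Gamma_n$ is uniformly bounded whenever $K_n$ is; the margin \eqref{eq:margin} then yields $\mathcal G_N\le\exp(T\max_n\Gamma_n/(1-\theta))$; and taking square roots in \eqref{eq:err} with $m=k$, together with $\nu^{1/2}h^k\lesssim h^{k+1/2}$ and $C_s\delta\lesssim h$, gives \eqref{eq:generalrate} and \eqref{eq:prerate}. Your explicit treatment of the case $\mu_s=0$, where $\Gamma_n=C_0(1+K_n)$ needs no scaling assumption on $\delta$, is a harmless addition that the paper leaves implicit.
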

\begin{proof}
Quasi-uniformity gives
$\mus^2\max_T h_T^{-3}\lesssim C_s^4\delta^4h^{-3}\lesssim h$.
Consequently, $\Gamma_n$ is uniformly bounded when $K_n$ is. By~\eqref{eq:margin},
$\mathcal G_N\le\exp(T\max_n\Gamma_n/(1-\theta))$.
Taking square roots in~\eqref{eq:err} proves~\eqref{eq:generalrate}, and $\nu^{1/2}h^k\lesssim h^{k+1/2}$ gives~\eqref{eq:prerate}.
\end{proof}

\begin{myremark}[Interpretation and limitations]\label{rem:scope}
These are nodal $\ell^\infty(L^2)$ estimates. A continuous-in-time $L^\infty(L^2)$ assertion additionally requires a specified time reconstruction and a bound for its interpolation error. For $k=1$ and $k=2$, the spatial orders guaranteed by~\eqref{eq:prerate} are $3/2$ and $5/2$, respectively. 
For $k\ge3$, the term $h^{3k/4+1}$ may become dominant and therefore cannot be regarded as uniformly higher order.

The condition~\eqref{eq:errcon} is imposed to control the Gronwall factor in the error estimate and is distinct from the energy-stability condition. In particular, a fixed margin in the inequality $\tau\Gamma_n<1$ ensures a uniformly bounded Gronwall factor. When $\delta$ depends on $h$, the corresponding Smagorinsky solution and the regularity constants may also depend on $h$; uniform convergence therefore relies on the uniform regularity assumptions stated in Corollary~\ref{cor:preasymptotic}.

Finally, pressure robustness and the absence of explicit $\nu^{-1}$ factors in the error bound concern the viscosity dependence of the estimate, rather than uniform control of every unweighted velocity-gradient error. In~\eqref{eq:err}, the gradient contribution is weighted by the viscosity. Parameter-uniform uniqueness is addressed separately in Theorem~\ref{thm:unique_full_discrete} under stronger hypotheses.
\end{myremark}

\begin{myremark}[Adding the modelling error]
Under the additional hypotheses of Lemma~\ref{lemma::pertur_err}, the triangle inequality gives
\[
\max_n\|\u(t_n)-\w_h^n\|_{L^2}
\le\|\u-\w\|_{L^\infty(0,T;L^2)}
+\max_n\|\w(t_n)-\w_h^n\|_{L^2}.
\]
Thus a comparison with Navier--Stokes adds an $O(\delta)$ or, under the stronger regularity assumption, an $O(\delta^2)$ modelling term. For example, if $\delta\simeq h$, an $O(h^2)$ modelling bound may dominate the $O(h^{5/2})$ discretization bound for $k=2$. 
The manufactured solutions used in the experiments below are constructed for the corresponding Smagorinsky model problems, and the reported errors therefore measure the associated discretization errors rather than a combined modeling and discretization error relative to an unforced Navier--Stokes flow.

\end{myremark}

\section{Numerical experiments}\label{Sec:exp}
This section retains the numerical error values supplied with the manuscript. The tabulated orders are recomputed from those values. The manufactured-solution test is directly connected to the error estimates, whereas the periodic, open-boundary, curved-boundary, and inviscid examples require extensions of the setting analyzed in Sections~\ref{Sec:num}--\ref{Sec:err}. Their results are therefore used as computational illustrations rather than as direct validations of all theorem hypotheses.

The numerical experiments are implemented in NGSolve~\cite{Sch} with the NETGEN mesh generator~\cite{Sch1997}. Nonlinear systems are solved by Newton-type iterations, and the resulting linear systems are treated using the sparse direct solver UMFPACK~\cite{Davis}. Structured meshes are generated by subdividing uniform Cartesian cells into triangles, while unstructured meshes are generated using the parameter \texttt{maxh}. We distinguish this mesh-generation parameter from the mathematical mesh size $h=\max_T\operatorname{diam}(T)$ which is measured from the resulting mesh. Static condensation is employed to eliminate element-interior unknowns locally, thereby reducing the size of the globally coupled system. Depending on the implementation, the elementwise pressure means may remain globally coupled with the facet unknowns.

Unless otherwise stated, we set penalty parameters $\alpha_1=\alpha_2=\alpha_3=10k^2$, grid scale $\delta=h$ and consider $C_s=0$ and $C_s=0.1$. Here, $h$ refers to the actual maximum element diameter of the generated mesh, rather than the nominal mesh-generation parameter \texttt{maxh} or a cellwise mesh size $h_T$. 
Representative structured and unstructured meshes are associated with Fig.~\ref{fig:mesh}. On unstructured grids, actual element counts and measured maximum diameters should accompany the generator parameters.

\begin{figure}[htbp]
\centering
\PaperGraphic[width=0.4\textwidth,height=70mm
]{0.4\textwidth}{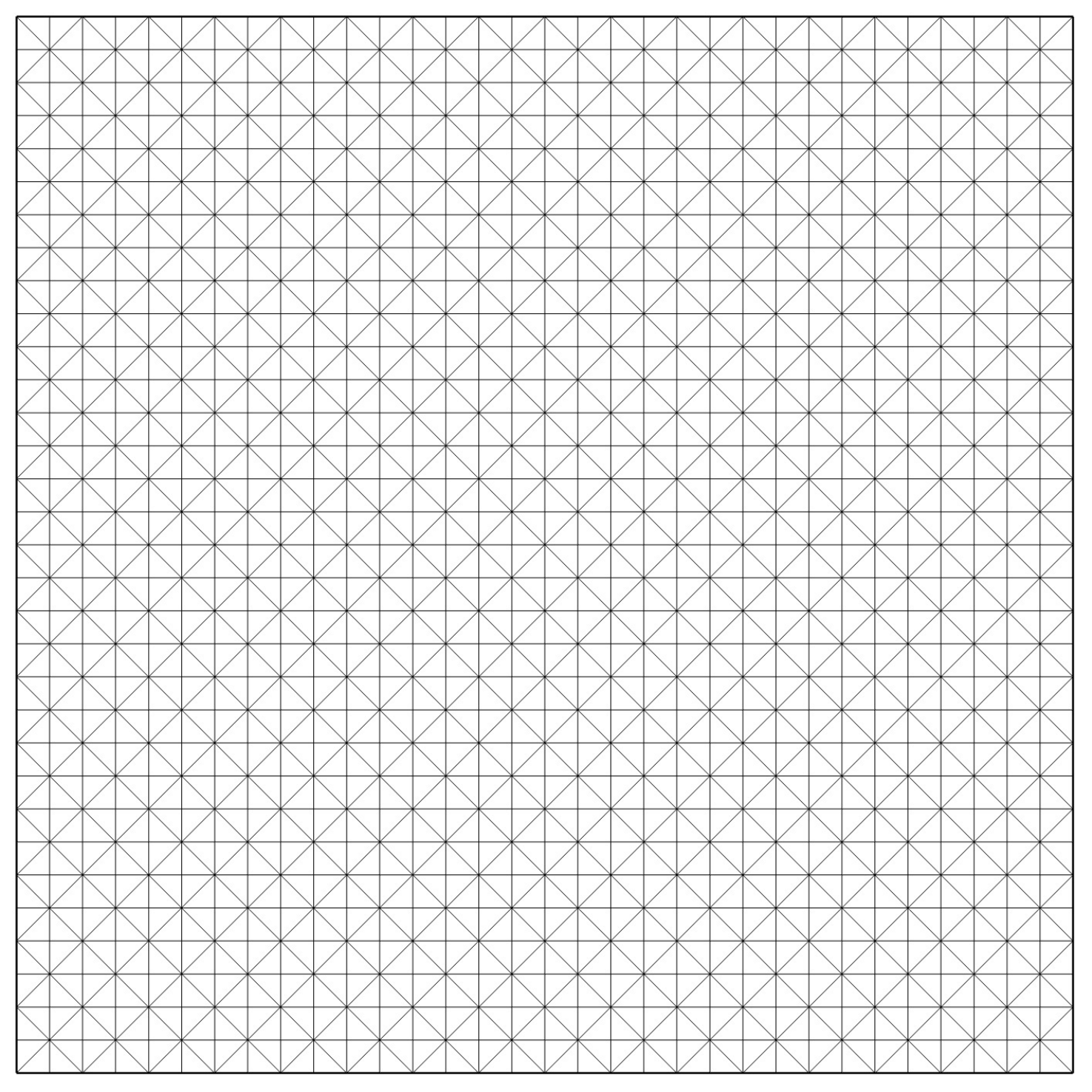}
\hspace{0.05\textwidth}
\PaperGraphic[width=0.4\textwidth,height=70mm
]{0.4\textwidth}{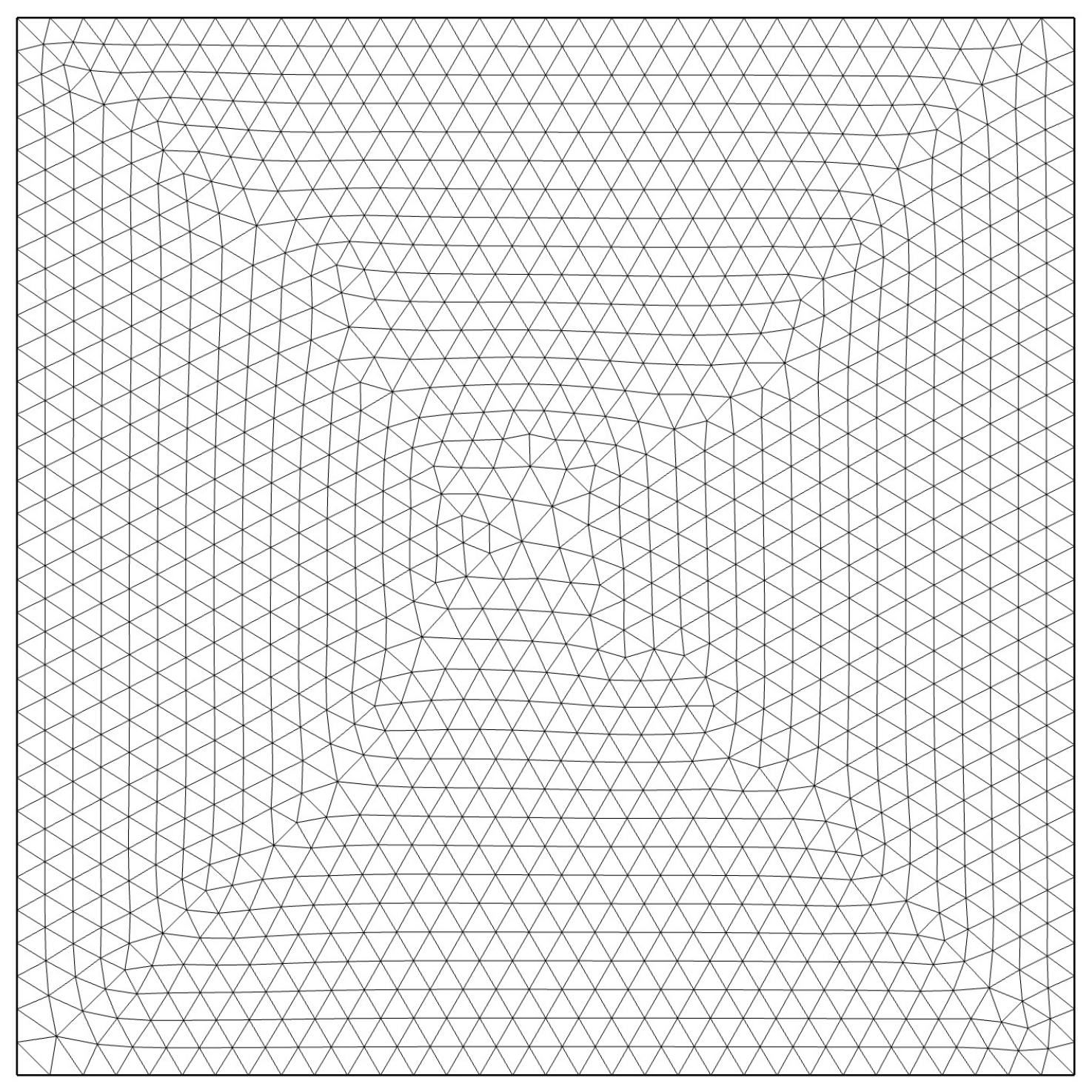}
\caption{\footnotesize Structured uniform triangulation ($N=64$, left) and unstructured 
Delaunay-type triangulation ($\texttt{maxh}=1/64$, right).}
\label{fig:mesh}
\end{figure}

\subsection{Two-dimensional Taylor--Green vortex}
Consider $\Omega=(0,2\pi)^2$ with periodic boundary conditions, zero body force $\bm f=\bm 0$ and initial velocity
\[
\w_0(x,y)=\begin{pmatrix}\sin x\cos y\\-\cos x\sin y\end{pmatrix}.
\]
The reported setup uses a structured mesh with 64 subdivisions per side, $\tau=0.01$, $T=20$, polynomial degrees $k=1,2$, and viscosities $\nu=1,10^{-1},10^{-2},10^{-4}$. The discrete kinetic energy is
\[
E_h^n=\frac12\int_\Omega|\w_h^n|^2\,d\x.
\]

The reported energy histories in Fig.~\ref{fig:energy} compare the effects of molecular and Smagorinsky dissipation for $k=1$ and $k=2$. In all cases, the kinetic energy decays monotonically, consistent with the discrete energy identity when periodic facets are paired so that their flux contributions cancel. As the viscosity decreases, the decay becomes progressively slower; for example, when $\nu=10^{-4}$, the energy remains close to its initial value over the simulation interval, whereas for $\nu=10^{0}$ it is rapidly dissipated. The Smagorinsky model with $C_s=0.1$ produces slightly stronger dissipation than the Navier--Stokes case with $C_s=0$, with the difference becoming more pronounced at lower viscosities. The results for $k=1$ and $k=2$ exhibit similar qualitative behavior, indicating that the observed energy-dissipation properties are insensitive to the polynomial degree.

These results demonstrate that the proposed method captures the expected energy-decay behavior across a broad range of viscosities and consistently reflects the additional dissipation introduced by the Smagorinsky model. The monotone decay should, however, be interpreted as an energy-stability diagnostic rather than an accuracy test, since temporal damping, upwind dissipation, and facet penalties may also contribute to the observed energy reduction.

\begin{figure}[htbp]
\centering
\PaperGraphic[width=0.48\textwidth]{0.48\textwidth}{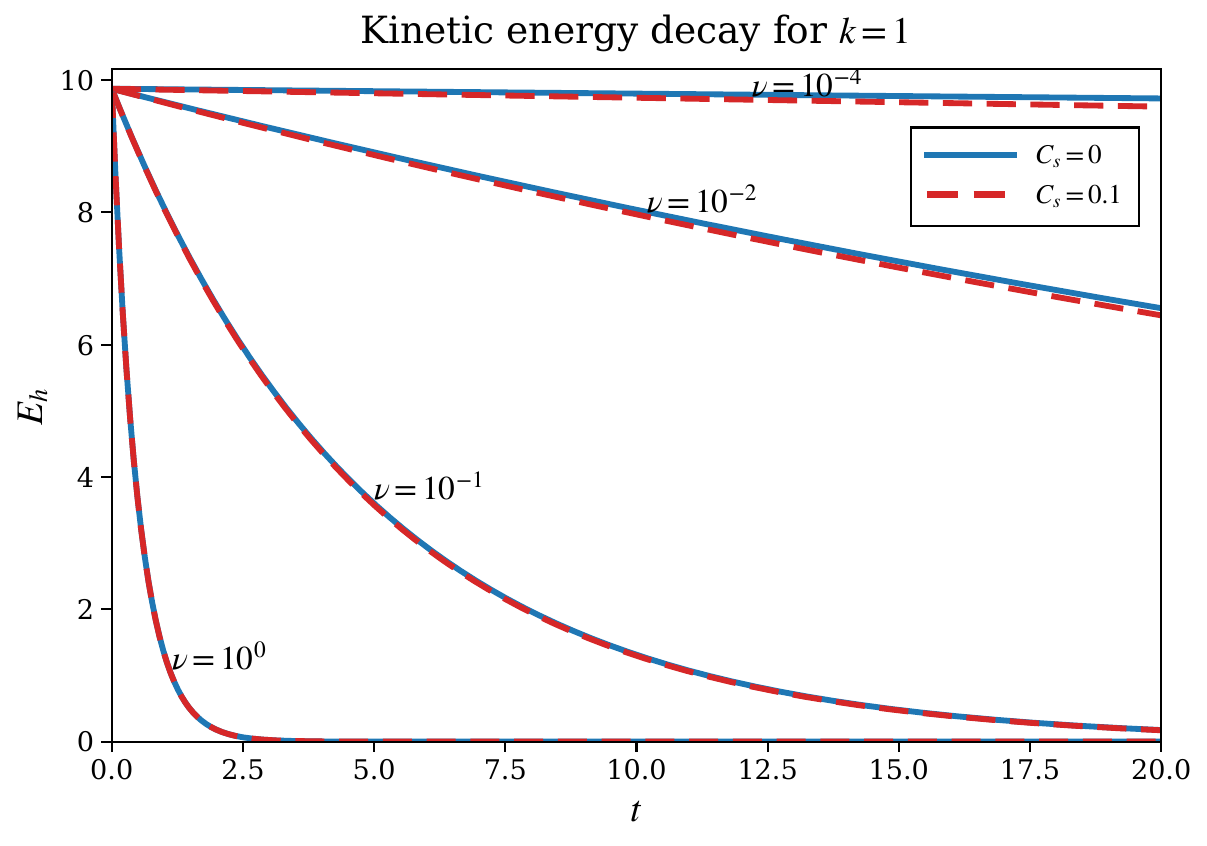}
\PaperGraphic[width=0.48\textwidth]{0.48\textwidth}{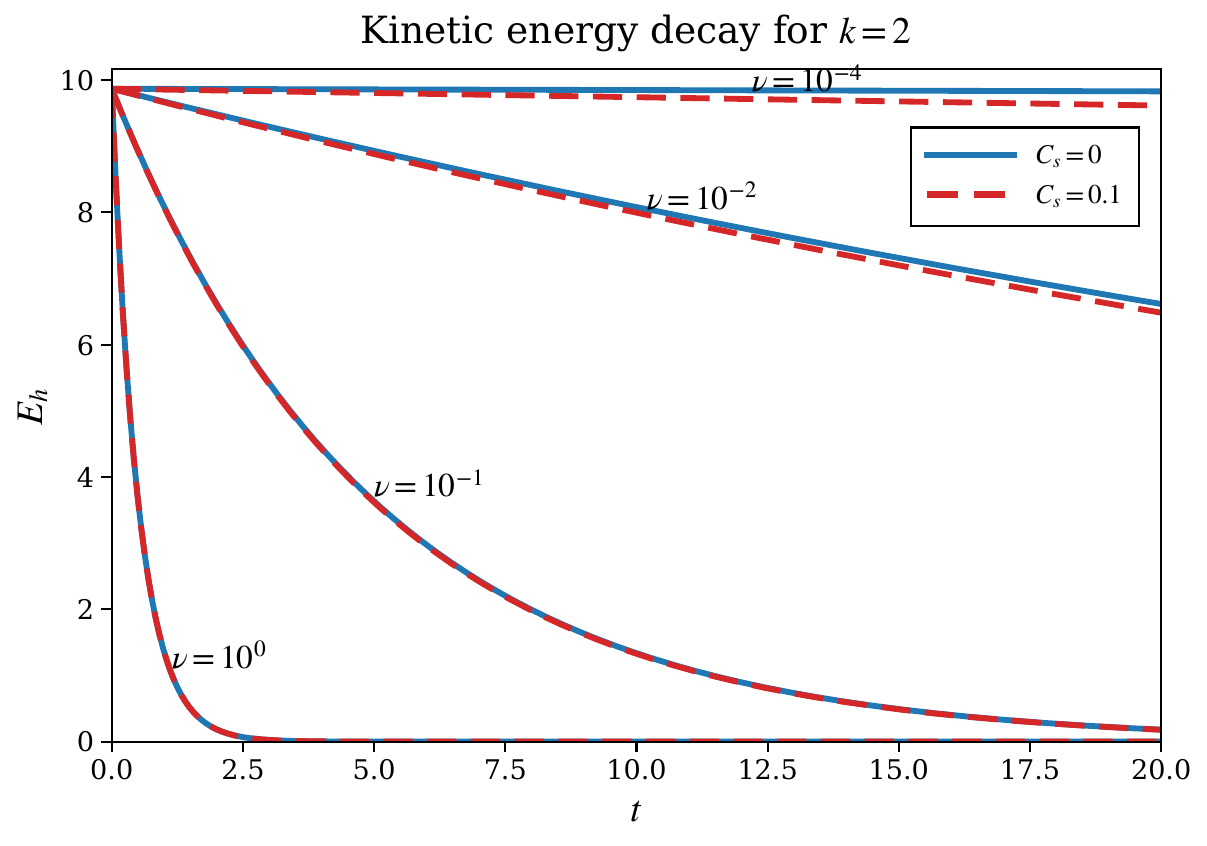}
\caption{\footnotesize Energy evolution of the 2D Taylor-Green vortex for polynomial degrees $k=1$ (left) and $k=2$ (right).}
\label{fig:energy}
\end{figure}

\subsection{Manufactured-solution convergence test}
On $\Omega=(0,1)^2$ and $0\le t\le1$, prescribe the smooth exact fields~\cite{Han2023}
\begin{equation}\label{eq:manufactured}
\begin{aligned}
\w(t,x,y)&=\frac{6+4\cos(4t)}{10}
\begin{pmatrix}
16y(1-y)(1-2y)\sin^2(\pi x)\\
-8\pi y^2(1-y)^2\sin(2\pi x)
\end{pmatrix},\\
r(t,x,y)&=\frac{6+4\cos(4t)}{10}\sin(\pi x)\cos(\pi y).
\end{aligned}
\end{equation}
The velocity is divergence-free and has zero trace on the boundary, and the pressure has zero mean. The force is obtained by substituting~\eqref{eq:manufactured} into the full gradient-based equation~\eqref{Smagorinsky_eq}, including the nonlinear eddy-viscosity term. Thus the manufactured force depends on $C_s$ and $\delta$. If $\delta$ changes with the mesh, the same exact fields solve a family of problems with correspondingly changed forcing.

The reported errors at $T=1$ are
\[
E_u=\|\w(T)-\w_h^N\|_{L^2(\Omega)},\qquad
E_{\nabla u}=\|\grad(\w(T)-\w_h^N)\|_{L^2(\Omega)},\qquad
E_{\rm div}=\|\grad\cdot\w_h^N\|_{L^2(\Omega)}.
\]

We denote the mesh-generator inputs by $h_{\rm nom}=1/4,1/8,1/16,1/32$. The nominal time step is prescribed as
\[
\tau_{\rm nom}=h_{\rm nom}^{3/2}\quad\text{for }k=1,
\qquad
\tau_{\rm nom}=h_{\rm nom}^{5/2}\quad\text{for }k=2.
\]
In the small-viscosity regime with $\delta=O(h)$, these choices correspond to the temporal orders $h^{3/2}$ and $h^{5/2}$ predicted by Corollary~\ref{cor:preasymptotic}; the additional Smagorinsky terms do not improve these guaranteed orders. For each spatial mesh, the actual number of time steps and time-step size are then defined by
\[
N_t=\left\lceil\frac{T}{\tau_{\rm nom}}\right\rceil,
\qquad
\tau=\frac{T}{N_t},
\]
so that a uniform time step is used and the final time is attained exactly.

Tables~\ref{tab:1} and~\ref{tab:2} report the complete set of computed errors. The experimental orders of convergence (EOCs) are calculated as
\[
\mathrm{EOC}
=\frac{\log\big(E(h_{{\rm nom},1})/E(h_{{\rm nom},2})\big)}
{\log\big(h_{{\rm nom},1}/h_{{\rm nom},2}\big)}=
\log_2\frac{E(1/16)}{E(1/32)}.
\]

For $C_s=0$, the results show that the $L^2$-velocity error converges with rates close to $1.5$ for $k=1$ and $2.5$ for $k=2$ as $\nu\to0$, in agreement with the theoretical prediction. When $C_s=0.1$, a slightly improved convergence behavior is observed. In particular, for $k=1$, the convergence rates exceed $1.5$ and approach $2$ in the convection-dominated regime, while for $k=2$, the rates remain close to $2.5$. 
The observed convergence rates reflect the finite-resolution behavior and should not be interpreted as evidence of a higher asymptotic convergence order or superconvergence.

In all cases, the divergence errors remain at the level of machine precision, consistent with the exactly divergence-free property of the discrete velocity. Together with the normal continuity across interelement facets, this property yields exact global mass conservation for the proposed method.

\begin{table}[htbp]
\centering
\fontsize{8.2}{10.2}\selectfont
\setlength{\tabcolsep}{2.1pt}
\renewcommand{\arraystretch}{1.16}
\caption{Numerical errors for $k=1$, $T=1$, and nominal time steps $\tau_{\rm nom}=h_{\rm nom}^{1.5}$. EOCs are $\log_2(E_{1/16}/E_{1/32})$, recalculated from the displayed rounded errors using the nominal mesh parameters. Divergence errors are at roundoff level and their EOCs are not reported.}

\begin{tabular}{c|c|ccc|ccc|ccc}
\hline
\multirow{2}{*}{$C_s$} & \multirow{2}{*}{$h_{\rm nom}$}
& \multicolumn{3}{c|}{$\nu=10^{0}$}
& \multicolumn{3}{c|}{$\nu=10^{-2}$}
& \multicolumn{3}{c}{$\nu=10^{-4}$} \\

&
& $E_u$ & $E_{\nabla u}$ & $E_{\rm div}$
& $E_u$ & $E_{\nabla u}$ & $E_{\rm div}$
& $E_u$ & $E_{\nabla u}$ & $E_{\rm div}$ \\

\hline

\multirow{5}{*}{$C_s=0$}
 & 1/4  & 6.42e-02 & 1.23e+00 & 8.94e-17 & 6.47e-02 & 1.32e+00 & 1.90e-16  & 1.05e-01 & 1.55e+00 & 1.71e-16  \\
 & 1/8  & 1.60e-02 & 6.51e-01 & 1.49e-16 & 2.60e-02 & 6.87e-01 & 1.64e-16  & 4.41e-02 & 9.05e-01 & 1.86e-16  \\
 & 1/16 & 3.12e-03 & 2.84e-01 & 1.38e-16 & 9.67e-03 & 2.95e-01 & 1.46e-16 & 1.34e-02 & 3.95e-01 & 1.39e-16 \\
 & 1/32 & 7.43e-04 & 1.43e-01 & 1.36e-16  & 3.59e-03 & 1.46e-01 & 1.39e-16  & 4.47e-03 & 1.75e-01 & 1.42e-16 \\
 & EOC & 2.070 & 0.990 & -- & 1.430 & 1.015 & -- & 1.584 & 1.174 & -- \\

\hline

\multirow{5}{*}{$C_s=0.1$}
 & 1/4  & 8.47e-02 & 1.26e+00 & 9.25e-17 & 2.05e-01 & 1.66e+00 & 5.54e-17  & 2.39e-01 & 1.84e+00 & 3.51e-17 \\
 & 1/8  & 1.92e-02 & 6.54e-01 & 1.59e-16 & 7.14e-02 & 8.00e-01 & 1.48e-16 & 9.94e-02 & 9.69e-01 & 1.12e-16 \\
 & 1/16 & 3.45e-03 & 2.84e-01 & 1.45e-16 & 7.07e-03 & 2.94e-01 & 1.42e-16 & 1.47e-02 & 3.56e-01 & 1.45e-16 \\
 & 1/32 & 7.82e-04 & 1.43e-01 & 1.38e-16 & 1.76e-03 & 1.46e-01 & 1.36e-16 & 2.60e-03 & 1.62e-01 & 1.41e-16 \\
 & EOC & 2.141 & 0.990 & -- & 2.006 & 1.010 & -- & 2.499 & 1.136 & -- \\

\hline
\hline

\multirow{2}{*}{$C_s$} & \multirow{2}{*}{$h_{\rm nom}$}
& \multicolumn{3}{c|}{$\nu=10^{-6}$}
& \multicolumn{3}{c|}{$\nu=10^{-8}$}
& \multicolumn{3}{c}{$\nu=10^{-10}$} \\

&
& $E_u$ & $E_{\nabla u}$ & $E_{\rm div}$
& $E_u$ & $E_{\nabla u}$ & $E_{\rm div}$
& $E_u$ & $E_{\nabla u}$ & $E_{\rm div}$ \\

\hline

\multirow{5}{*}{$C_s=0$}
 & 1/4  & 1.06e-01 & 1.57e+00 & 1.72e-16  & 1.06e-01 & 1.57e+00 & 3.20e-16  & 1.06e-01 & 1.57e+00 & 1.55e-16 \\
 & 1/8  & 4.61e-02 & 9.50e-01 & 1.62e-16  & 4.61e-02 & 9.50e-01 & 1.74e-16 & 4.61e-02 & 9.50e-01 & 1.59e-16 \\
 & 1/16 & 1.54e-02 & 5.68e-01 & 1.32e-16  & 1.55e-02 & 5.69e-01 & 1.41e-16 & 1.55e-02 & 5.69e-01 & 1.36e-16 \\
 & 1/32 & 5.51e-03 & 3.86e-01 & 1.44e-16 & 5.54e-03 & 3.92e-01 & 1.38e-16 & 5.54e-03 & 3.92e-01 & 1.32e-16 \\
 & EOC & 1.483 & 0.557 & -- & 1.484 & 0.538 & -- & 1.484 & 0.538 & -- \\

\hline

\multirow{5}{*}{$C_s=0.1$}
 & 1/4  & 2.39e-01 & 1.84e+00 & 3.09e-17  & 2.39e-01 & 1.84e+00 & 2.95e-17 & 2.39e-01 & 1.84e+00 & 4.19e-17 \\
 & 1/8  & 9.97e-02 & 9.72e-01 & 1.20e-16 & 9.95e-02 & 9.68e-01 & 1.09e-16 & 9.97e-02 & 9.72e-01 & 1.34e-16 \\
 & 1/16 & 1.48e-02 & 3.60e-01 & 1.26e-16 & 1.49e-02 & 3.62e-01 & 1.38e-16 & 1.48e-02 & 3.60e-01 & 1.32e-16 \\
 & 1/32 & 2.67e-03 & 1.66e-01 & 1.31e-16 & 2.72e-03 & 1.68e-01 & 1.37e-16 & 2.67e-03 & 1.66e-01 & 1.35e-16 \\
 & EOC & 2.471 & 1.117 & -- & 2.454 & 1.108 & -- & 2.471 & 1.117 & -- \\

\hline
\end{tabular}
\label{tab:1}
\end{table}

\begin{table}[htbp]
\centering
\fontsize{8.2}{10.2}\selectfont
\setlength{\tabcolsep}{2.1pt}
\renewcommand{\arraystretch}{1.16}
\caption{Numerical errors for $k=2$, $T=1$, and nominal time steps $\tau_{\rm nom}=h_{\rm nom}^{2.5}$. EOCs are $\log_2(E_{1/16}/E_{1/32})$, recalculated from the displayed rounded errors using the nominal mesh parameters. Divergence errors are at roundoff level and their EOCs are not reported.}

\begin{tabular}{c|c|ccc|ccc|ccc}
\hline
\multirow{2}{*}{$C_s$} & \multirow{2}{*}{$h_{\rm nom}$}
& \multicolumn{3}{c|}{$\nu=10^{0}$}
& \multicolumn{3}{c|}{$\nu=10^{-2}$}
& \multicolumn{3}{c}{$\nu=10^{-4}$} \\

&
& $E_u$ & $E_{\nabla u}$ & $E_{\rm div}$
& $E_u$ & $E_{\nabla u}$ & $E_{\rm div}$
& $E_u$ & $E_{\nabla u}$ & $E_{\rm div}$ \\

\hline

\multirow{5}{*}{$C_s=0$}
 & 1/4  & 1.39e-02 & 3.54e-01 & 1.88e-16  & 2.06e-02 & 3.90e-01 & 2.10e-16 & 3.62e-02 & 6.47e-01 & 2.32e-16 \\
 & 1/8  & 1.23e-03 & 7.68e-02 & 1.94e-16 & 3.90e-03 & 8.39e-02 & 1.84e-16 & 4.53e-03 & 1.06e-01 & 1.88e-16 \\
 & 1/16 & 1.16e-04 & 1.65e-02 & 1.74e-16 & 7.01e-04 & 1.76e-02 & 1.77e-16 & 7.28e-04 & 2.18e-02 & 1.68e-16 \\
 & 1/32 & 1.45e-05 & 4.03e-03 & 1.67e-16 & 1.24e-04 & 4.19e-03 & 1.70e-16 & 1.24e-04 & 4.98e-03 & 1.76e-16 \\
 & EOC & 3.000 & 2.034 & -- & 2.499 & 2.071 & -- & 2.554 & 2.130 & -- \\

\hline

\multirow{5}{*}{$C_s=0.1$}
 & 1/4  & 1.69e-02 & 3.74e-01 & 1.97e-16    & 5.04e-02 & 8.48e-01 & 1.78e-16  & 7.37e-02 & 1.14e+00 & 1.62e-16 \\
 & 1/8  & 1.34e-03 & 7.80e-02 & 1.89e-16 & 5.12e-03 & 1.68e-01 & 2.01e-16 & 1.14e-02 & 3.55e-01 & 1.85e-16 \\
 & 1/16 & 1.19e-04 & 1.66e-02 & 1.74e-16  & 6.94e-04 & 2.61e-02 & 1.71e-16 & 1.16e-03 & 1.01e-01 & 1.77e-16 \\
 & 1/32 & 1.46e-05 & 4.03e-03 & 1.65e-16 & 1.23e-04 & 4.75e-03 & 1.71e-16 & 1.56e-04 & 2.64e-02 & 1.73e-16 \\
 & EOC & 3.027 & 2.042 & -- & 2.496 & 2.458 & -- & 2.895 & 1.936 & -- \\

\hline
\hline

\multirow{2}{*}{$C_s$} & \multirow{2}{*}{$h_{\rm nom}$}
& \multicolumn{3}{c|}{$\nu=10^{-6}$}
& \multicolumn{3}{c|}{$\nu=10^{-8}$}
& \multicolumn{3}{c}{$\nu=10^{-10}$} \\

&
& $E_u$ & $E_{\nabla u}$ & $E_{\rm div}$
& $E_u$ & $E_{\nabla u}$ & $E_{\rm div}$
& $E_u$ & $E_{\nabla u}$ & $E_{\rm div}$ \\

\hline

\multirow{5}{*}{$C_s=0$}
 & 1/4  & 3.94e-02 & 7.23e-01 & 2.22e-16 & 3.94e-02 & 7.24e-01 & 2.17e-16 & 3.94e-02 & 7.24e-01 & 2.13e-16 \\
 & 1/8  & 4.96e-03 & 1.26e-01 & 1.79e-16 & 4.97e-03 & 1.27e-01 & 1.76e-16 & 4.97e-03 & 1.27e-01 & 1.88e-16 \\
 & 1/16 & 7.69e-04 & 2.41e-02 & 1.72e-16 & 7.70e-04 & 2.44e-02 & 1.71e-16 & 7.70e-04 & 2.44e-02 & 1.70e-16 \\
 & 1/32 & 1.33e-04 & 5.35e-03 & 1.67e-16 & 1.34e-04 & 5.45e-03 & 1.66e-16 & 1.34e-04 & 5.45e-03 & 1.66e-16 \\
 & EOC & 2.532 & 2.171 & -- & 2.523 & 2.163 & -- & 2.523 & 2.163 & -- \\
\hline

\multirow{5}{*}{$C_s=0.1$}
 & 1/4  & 7.42e-02 & 1.15e+00 & 1.63e-16 & 7.42e-02 & 1.15e+00 & 1.88e-16 & 7.42e-02 & 1.15e+00 & 1.80e-16 \\
 & 1/8  & 1.18e-02 & 3.70e-01 & 1.73e-16 & 1.18e-02 & 3.70e-01 & 1.69e-16 & 1.18e-02 & 3.70e-01 & 1.73e-16 \\
 & 1/16 & 1.38e-03 & 1.20e-01 & 1.71e-16 & 1.38e-03 & 1.20e-01 & 1.72e-16 & 1.38e-03 & 1.20e-01 & 1.75e-16 \\
 & 1/32 & 2.28e-04 & 4.56e-02 & 1.68e-16 & 2.30e-04 & 4.62e-02 & 1.73e-16 & 2.30e-04 & 4.62e-02 & 1.75e-16 \\
 & EOC & 2.598 & 1.396 & -- & 2.585 & 1.377 & -- & 2.585 & 1.377 & -- \\
\hline
\end{tabular}
\label{tab:2}
\end{table}

To further examine the behavior of the proposed exactly divergence-free, $H(\mathrm{div})$-conforming HDG method in a convection-dominated regime, we compare it with the classical Taylor--Hood discretization based on the continuous $P_2/P_1$ velocity--pressure pair. We set $\nu=10^{-4}$, $T=1$, and $\tau=10^{-3}$. Figure~\ref{fig:TH-HDG} presents the computed velocity magnitude for the unstabilized Taylor--Hood discretization at nominal mesh resolutions $1/32$, $1/64$, and $1/128$, together with the stabilized Taylor--Hood result for $C_s=0.1$ and the proposed HDG results for $C_s=0$ and $C_s=0.1$ at the nominal resolution $1/32$.

On the coarse mesh with nominal resolution $1/32$, the unstabilized Taylor--Hood discretization develops pronounced oscillations in the computed velocity field. Adding the eddy-viscosity stabilization with $C_s=0.1$ substantially suppresses these oscillations, although visible nonphysical features remain. In contrast, the proposed HDG method produces smooth velocity fields for both $C_s=0$ and $C_s=0.1$ on the same nominal mesh, while preserving the main flow structures.

For the unstabilized Taylor--Hood discretization, the oscillatory features progressively diminish with mesh refinement. A marked reduction is observed at the nominal resolution $1/64$, and the computed solution at $1/128$ is essentially free of visible oscillations. This behavior highlights the pronounced mesh sensitivity of the Taylor--Hood approximation in the considered convection-dominated regime. By comparison, the proposed HDG formulation maintains a stable and non-oscillatory approximation on the coarse mesh considered here, for both the Navier--Stokes case ($C_s=0$) and the Smagorinsky model ($C_s=0.1$).

\begin{figure}[htbp]
\centering
\PaperGraphic[width=0.24\textwidth]{0.24\textwidth}{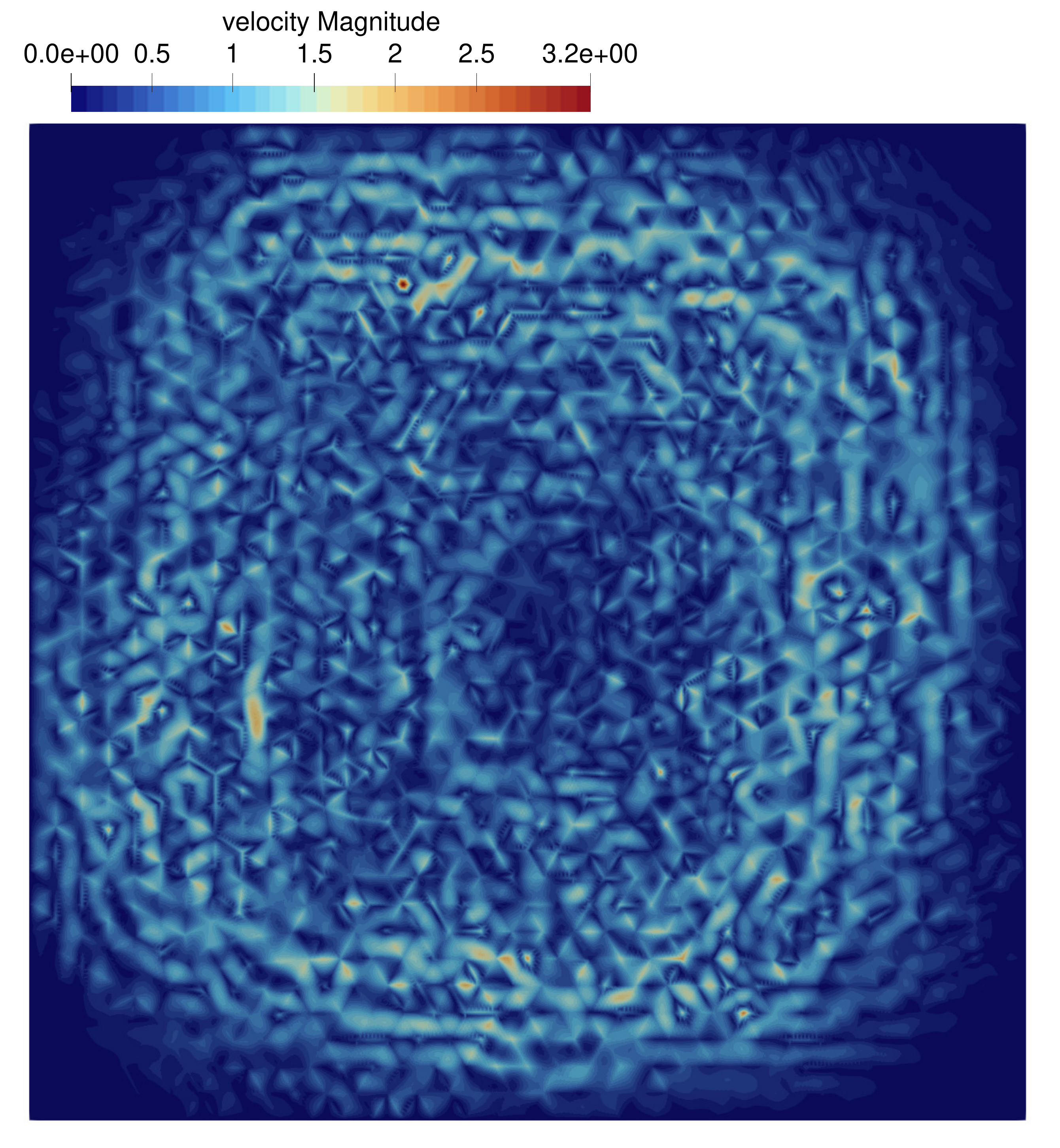}
\PaperGraphic[width=0.24\textwidth]{0.24\textwidth}{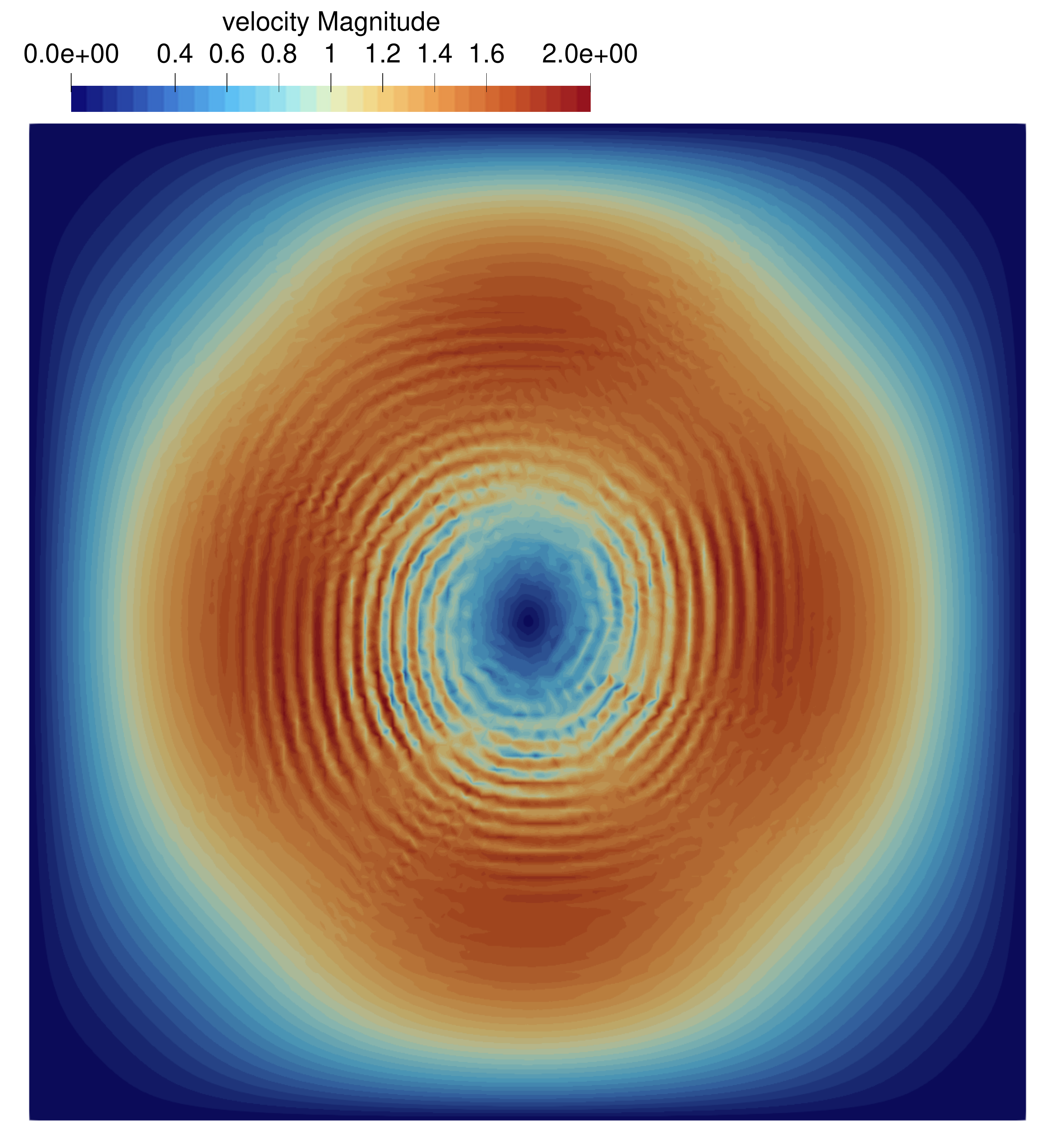}
\PaperGraphic[width=0.24\textwidth]{0.24\textwidth}{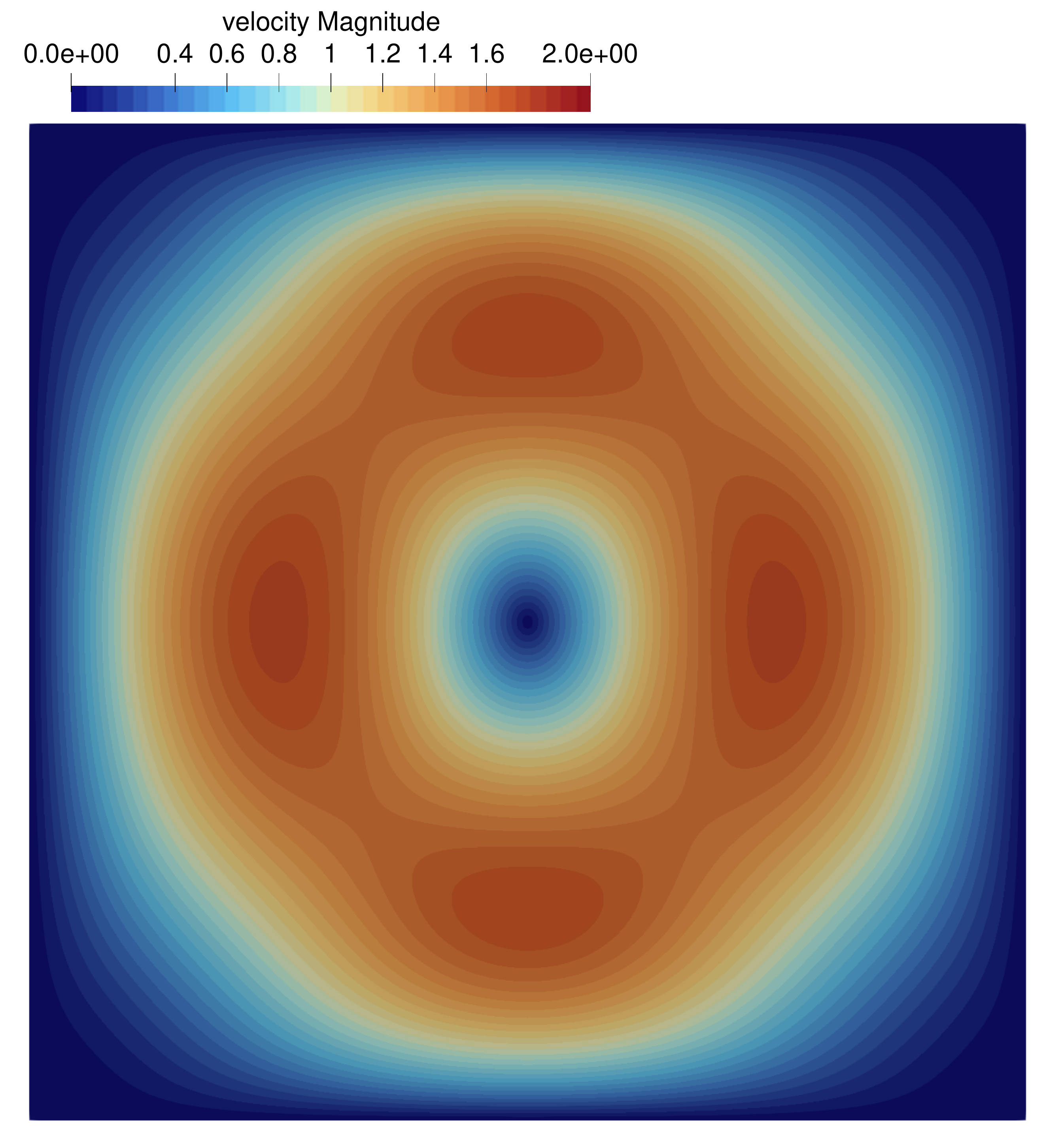}
\\
\PaperGraphic[width=0.24\textwidth]{0.24\textwidth}{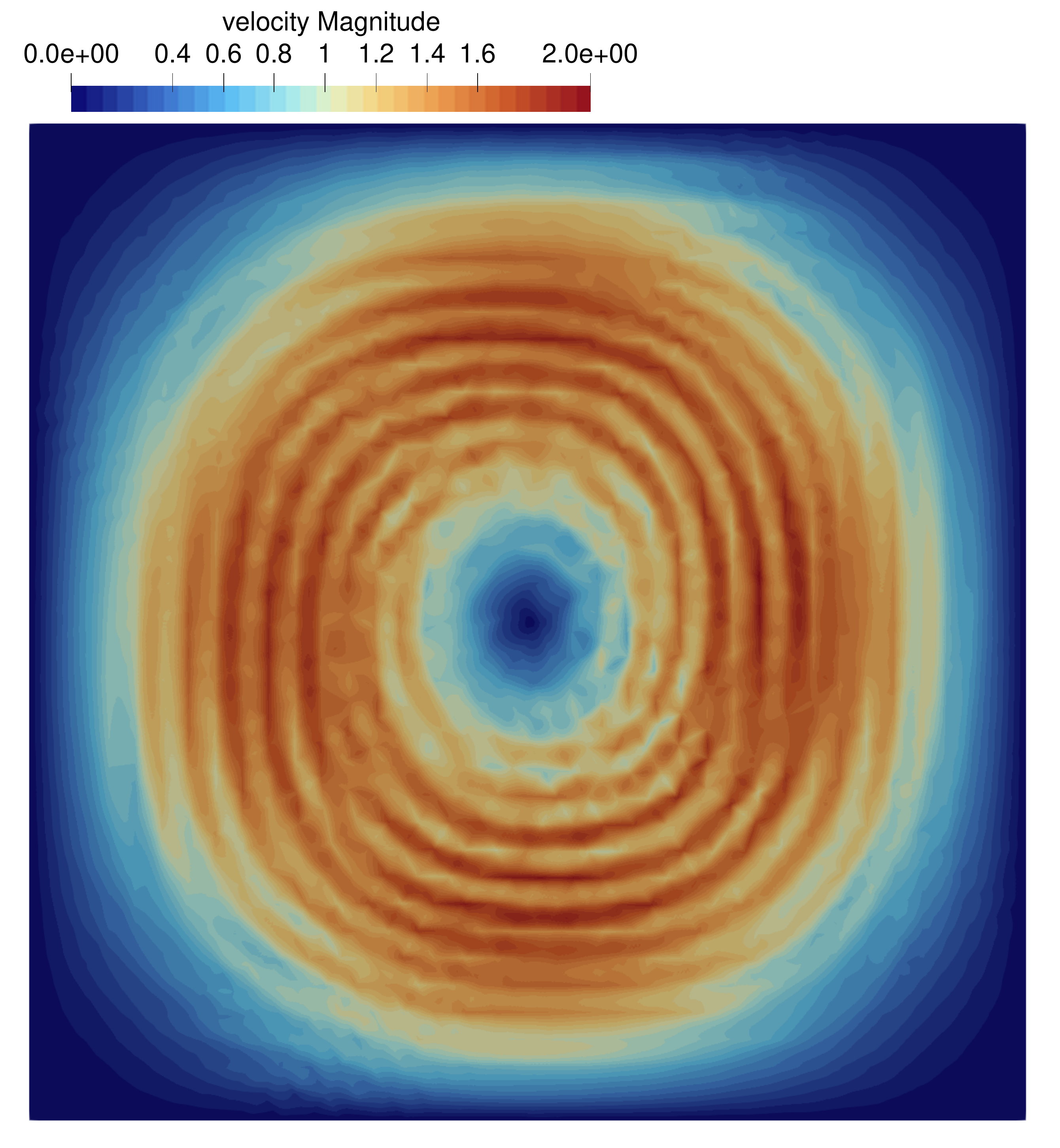}
\PaperGraphic[width=0.24\textwidth]{0.24\textwidth}{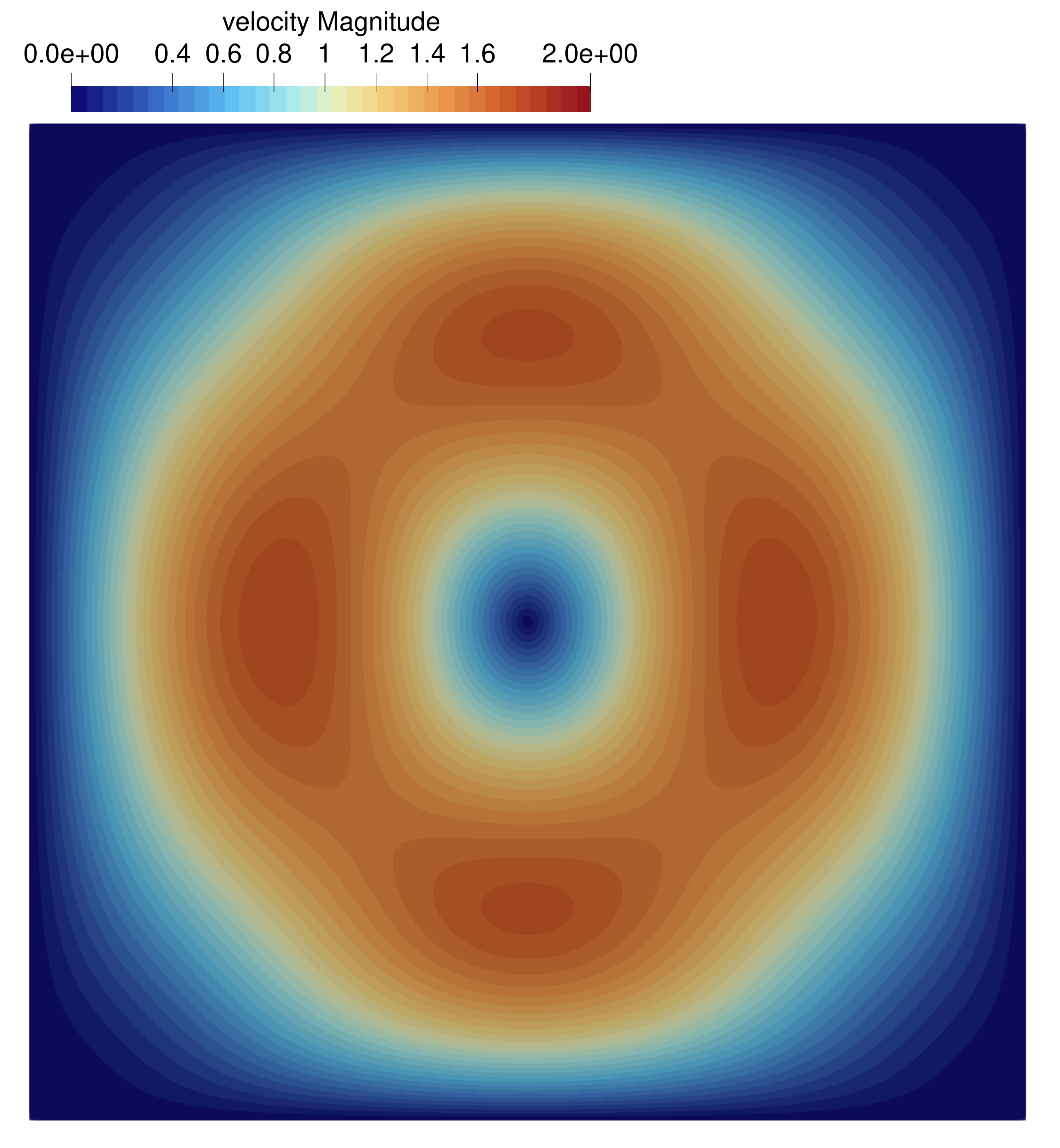}
\PaperGraphic[width=0.24\textwidth]{0.24\textwidth}{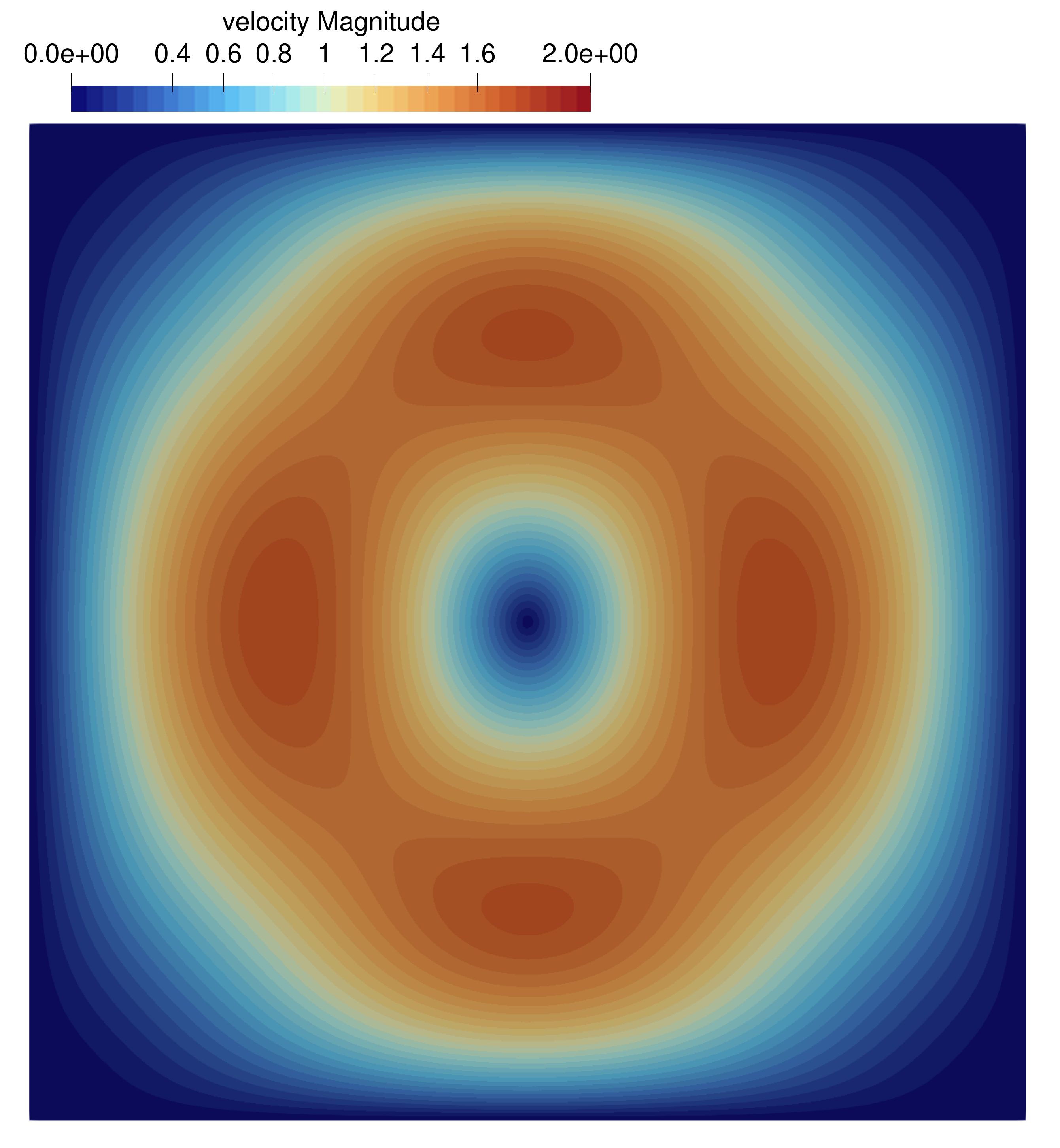}
\caption{\footnotesize
Velocity magnitude for $\nu = 10^{-4}$ and $k=2$.
Top row (left to right): Taylor-Hood solutions with $C_s=0$ on meshes
$h=1/32$, $1/64$, and $1/128$.
Bottom row (left to right): Taylor-Hood ($C_s=0.1$), HDG ($C_s=0$), and HDG ($C_s=0.1$) solutions on the mesh with $h=1/32$.
}
\label{fig:TH-HDG}
\end{figure}

\subsection{Vortex shedding behind a cylinder}
The third example investigates the influence of dissipation on the formation of the von K\'arm\'an vortex street behind a circular cylinder. The computational domain is $(-\tfrac{1}{2}, 2)\times(-\tfrac{1}{2},\,\tfrac{1}{2})$, and contains a circular cylinder centered at the origin with radius $r=0.1$. The boundary conditions are specified as follows:
\begin{itemize}
\item $\w =\bm 0$ on the cylinder boundary and on the walls $y=\pm \tfrac{1}{2}$ (no-slip condition);
\item homogeneous Neumann condition on the outflow boundary $x=2$;
\item prescribed parabolic inflow profile at $x=-\tfrac{1}{2}$, $\mathbf u = (\tfrac{3}{2} - 6y^2,\,0).$
\end{itemize}

The initial condition is taken as the stationary Stokes solution associated with the prescribed boundary conditions. The computational mesh employed in the simulations is generated using NETGEN \cite{Sch1997} and consists of 4295 triangular elements with local refinement around the cylinder and in the wake region to accurately resolve the shear layers and vortex formation (see Fig.~\ref{fig:test2:mesh}). The time-step size is chosen as $\tau = 0.01$, the polynomial degree is set to $k=3$, and the viscosity is set to $\nu = 10^{-4}$, corresponding to a convection-dominated flow regime. To investigate the influence of the additional dissipation introduced by the stabilization, two cases are considered: the Navier-Stokes equations ($C_s=0$) and the Smagorinsky model ($C_s=0.1$).

\begin{figure}[htbp]
\centering
\includegraphics[width=0.8\textwidth]{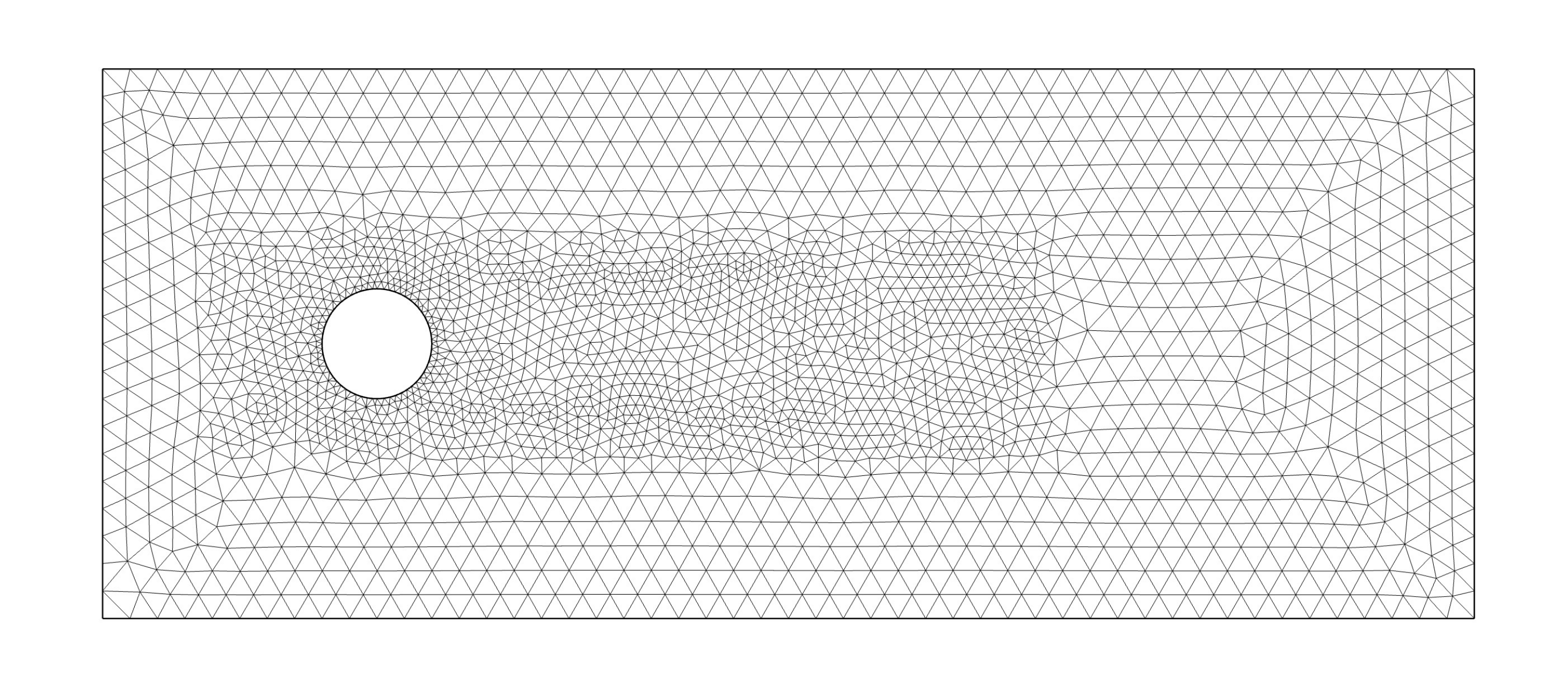}
\caption{\footnotesize Computational mesh used for the vortex shedding computation. 
The mesh consists of 4295 elements with local refinement around the cylinder and in the wake region.}
\label{fig:test2:mesh}
\end{figure}

Fig.~\ref{fig:test2:velocity} presents the velocity magnitude contours at two representative time instances, $t=8$ and $t=10$. In both cases, the characteristic vortex shedding pattern behind the cylinder is clearly observed. Alternating vortices are generated in the shear layers separating from the cylinder surface and convected downstream, forming a typical von K\'arm\'an vortex street. Comparing the two rows of Fig.~\ref{fig:test2:velocity}, the overall large-scale flow structures remain similar in both simulations.

\begin{figure}[htbp]
\centering
\includegraphics[width=0.48\textwidth]{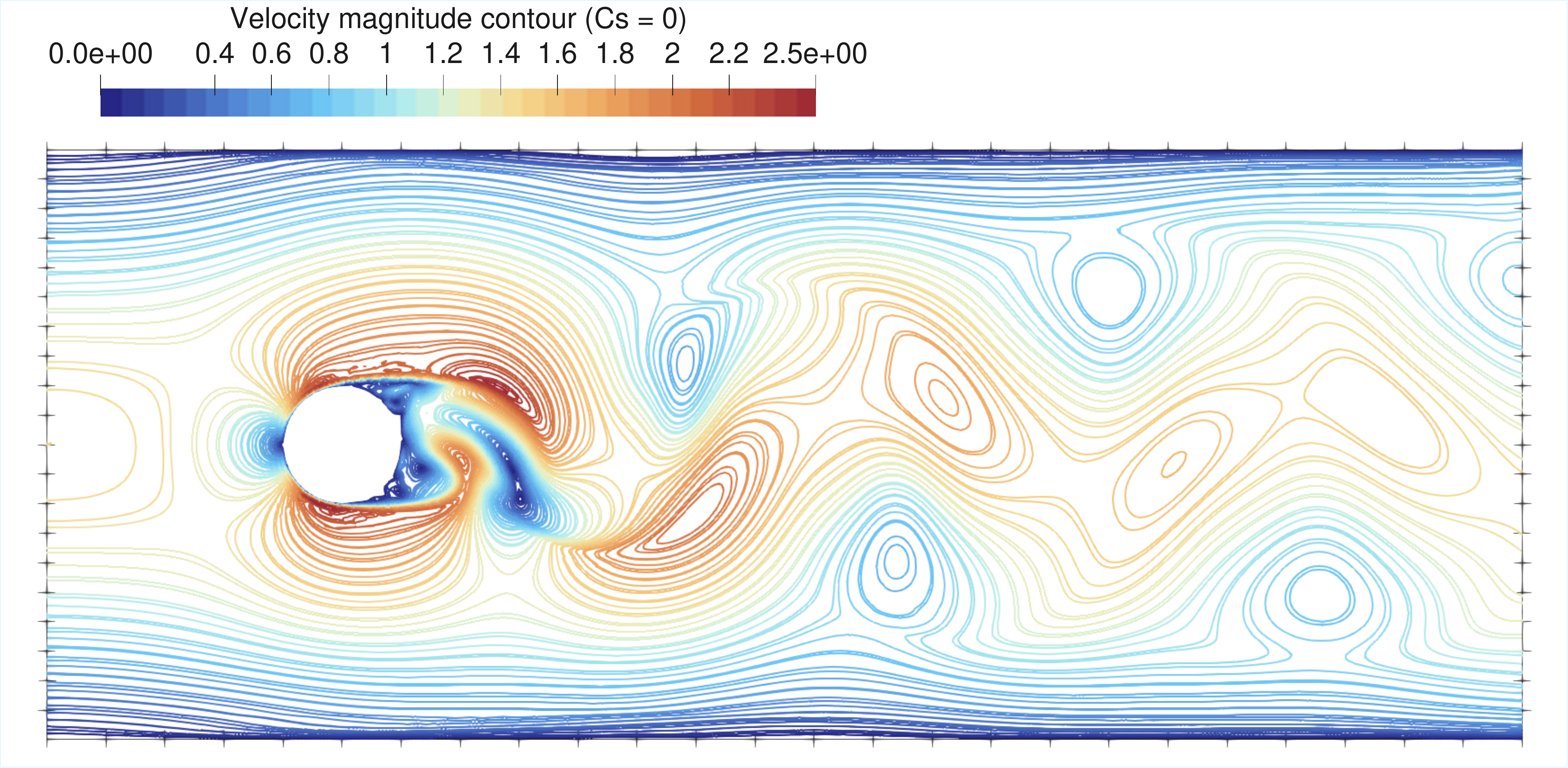}
\includegraphics[width=0.48\textwidth]{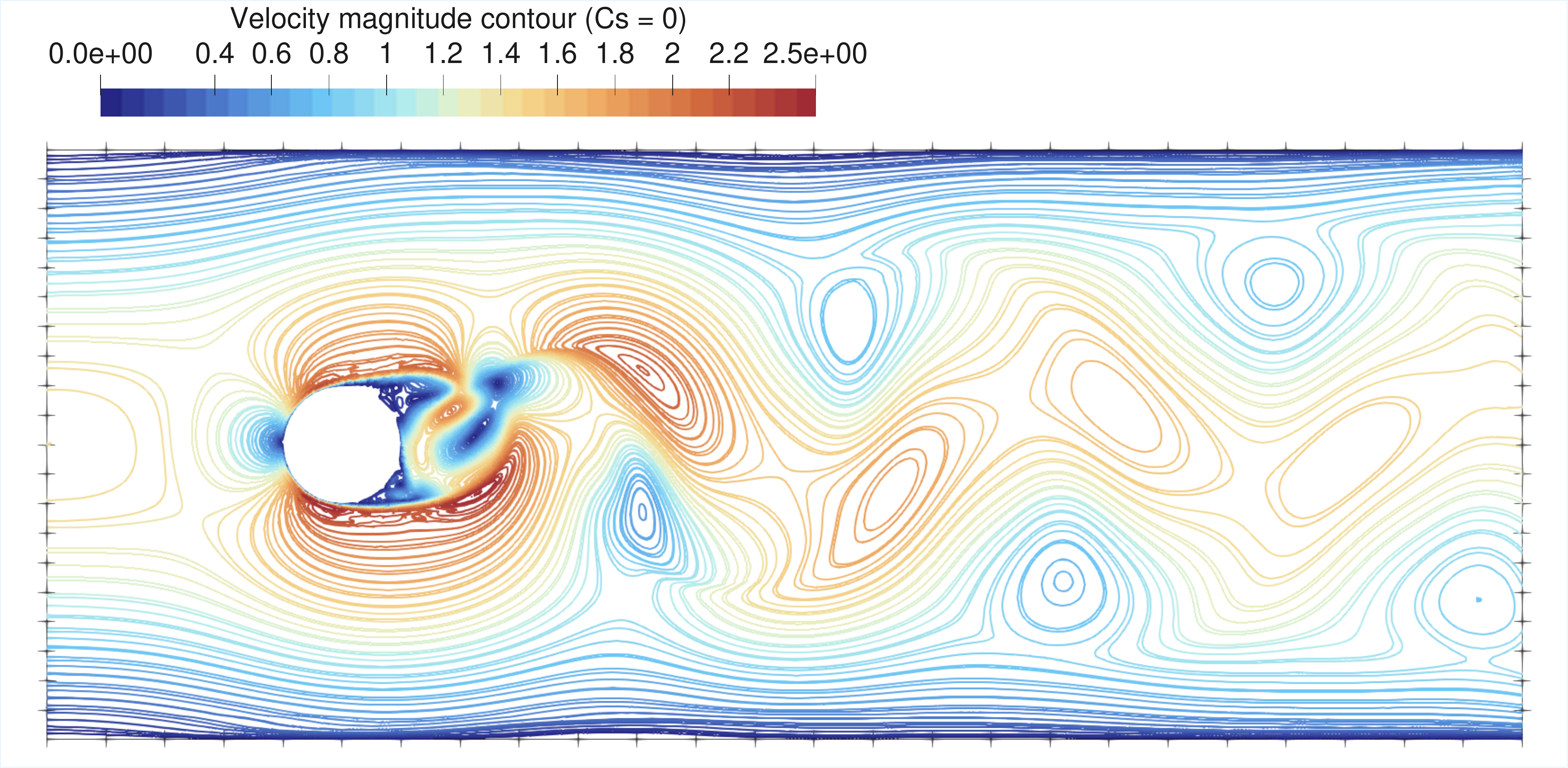}
\vspace{0.3cm}
\includegraphics[width=0.48\textwidth]{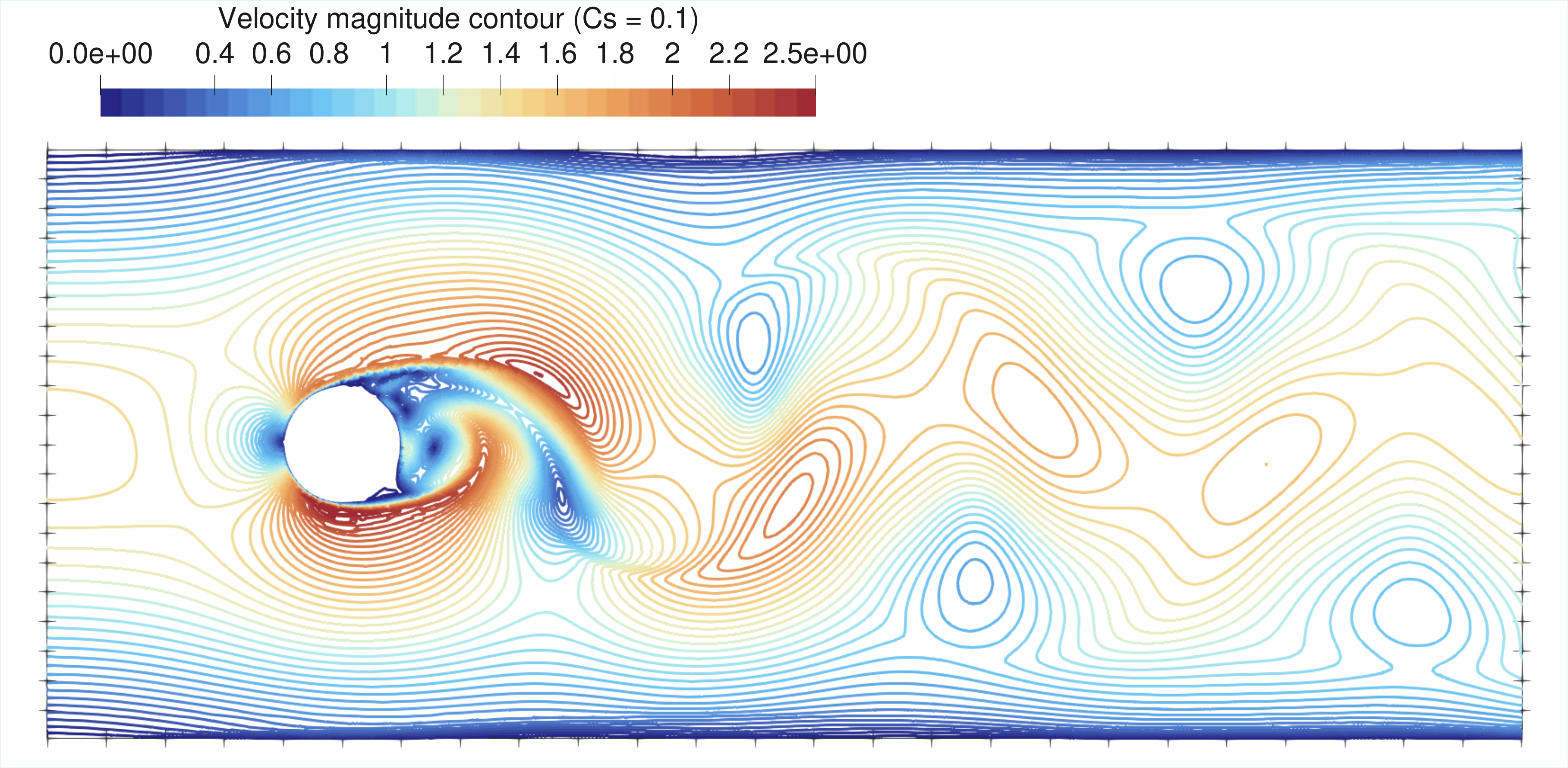}
\includegraphics[width=0.48\textwidth]{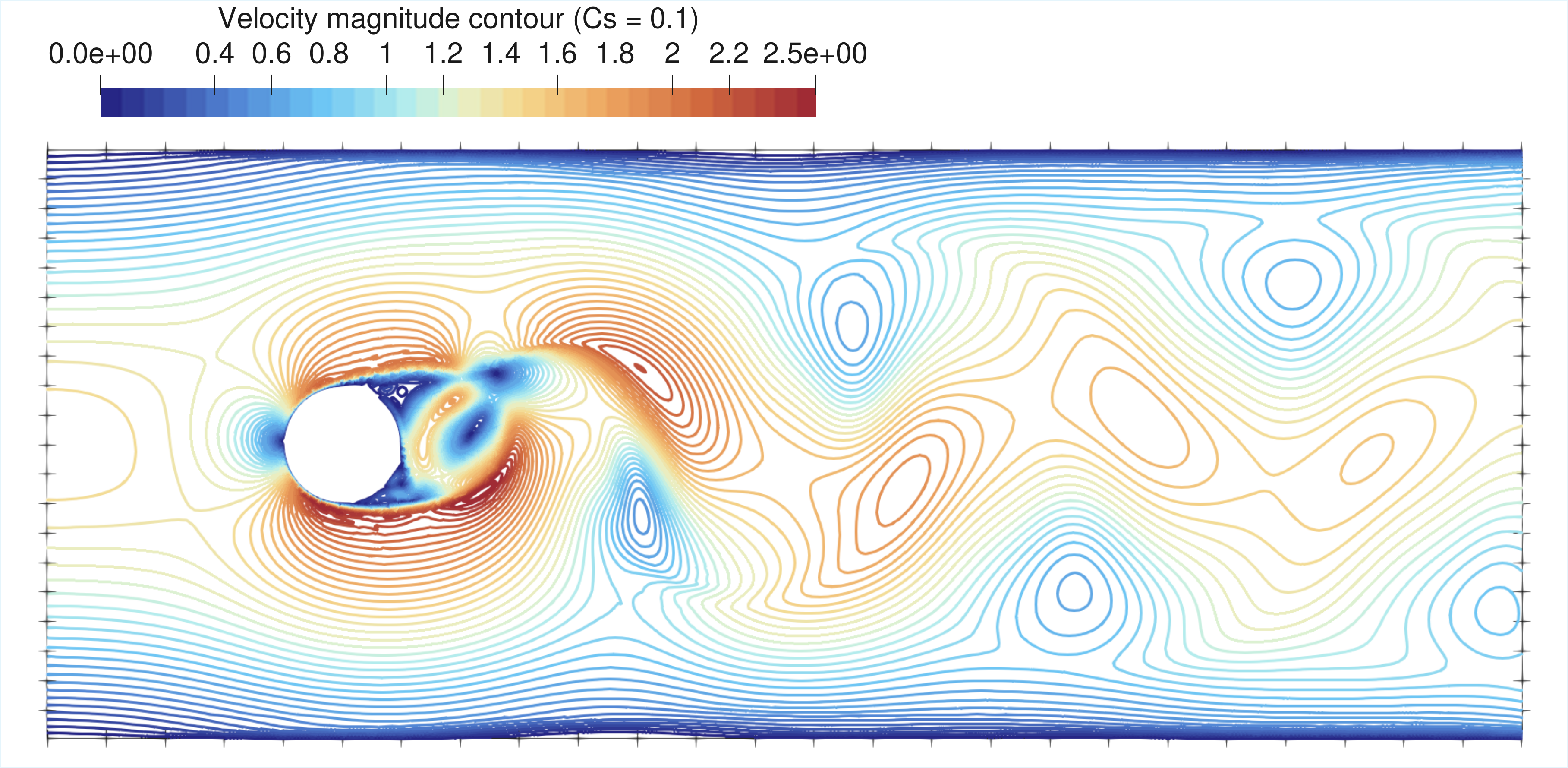}
\caption{\footnotesize Velocity magnitude contours for $\nu=10^{-4}$. 
Top row: $C_s=0$; bottom row: $C_s=0.1$. 
Columns correspond to $t=8$ and $t=10$.}
\label{fig:test2:velocity}
\end{figure}

The quantities of interest are the lift and drag forces acting on the cylinder,
\[
(F_D, F_L)= \int_S \left(-r_h\bm I + \nu\nabla {\w}_h\right)\mathbf{n}\,ds,
\]
where $S$ denotes the boundary between the fluid domain and the cylinder. The quantitative influence of the stabilization is further illustrated in Fig.~\ref{fig:test2:forces}, which presents the time histories of the drag and lift forces acting on the cylinder. After a short transient phase, the flow reaches a statistically periodic regime associated with vortex shedding. Both models exhibit nearly identical shedding frequencies, indicating that the additional dissipation does not significantly affect the dominant wake dynamics.

However, noticeable differences in the force amplitudes are observed in Fig.~\ref{fig:test2:forces}. 
In particular, for the present configuration, the Smagorinsky solution ($C_s=0.1$) exhibits slightly larger lift oscillations and a higher mean drag compared to the Navier-Stokes solution ($C_s=0$).  At the same time, the dominant periodic behavior of the flow remains comparable in both cases.

\begin{figure}[htbp]
\centering
\includegraphics[width=0.9\textwidth]{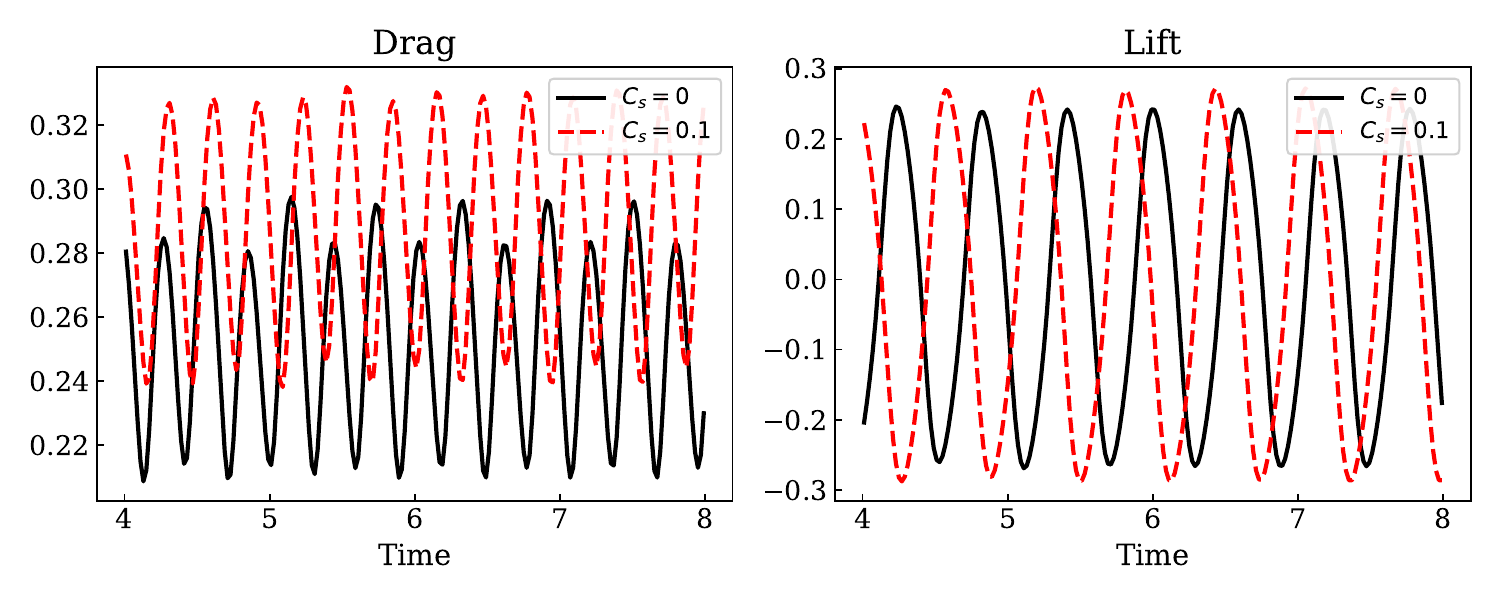}
\caption{\footnotesize Evolution of the drag (left) and lift (right) forces on the cylinder for $C_s=0$ and $C_s=0.1$.}
\label{fig:test2:forces}
\end{figure}

These observations indicate that the additional dissipation introduced by the Smagorinsky model affects the force amplitudes, while the overall vortex-shedding dynamics are largely preserved.

\subsection{Double shear layers}
We consider the double shear-layer problem~\cite{Bell}. In the absence of molecular viscosity ($\nu=0$), \eqref{Smagorinsky_eq} reduces to the Euler equations when $C_s=0$, whereas $C_s>0$ introduces the Smagorinsky subgrid-scale regularization.
The computational domain is $\Omega = (0,2\pi)^2$, with the following initial conditions:
\begin{align*}
u_y(x,y,0) = \kappa \sin(x), \qquad
u_x(x,y,0) =
\begin{cases}
\tanh\!\left(\frac{y-\pi/2}{\rho}\right), & y \le \pi, \\[6pt]
\tanh\!\left(\frac{3\pi/2-y}{\rho}\right), & y > \pi .
\end{cases}
\end{align*}
These initial conditions produce two horizontal shear layers perturbed by a small vertical velocity component. We take $\rho=\pi/15$ and $\kappa=0.05$, and impose periodic boundary conditions.

To illustrate the resolution of flow structures, we plot $99$ equally spaced contours of the discrete vorticity
\[
\omega_h := \mathrm{curl}(\w_h)
= \partial_{x_1} w_2 - \partial_{x_2} w_1,
\]
in the range $[-4.9,\,4.9]$. 
The domain $(0,2\pi)^2$ is discretized by structured uniform triangulations with $N=64$ and $N=128$ subdivisions per side.
In all simulations, the time step is fixed as $\tau = 0.01$, polynomial degrees $k=2$ and the vorticity contours are reported at $t=6,\,8,\,10,$ and $12$.

Figure~\ref{fig:test3:1} shows the vorticity contours obtained without additional regularization ($C_s=0$), corresponding to the inviscid Euler equations. As the flow evolves, the shear layers roll up and form large coherent vortices. At later times, nonlinear advection generates increasingly thin vorticity filaments. On the coarse mesh ($N=64$), these fine-scale structures become under-resolved and give rise to oscillatory patterns in the vorticity field. The refined mesh ($N=128$) resolves these structures more accurately and produces a smoother vorticity distribution.

Figure~\ref{fig:test3:2} presents the results obtained by adding a Smagorinsky viscosity term with $C_s = 0.1$. The mesh resolutions and output times are identical to those used in Figure~\ref{fig:test3:1}. Compared with the pure Euler simulations, the additional turbulent viscosity effectively suppresses under-resolved small-scale oscillations, particularly on the coarse mesh, while the large-scale vortex structures remain essentially unchanged. This suggests that the added turbulent viscosity introduces controlled dissipation and acts as a subgrid-scale regularization mechanism that stabilizes the numerical solution while preserving the dominant flow dynamics. 

The qualitative agreement between the coarse and refined meshes indicates that the dominant flow dynamics are well captured by the proposed $H(\rm{div})$-conforming HDG method~\eqref{eq:fulld}.

\begin{figure}[htbp]
\centering
\includegraphics[width=0.24\textwidth]{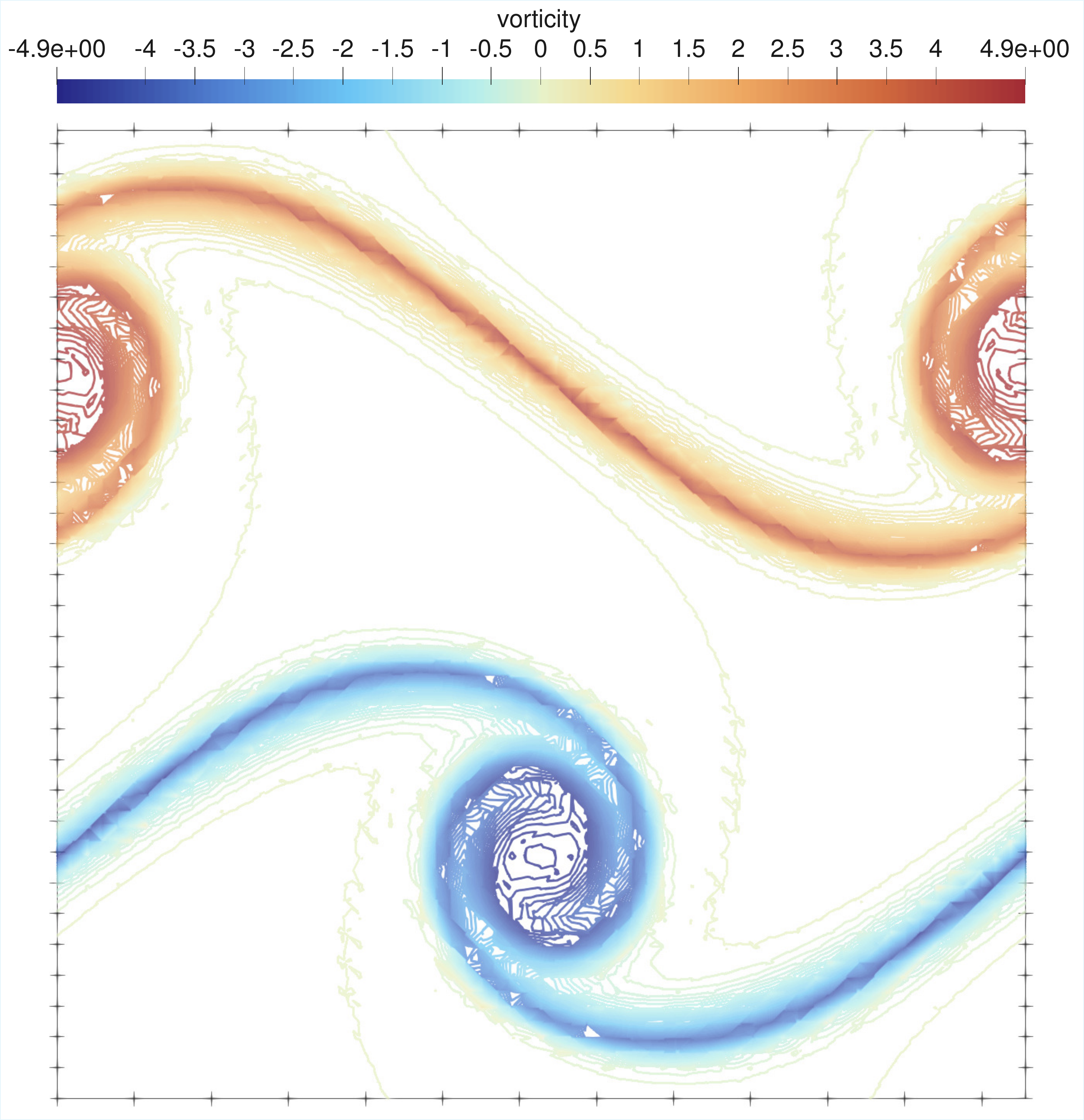}%
\includegraphics[width=0.24\textwidth]{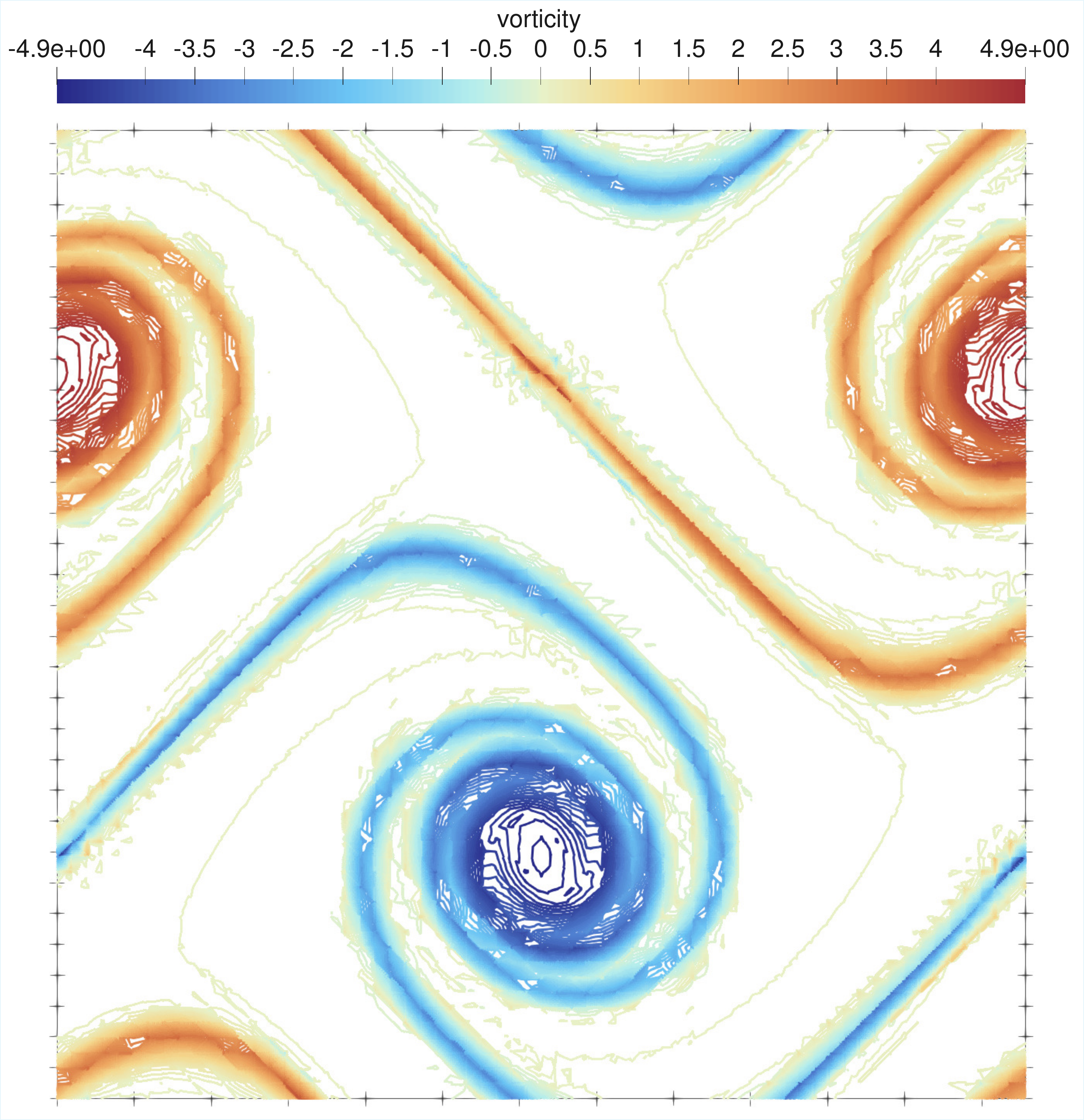}%
\includegraphics[width=0.24\textwidth]{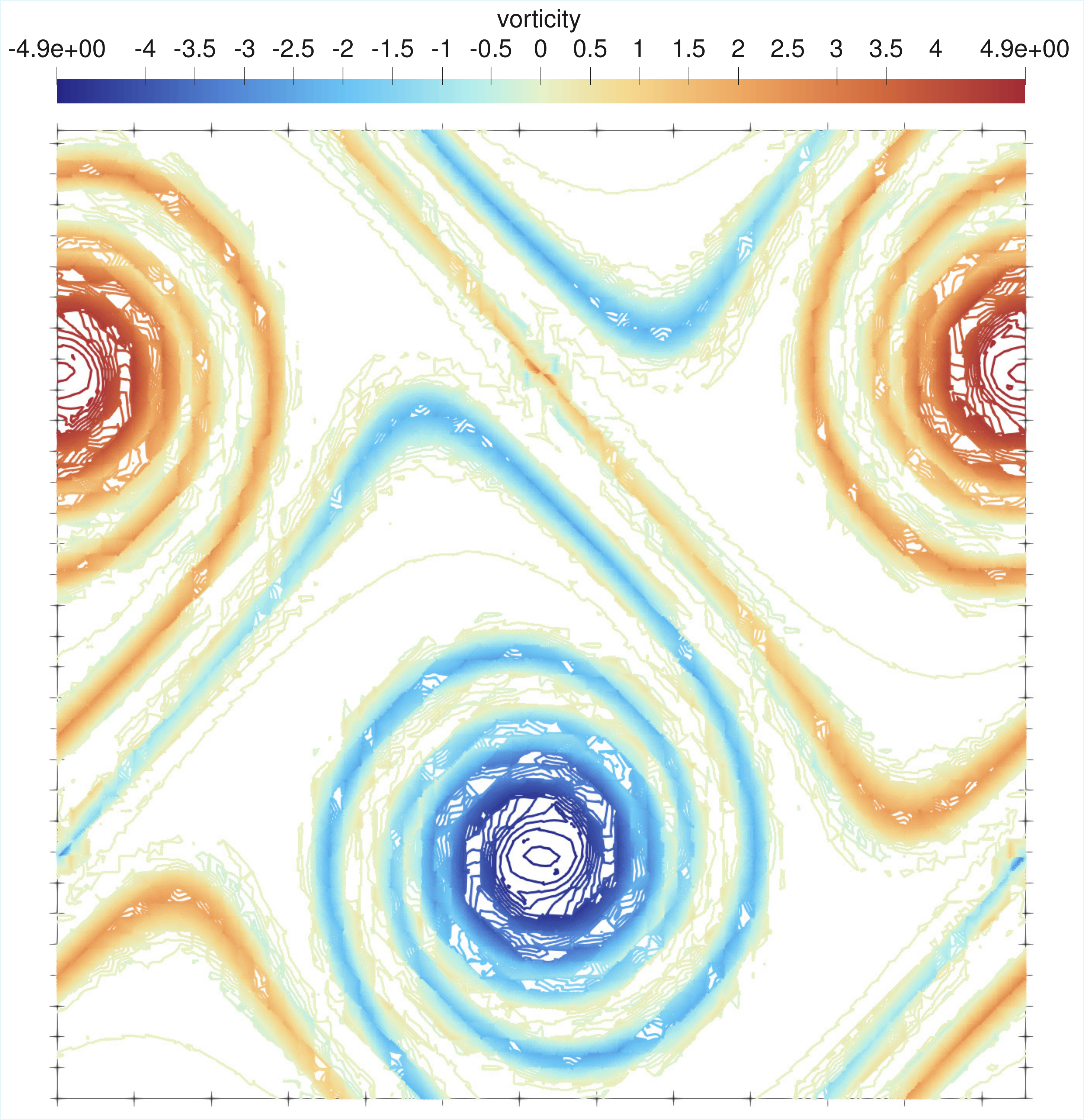}%
\includegraphics[width=0.24\textwidth]{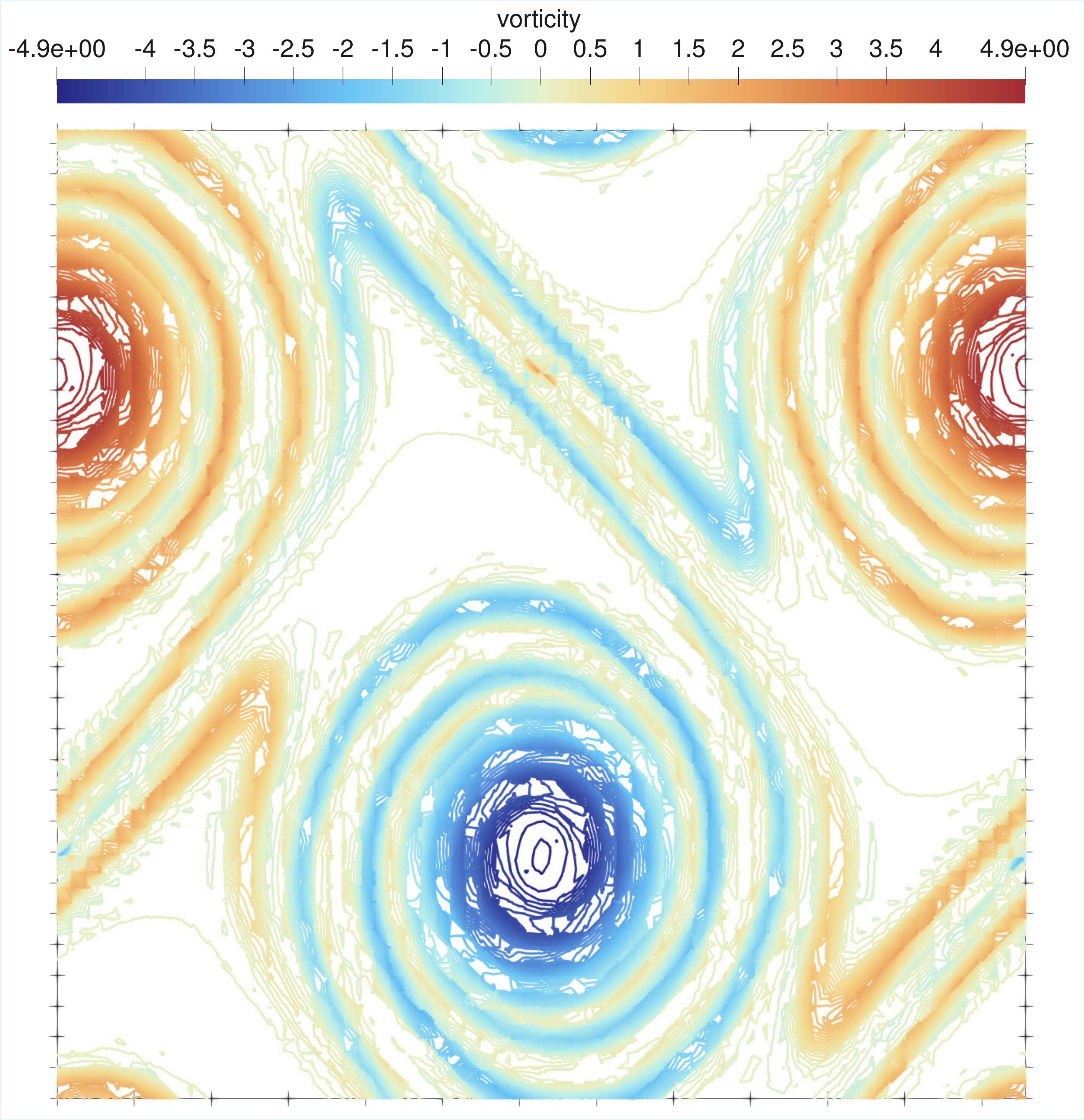}

\includegraphics[width=0.24\textwidth]{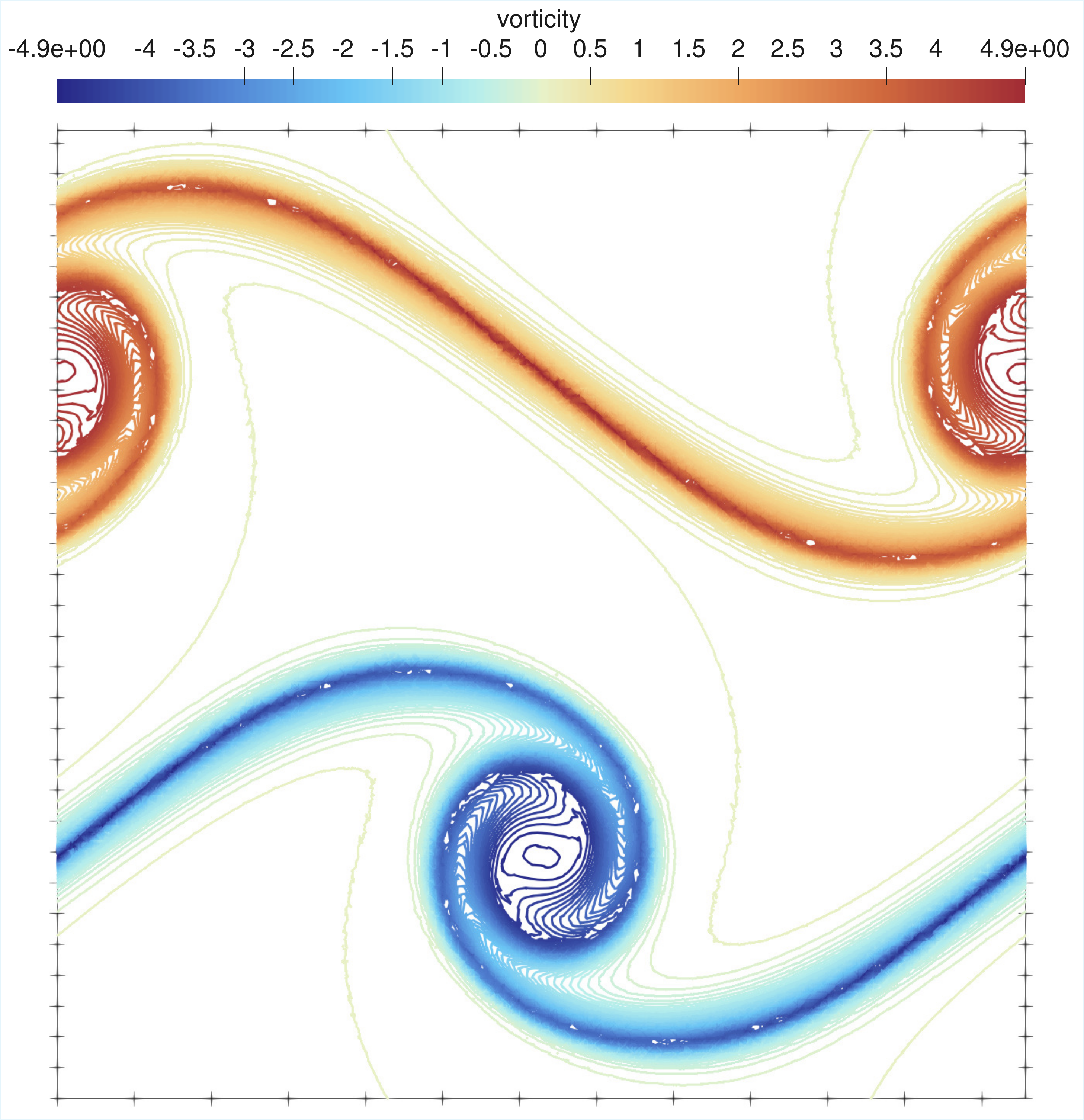}%
\includegraphics[width=0.24\textwidth]{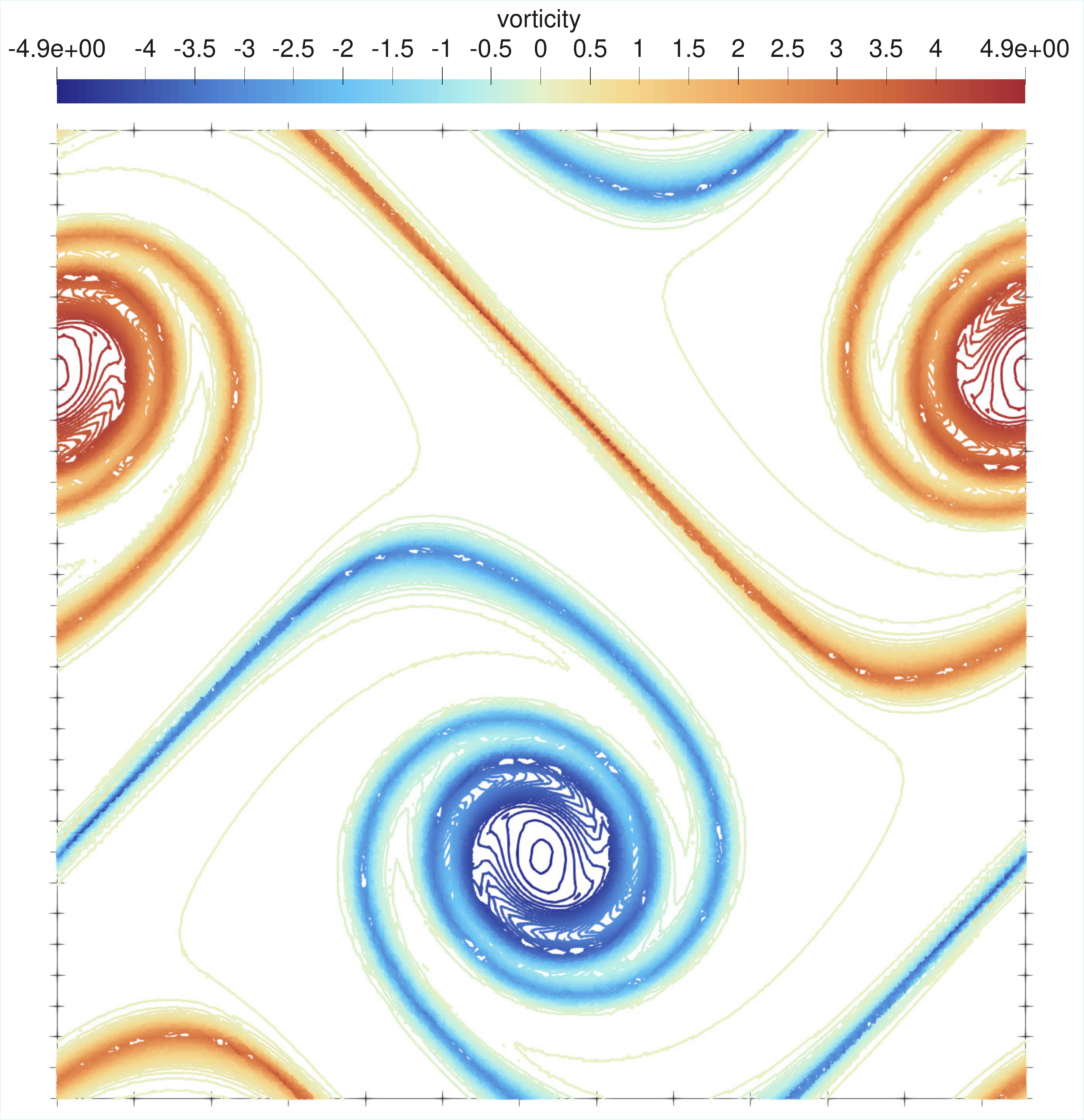}%
\includegraphics[width=0.24\textwidth]{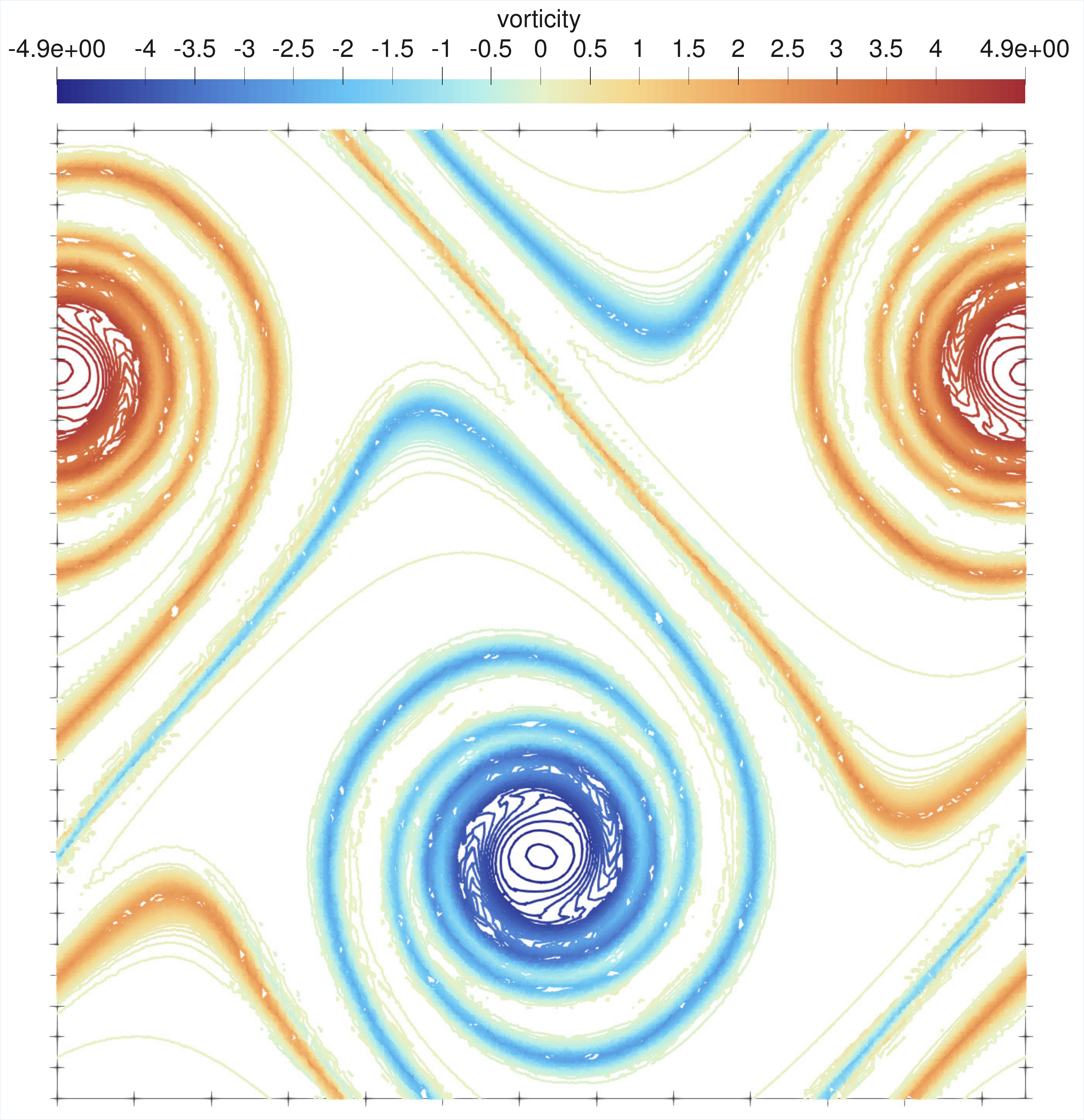}%
\includegraphics[width=0.24\textwidth]{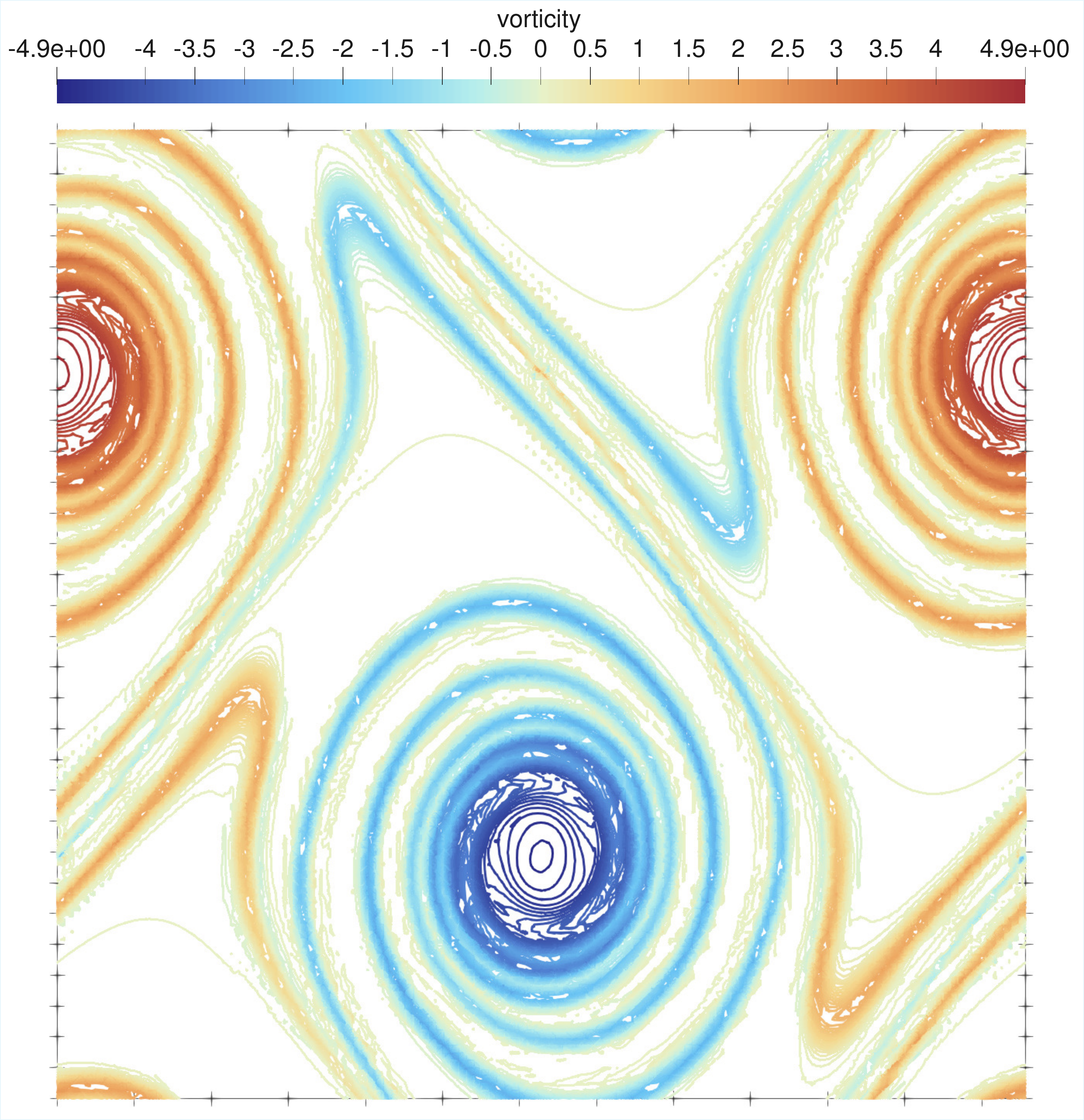}

\caption{\footnotesize Vorticity contours for the double shear layer problem ($\nu=0$ and $C_s=0$). First row: $N=64$; second row: $N=128$. Columns correspond to $T=6,8,10,12$.}
\label{fig:test3:1}
\end{figure}

\begin{figure}[htbp]
\centering
\includegraphics[width=0.24\textwidth]{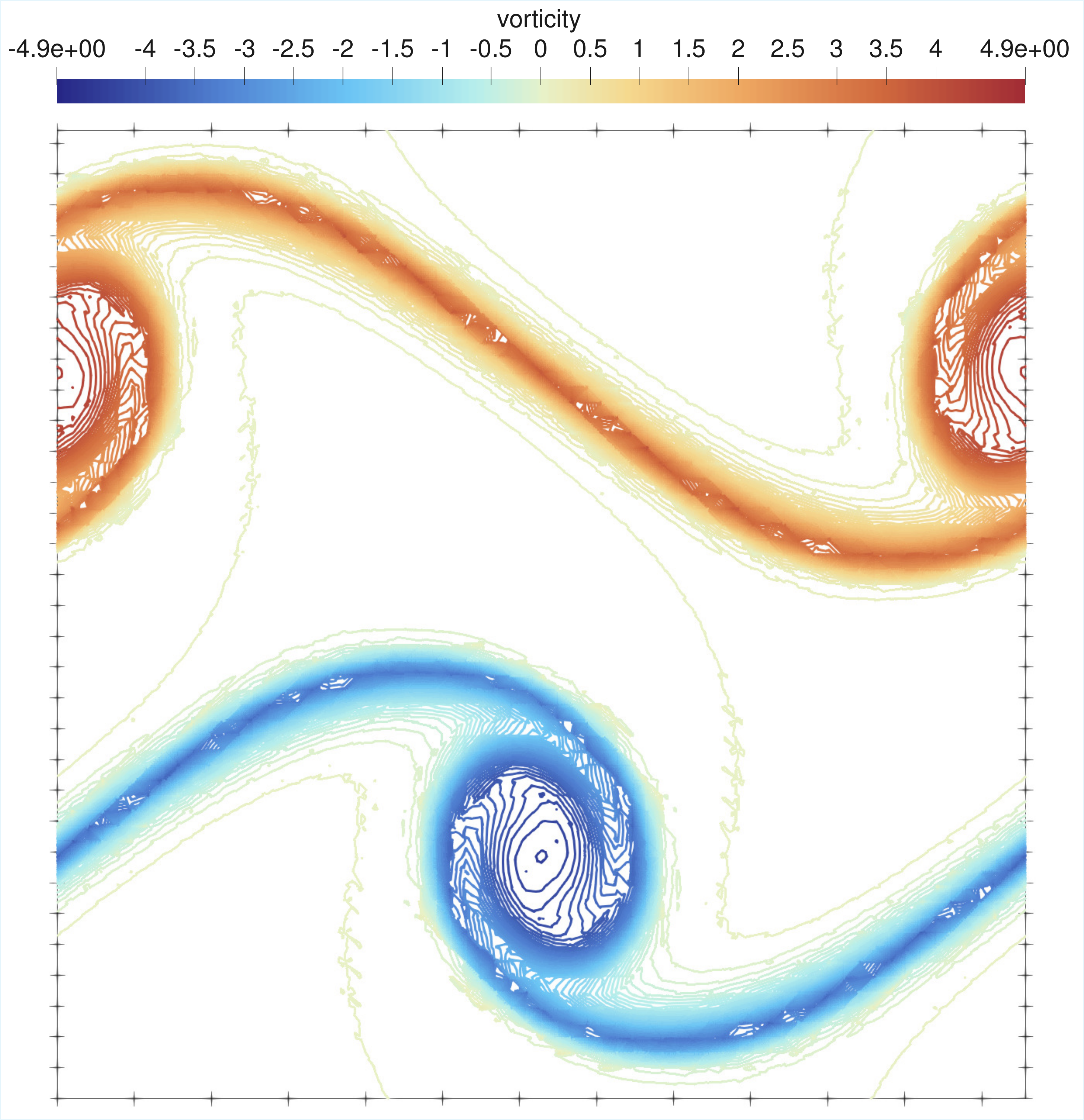}%
\includegraphics[width=0.24\textwidth]{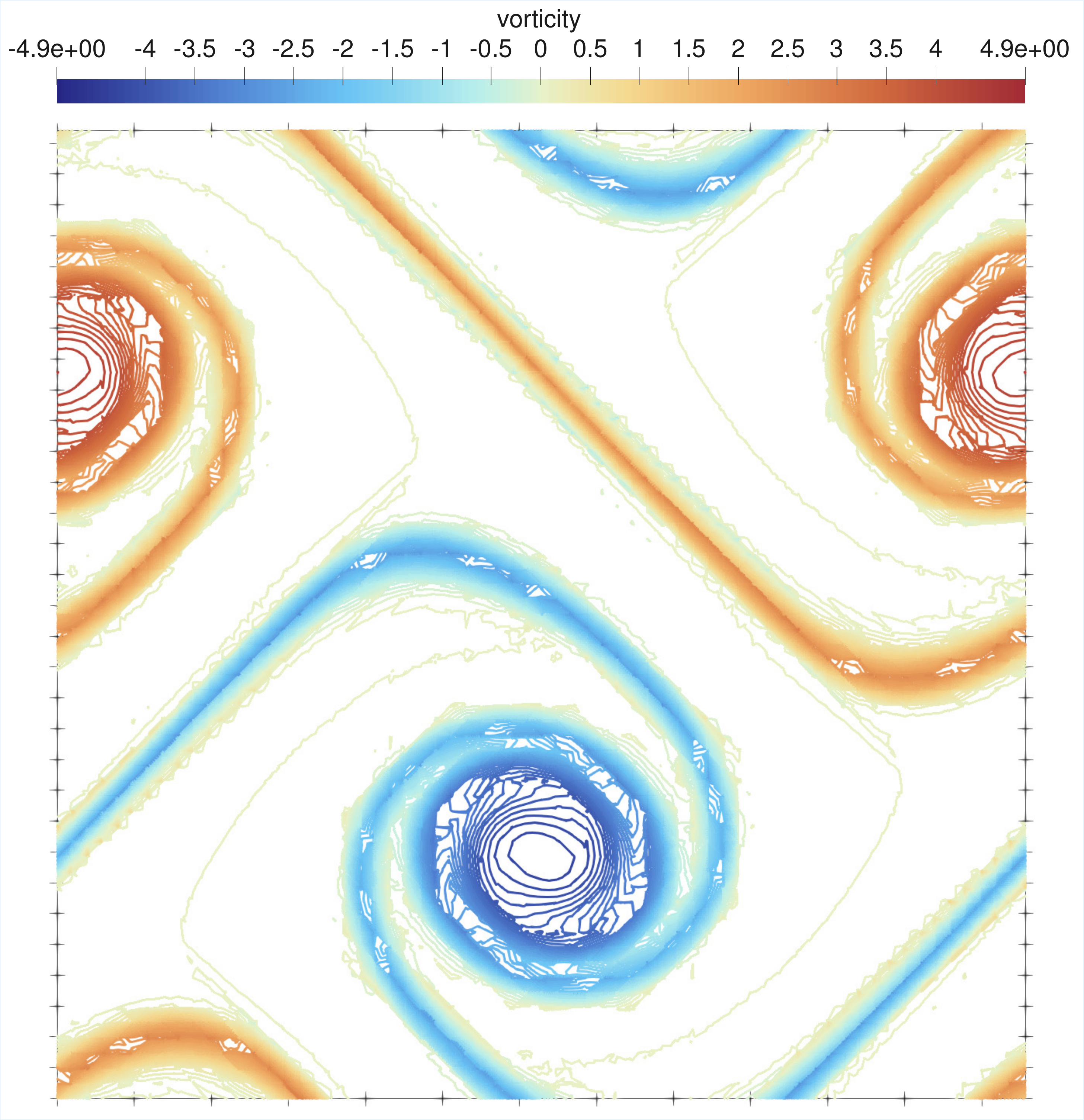}%
\includegraphics[width=0.24\textwidth]{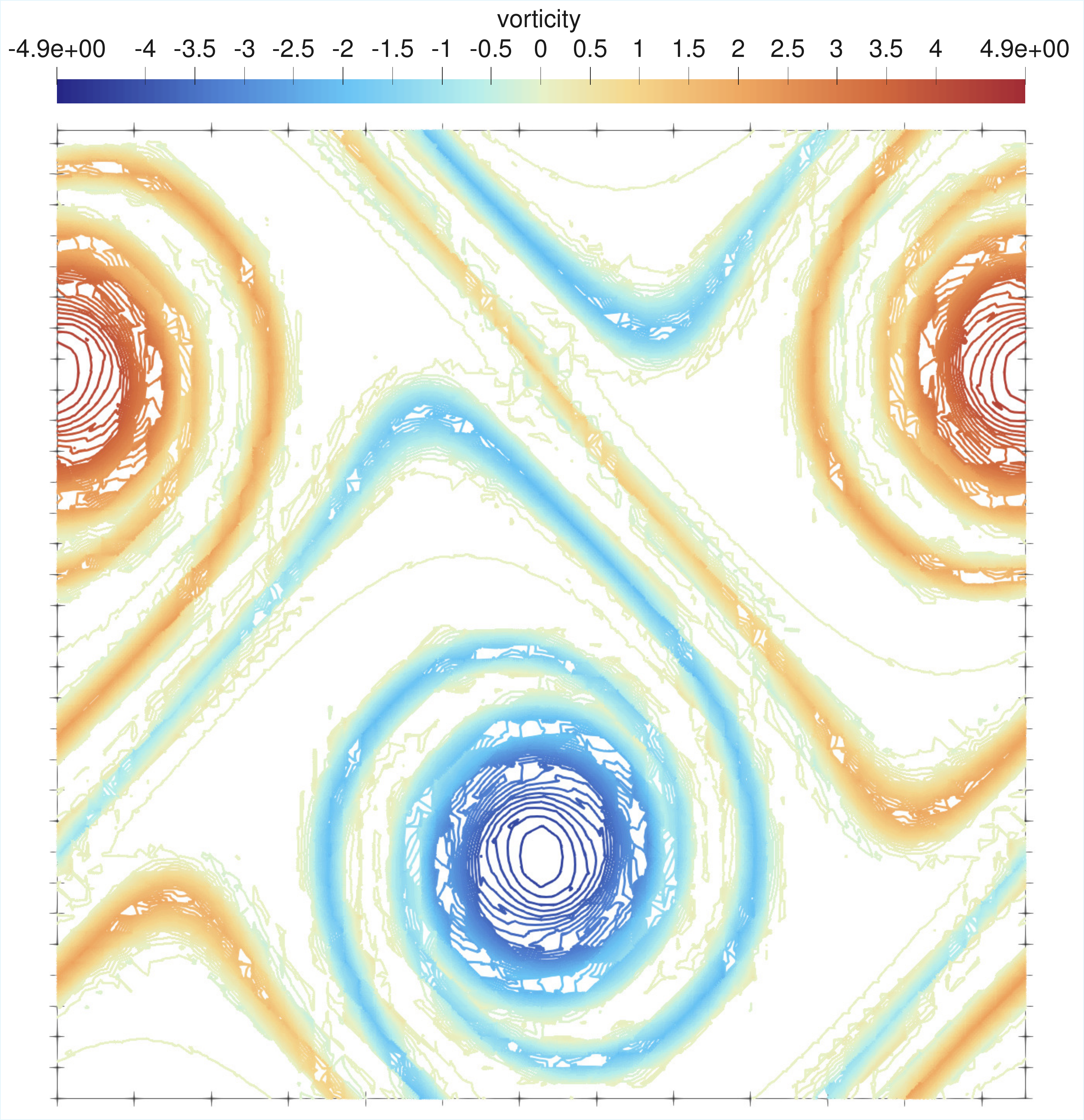}%
\includegraphics[width=0.24\textwidth]{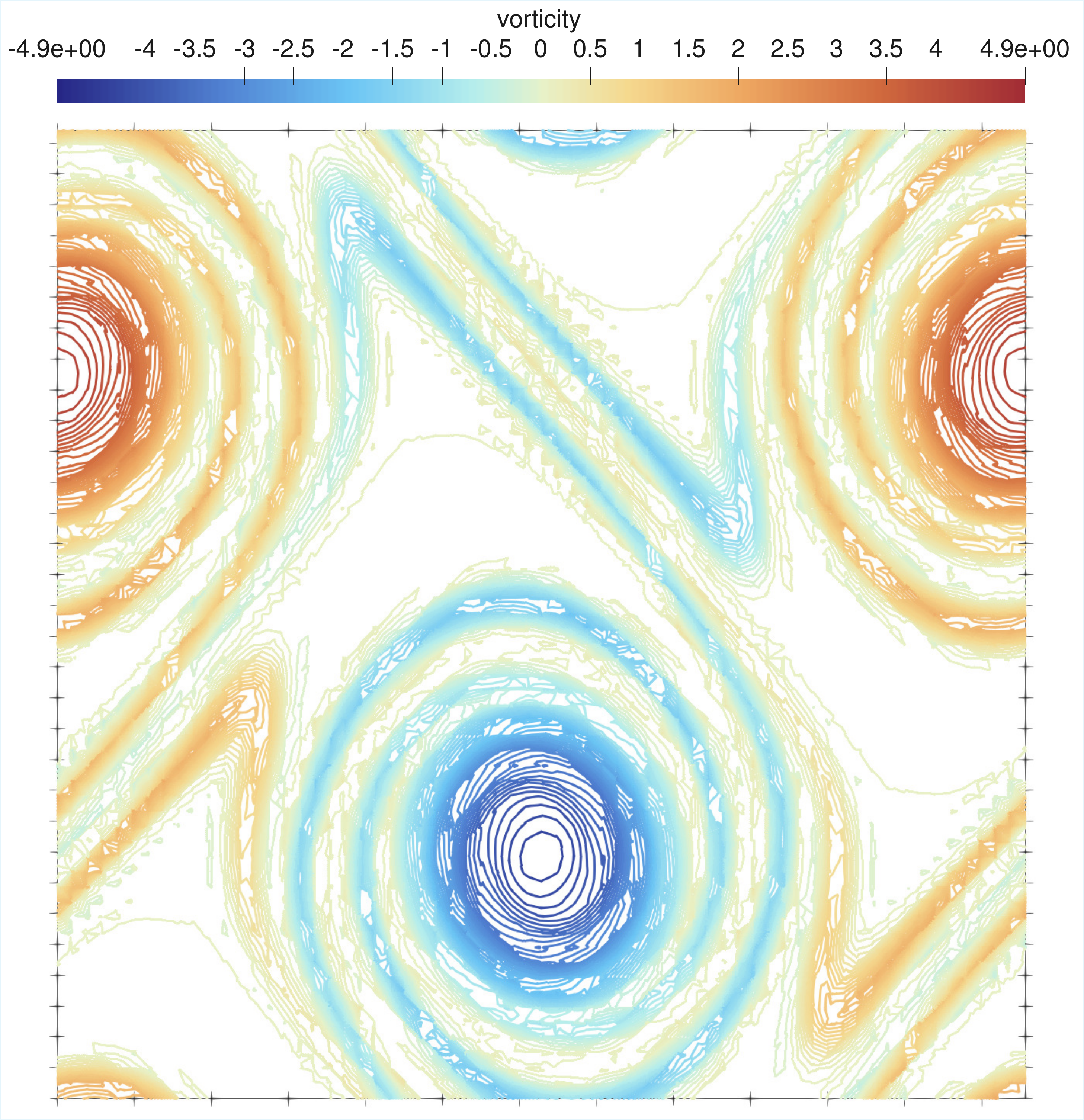}

\includegraphics[width=0.24\textwidth]{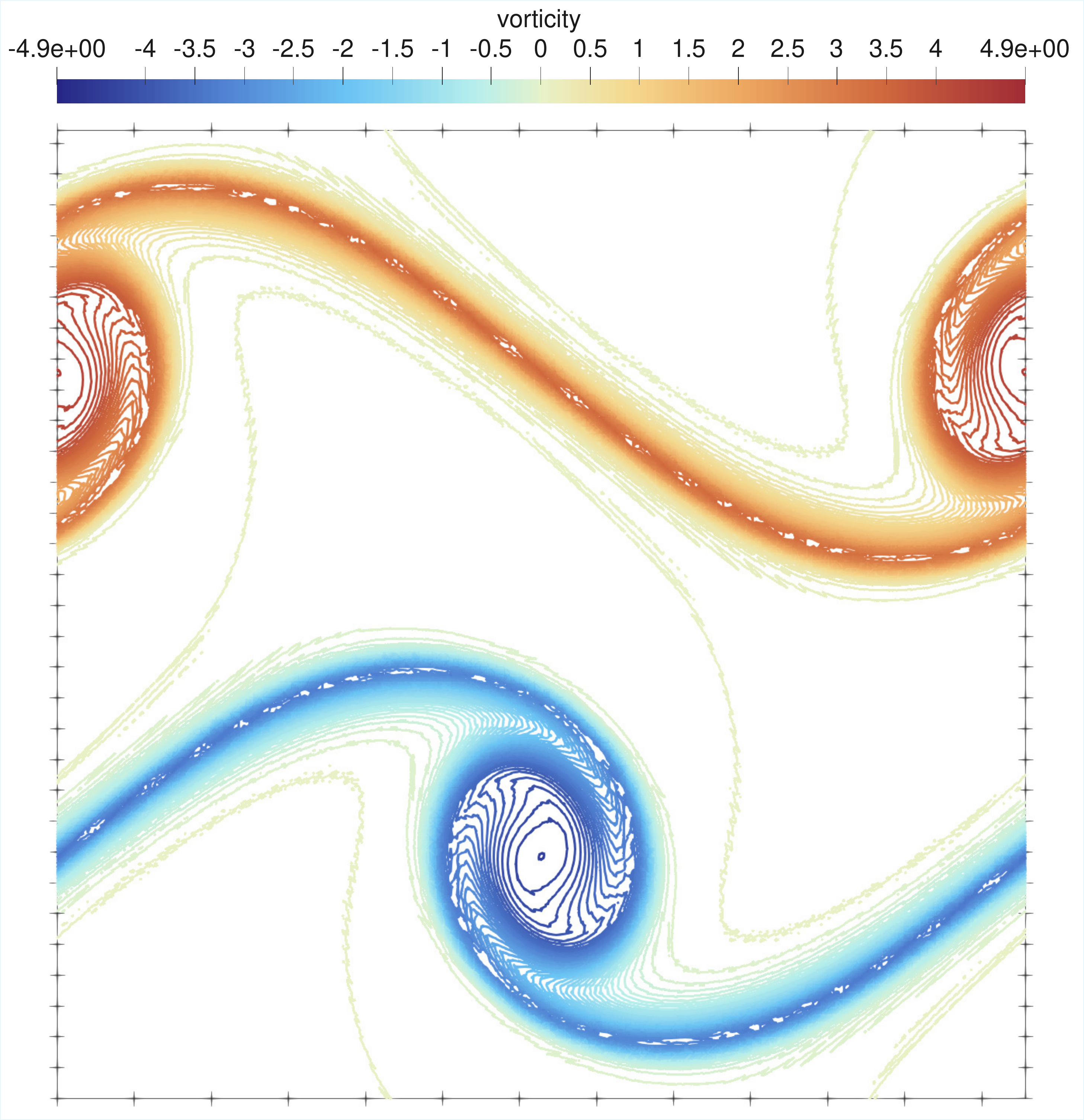}%
\includegraphics[width=0.24\textwidth]{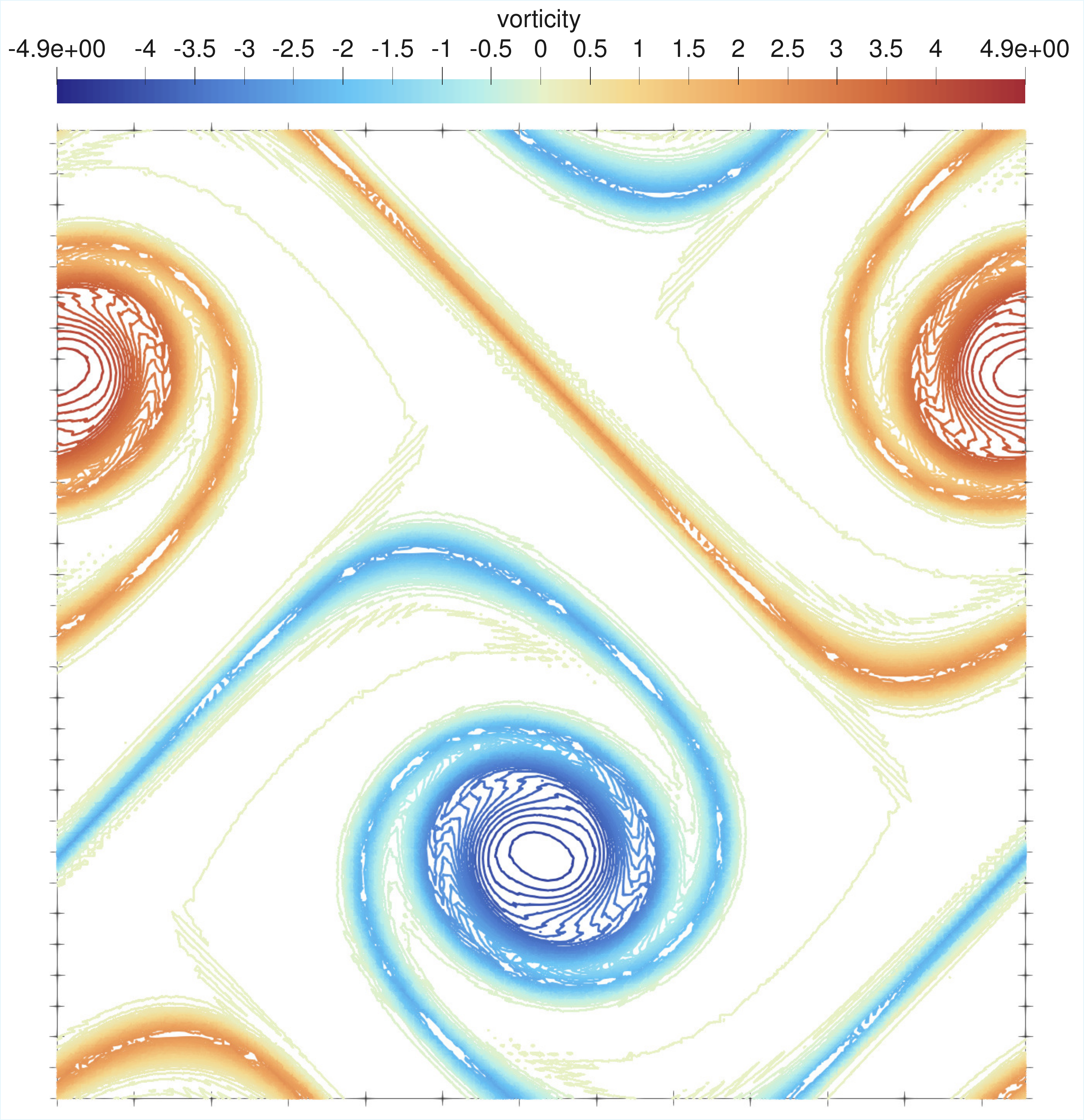}%
\includegraphics[width=0.24\textwidth]{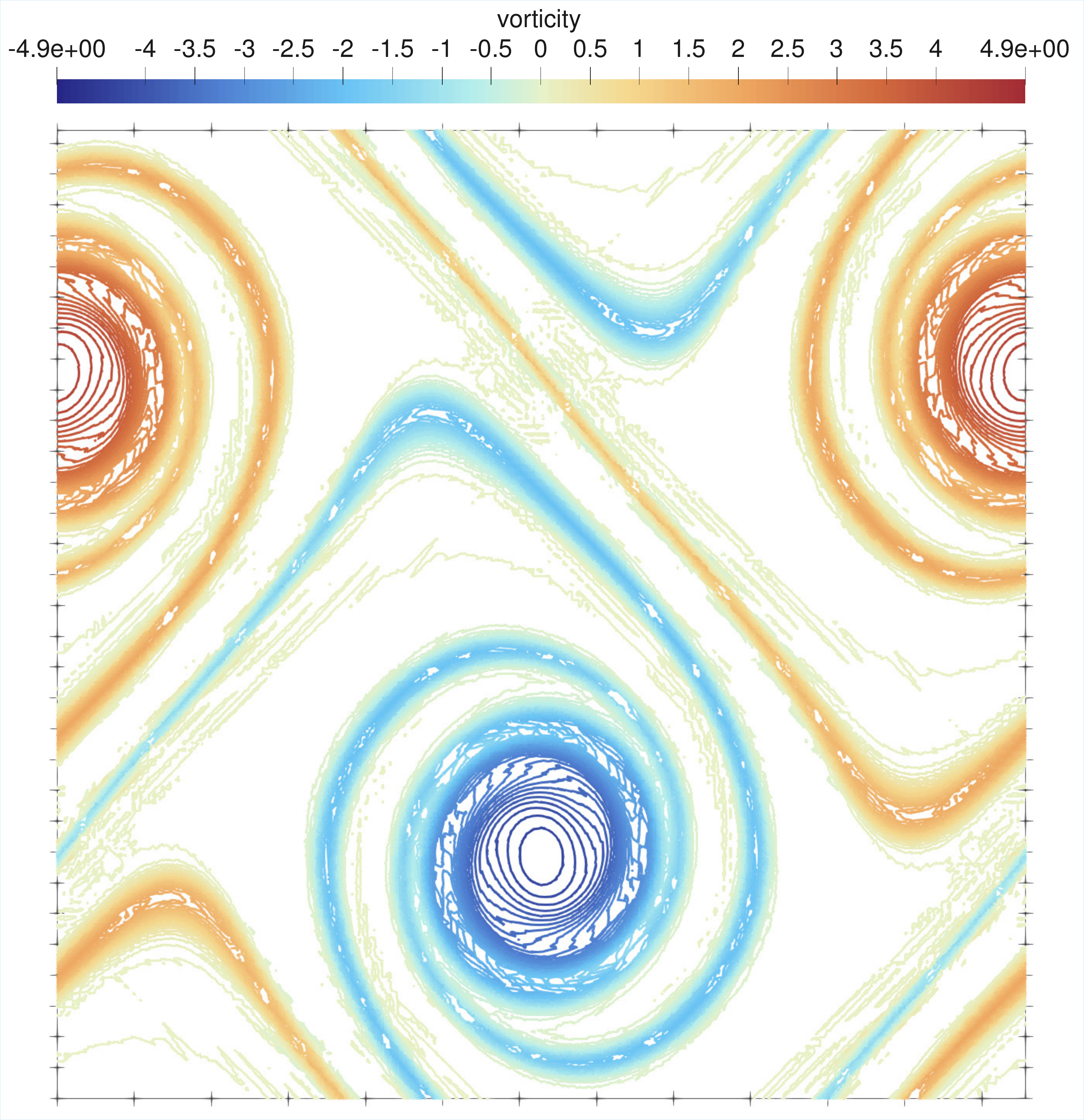}%
\includegraphics[width=0.24\textwidth]{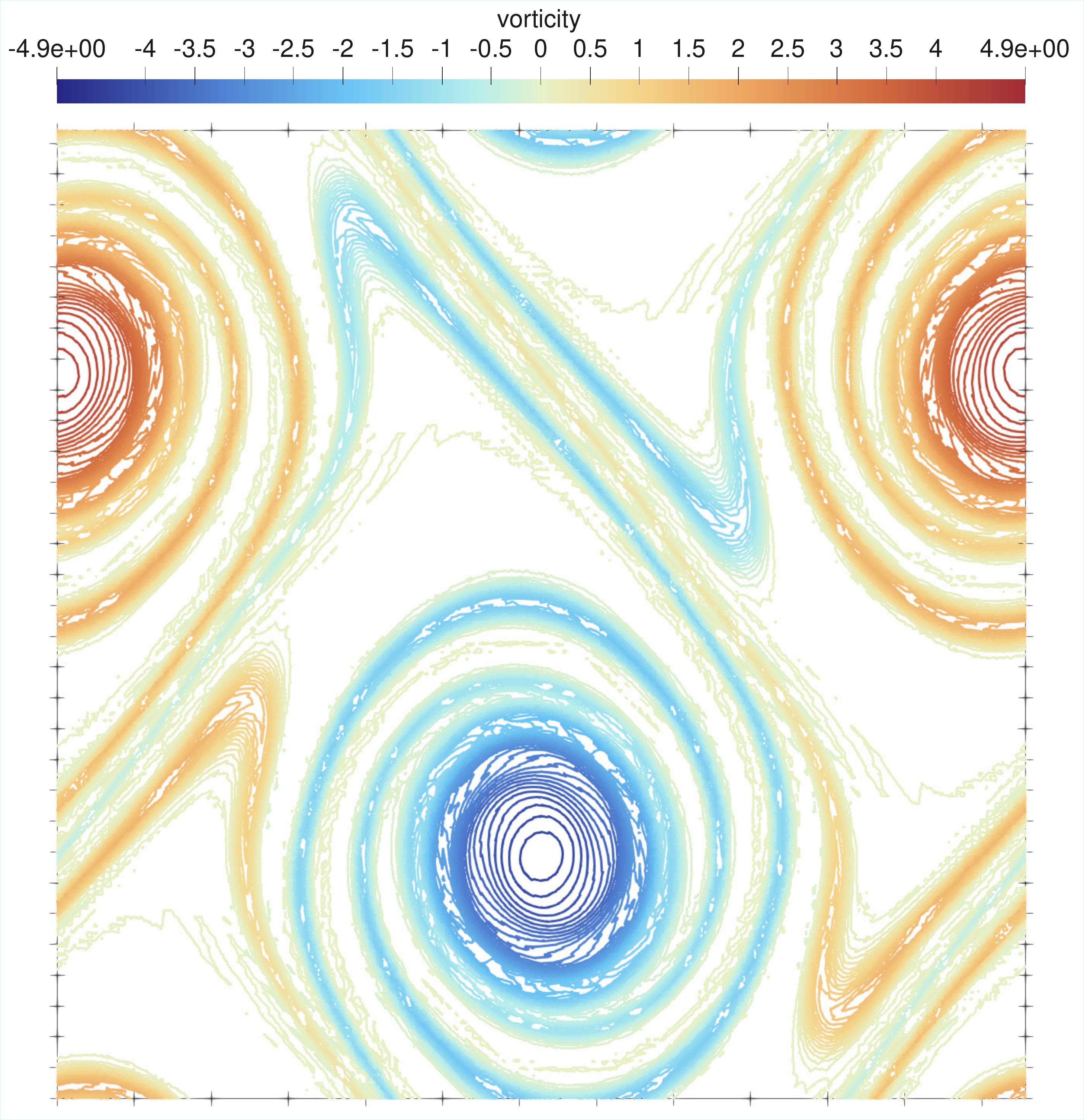}

\caption{\footnotesize Vorticity contours for the double shear layer problem with turbulent  viscosity ($\nu=0$ and $C_s=0.1$). First row: $N=64$; second row: $N=128$. Columns correspond to $T=6,8,10,12$.}
\label{fig:test3:2}
\end{figure}

\section{Conclusion}\label{Sec:conclusion}
We have developed and analyzed a fully discrete, globally divergence-free HDG method for the gradient-based Smagorinsky model. The method combines backward Euler time integration with interior-penalty discretizations of the molecular and nonlinear eddy-viscosity terms and an upwind convective flux. The pressure coupling yields an $H(\divop)$-conforming, pointwise divergence-free velocity, providing exact discrete mass conservation and pressure robustness with respect to irrotational force perturbations. Energy stability and existence hold without a time-step restriction for $\nu>0$ under sufficiently large penalty parameters, while uniqueness follows under separate sufficient smallness conditions.

The error analysis yields pressure-independent velocity bounds without explicit negative powers of the molecular viscosity $\nu$. Under $\delta=O(h)$, uniform regularity, and a time-step condition with a fixed positive margin, the resulting estimates provide mesh-uniform pre-asymptotic error bounds relevant to high-Reynolds-number regimes. Numerical experiments support the predicted velocity-error behavior and demonstrate machine-precision divergence residuals, the expected energy-dissipation properties, and the stabilizing effect of the Smagorinsky term in convection-dominated flows.

\end{document}